\numberwithin{equation}{section}
\declaretheorem[style=plain,numberlike=equation]{theorem}
\declaretheorem[style=plain,numberlike=theorem]{lemma}
\declaretheorem[style=plain,numberlike=theorem]{proposition}
\declaretheorem[style=plain,numberlike=theorem]{corollary}
\declaretheorem[style=remark,numberlike=theorem]{remark}
\declaretheorem[style=definition,numberlike=theorem]{definition}
\declaretheorem[style=definition,numberlike=theorem]{example}
\declaretheorem[name=Example,style=definition,numbered=no]{example*}
\newcounter{subequation}
\newenvironment{subequation}
  {\stepcounter{subequation}%
    \addtocounter{equation}{-1}%
    \equation}
  {\endequation}
\declaretheorem[style=definition,numberlike=subequation,name=Example]{subexample}
\declaretheorem[style=plain,numberlike=subequation,name=Claim]{subclaim}
\newcounter{thmlistcnt}
\newenvironment{thmlist}%
	{\setcounter{thmlistcnt}{0}%
	\begin{list}{\emph{(\roman{thmlistcnt})}}{%
		\usecounter{thmlistcnt}%
		\setlength{\topsep}{0pt}%
		\setlength{\leftmargin}{0pt}%
		\setlength{\itemsep}{0pt}%
		\setlength{\labelwidth}{17pt}
		\setlength{\itemindent}{30pt}}%
	}%
	{\end{list}}%
\newcounter{rmklistcnt}
\newenvironment{rmklist}%
	{\setcounter{rmklistcnt}{0}%
	\begin{list}{(\roman{rmklistcnt})}{%
		\usecounter{rmklistcnt}%
		\setlength{\topsep}{0pt}%
		\setlength{\leftmargin}{0pt}%
		\setlength{\itemsep}{0pt}%
		\setlength{\labelwidth}{17pt}
		\setlength{\itemindent}{30pt}}%
	}%
	{\end{list}}%
\NewDocumentCommand\set{s m}{%
    \IfBooleanTF#1%
    {\left\{ #2 \right\}}%
    {\{#2\}}%
}
\NewDocumentCommand\setbuild{s m m}{%
    \IfBooleanTF#1%
    {\ensuremath{\left\{ #2 \, : \, #3 \right\}}}
    {\ensuremath{\{ #2 \, : \, #3 \}}}%
}
\NewDocumentCommand\spangle{s m m m}{%
    \IfBooleanTF#1%
    {\ensuremath{\left\langle\, #2 \, \middle| \, #3 \,\right\rangle_{#4}}}%
    {\ensuremath{\langle\, #2 \, \mid \, #3 \,\rangle_{#4}}}%
}
\DeclarePairedDelimiter{\abs}{\lvert}{\rvert}
\newcommand{\Z}{\mathbb{Z}}
\newcommand{\C}{\mathbb{C}}
\newcommand{\N}{\mathbb{N}}
\newcommand{\iso}{\cong}
\newcommand{\tensor}{\otimes}
\newcommand{\crampedtensor}{{\tensor}}
\newcommand{\crampedwedge}{{\wedge}}
\DeclareMathOperator{\id}{id}
\DeclareMathOperator{\im}{im}
\DeclareMathOperator{\supp}{supp}
\newcommand{\blank}{{-}}
\DeclareMathOperator{\sign}{sgn}
\DeclareMathOperator{\sgn}{sgn} 
\newcommand{\dual}{\star}       
\newcommand{\condual}{\circ}    
\newcommand{\Y}[1]{[#1]}
\newcommand{\CSYT}{\mathrm{CSYT}}
\newcommand{\SSYT}{\mathrm{SSYT}}
\newcommand{\col}{\mathrm{col}}
\newcommand{\GR}{\mathrm{GR}} 
\newcommand{\GRspace}[2]{\GR^{#1}(#2)}
\newcommand{\Grel}{R}
\newcommand{\act}[1]{|#1|}   
\DeclareMathOperator{\Sym}{Sym}
\newcommand{\wwedge}{\mathchoice{{\textstyle\bigwedge}}%
    {{\bigwedge}}%
    {{\textstyle\wedge}}%
    {{\scriptstyle\wedge}}}
\DeclareMathOperator{\Wedge}{\wwedge}
\DeclareMathOperator{\Stab}{Stab}
\DeclareMathOperator{\Hom}{Hom}
\renewcommand*{\det}{\qopname\relax o{det}}
\renewcommand{\t}{\mathrm{t}}
\DeclareMathOperator{\GL}{GL}
\DeclareMathOperator{\SL}{SL}
\newcommand{\twobytwosmallmatrix}[4]{\ensuremath{%
\begin{psmallmatrix}
    #1 & #2 \\ #3 & #4 \\
\end{psmallmatrix} %
}}
\newcommand{\congSLC}{\cong} 
\newcommand{\Borel}{B}              
\renewcommand{\epsilon}{\varepsilon}
\renewcommand{\phi}{\varphi}
\renewcommand{\leq}{\leqslant}
\renewcommand{\geq}{\geqslant}
\renewcommand{\emptyset}{\varnothing}
\newlength{\truelen}
\newcommand{\padbox}[3][c]{%
    \settowidth{\truelen}{\ensuremath{#2}}%
    \ifdim\truelen < #3%
        \makebox[#3][#1]{\ensuremath{#2}}%
    \else%
        \ensuremath{#2}%
    \fi%
}
\newlength{\minlen}
\newcommand{\flexbox}[3][c]{%
    \settowidth{\minlen}{\ensuremath{#3}}%
    \padbox[#1]{#2}{\minlen}%
}
\newcommand{\longsubsum}[3]{%
    \flexbox{%
        \smashoperator[r]{#1_{#2}}\;#3%
    }{%
        {\scriptstyle#2}%
    }%
}
\newcommand{\mbf}[1]{\mathbf{#1}}
\newcommand{\symt}{\mathbf{s}} 
\newcommand{\polyt}{\mathbf{e}}
\newcommand{\CPP}{\mathrm{CPP}}
\newcommand{\ls}{k} 
\DeclareMathOperator{\Col}{col} 
\newcommand{\jc}{j}
\newcommand{\kc}{k}
\newcommand{\lambdacircprime}{\lambda^{\circ\raisebox{1pt}{\(\scriptscriptstyle\prime\)}}}
\newcommand{\ppa}{\cdot}        
\newcommand{\surplus}{\mathrm{S}}
\newcommand{\surplusp}{\mathrm{s}}
\newcommand{\sym}{\mathrm{sym}}
\newcommand{\ix}{\mbf{i}}
\newcommand{\jx}{\mbf{j}}
\newcommand{\dx}{\mbf{d}}
\newcommand{\mx}{\ix_\mathrm{max}}
\newcommand{\repx}[1]{\mbf{c}^{(#1)}}
\newcommand{\unitx}[1]{\mbf{k}^{(#1)}}
\newcommand{\Mdecreasing}{\mathcal{M}_{\geq}}
\renewcommand{\sl}{\mathsf{sl}}
\newcommand{\svar}{r} 
\newcommand{\Tot}{\mathrm{T}}
\NewDocumentCommand\Ftensor{s m}{%
    \IfBooleanTF#1%
    {F_\tensor\bigl( #2 \bigr)}%
    {F_\tensor(#2)}%
}
\NewDocumentCommand\Fsym{s m}{%
    \IfBooleanTF#1%
    {F_\sym\bigl( #2 \bigr)}%
    {F_\sym(#2)}%
}
\NewDocumentCommand\Fwedge{s m}{%
    \IfBooleanTF#1%
    {F_\wedge\bigl( #2 \bigr)}%
    {F_\wedge(#2)}%
}
\newcommand{\maxt}{\mathrm{max}}
\newcommand{\tmax}{t_\mathrm{max}}
\newcommand{\eps}{\epsilon}
\newcommand{\betass}{{\scriptscriptstyle \beta}}
\newcommand{\betaxss}{
    {\raisebox{-2pt}{\scalebox{0.9}{\(\scriptscriptstyle \beta\)}}}
}
\newcommand{\defect}{\mathcal{D}}   
\newcommand{\cfreesummand}{\trianglelefteqslant}    
\newcommand{\sfrac}[2]{{\scriptstyle\frac{#1}{#2}}}
\DeclareMathSymbol{\lsb@l}{\mathalpha}{letters}{`l}
\newcommand{\bs}{\hskip-0.5pt}
\newcommand{\KSLK}{K\bs\SL_2(K)}
\newcommand{\xlen}{0.95}
\newcommand{\ylen}{0.95}
\newcommand{\hgap}{0.25}    
\newcommand{\vgap}{0.42}    
\newcommand{\ABshift}{0.07,1-0.21} 
\newcommand{\DNshift}{0.96,1-0.12} 
\newcommand{\drawthreesides}[2]{\draw (#1,#2+1)--++(0,-1)--++(1,0)--++(0,1);} 
\newcommand{\drawfoursideswstyle}[3]{\draw[#3] (#1,#2+1)--++(0,-1)--++(1,0)--++(0,1)--cycle;}  
\newcommand{\drawfoursides}[2]{\drawfoursideswstyle{#1}{#2}{}} 
\newcommand{\sstyle}{\scriptstyle}
\newcommand{\fb}{0} 
\newcommand{\ABstyle}[1]{\(\mathrlap{\sstyle \scaleobj{0.95}{\bm{#1}}}\)} 
\newcommand{\DNstyle}[1]{\(\mathllap{\smash{\sstyle \scaleobj{0.85}{\bm{#1}}}}\)} 
\definecolor{darkazure}{HTML}{0059b3}
\definecolor{azure}{HTML}{007fff}
\definecolor{paleazure}{HTML}{a6d2ff}
\definecolor{vdarkgold}{HTML}{807100}
\definecolor{darkgold}{HTML}{f2d400}
\definecolor{gold}{HTML}{ffdf00}
\definecolor{palegold}{HTML}{fff7bf}
\definecolor{myred}{HTML}{f03f02}
\definecolor{palered}{HTML}{f68c67}
\definecolor{mygrey}{HTML}{e6e6e6}
\definecolor{darkgrey}{HTML}{bbbbbb}
\begin{document}

\title[Modular plethystic isomorphisms]{Modular plethystic isomorphisms for two-dimensional linear groups}

\author{Eoghan McDowell and Mark Wildon}
\email{eoghan.mcdowell@oist.jp}
\email{mark.wildon@rhul.ac.uk}

\date{\today \newline\hspace*{9pt} \emph{Affiliation}: Royal Holloway, University of London}

\makeatletter
\@namedef{subjclassname@2020}{\textup{2020} Mathematics Subject Classification}
\makeatother
\subjclass[2020]{Primary: 20C20, Secondary: 05E05, 05E10, 17B10, 22E46, 22E47}

\maketitle
\thispagestyle{empty}

\begin{abstract}
Let $E$ be the natural representation of the special linear group $\mathrm{SL}_2(K)$ over an arbitrary field $K$. We use the two dual constructions of the symmetric power when $K$ has prime characteristic to construct an explicit isomorphism $\mathrm{Sym}_m \mathrm{Sym}^\ell E \cong \mathrm{Sym}_\ell \mathrm{Sym}^m E$. This generalises Hermite reciprocity to arbitrary fields. We prove a similar explicit generalisation of the classical Wronskian isomorphism, namely $\mathrm{Sym}_m \mathrm{Sym}^\ell E \cong \bigwedge^m \mathrm{Sym}^{\ell+m-1} E$. We also generalise a result first proved by King, by showing that if $\nabla^\lambda$ is the Schur functor for the partition $\lambda$ and $\lambda^\circ$ is the complement of $\lambda$ in a rectangle with $\ell+1$ rows, then $\nabla^\lambda \mathrm{Sym}^\ell E \cong \nabla^{\lambda^\circ} \mathrm{Sym}_\ell E$. To illustrate that the existence of such `plethystic isomorphisms' is far from obvious, we end by proving that the generalisation $\nabla^\lambda \mathrm{Sym}^\ell E \cong \nabla^{\lambda'} \mathrm{Sym}^{\ell + \ell(\lambda') - \ell(\lambda)}E$ of the Wronskian isomorphism, known to hold for a large class of partitions over the complex field, does not generalise  to fields of prime characteristic, even after considering all possible dualities.
\end{abstract}

\section{Introduction}
Let \(E\) be the natural \(2\)-dimensional
representation of the special linear group \(\SL_2(\C)\) of \(2 \times 2\) complex matrices
with determinant \(1\). The classical Hermite reciprocity law, discovered
by Cayley, Hermite and Sylvester in the setting of invariant theory,
states that
\[
    \Sym^m \Sym^\ell\! E \congSLC \Sym^\ell \Sym^m\! E
\]
for all \(m\), \(\ell \in \N\) (see \cite[Exercise~6.18]{FultonHarrisReps}). 
A related classical 
result is the Wronskian isomorphism 
\[
    \Sym^m \Sym^\ell\! E \congSLC \Wedge^m \Sym^{\ell+m-1} \! E
\]
again for \(m\), \(\ell \in \N\) (see for instance \cite[\S 2.4]{AC}).
More recently King \cite[\S 4.2]{KingSU2Plethysms} used the character theory of \(\mathrm{SU}_2\) to prove that, if
\(\lambda\) is a partition,~\(\nabla^\lambda\) is the corresponding Schur functor (defined
in \S\ref{subsec:Schur} below), and \(\lambda^{\circ}\) is the
complement of \(\lambda\) in a rectangle with \(\ell+1\) rows, then
\[ \nabla^\lambda \Sym^\ell \! E \congSLC \nabla^{\lambda^\circ} \Sym^\ell \! E.\]
In this paper we construct explicit isomorphisms showing that
\emph{provided suitable dualities are introduced}
each of these results holds
when \(\C\) is replaced with an arbitrary
field. 
To illustrate that the existence of such `plethystic isomorphisms' is far
from obvious, we end
by proving that the generalization
\(\nabla^\lambda \Sym^\ell \! E \congSLC \nabla^{\lambda'} \Sym^{\ell + \ell(\lambda') - \ell(\lambda)}E\)
of the Wronskian isomorphism, shown to hold for a large class of partitions by King in \cite[\S 4.2]{KingSU2Plethysms}, \emph{does not} have a modular analogue, even after considering 
all possible dualities.

To state our main results it is essential to distinguish between the two dual constructions
of the symmetric power. Let \(K\) be a field. 
Given a \(K\)-vector space~\(V\) and \(r \in \N\), the symmetric
group \(S_r\) acts on \(V^{\otimes r}\) on the right by linear extension of the place permutation
action \((v_1 \otimes \cdots \otimes v_r) \cdot \sigma
= v_{1\sigma^{-1}} \otimes \cdots \otimes v_{r\sigma^{-1}}\). Let \(\Sym_r V = (V^{\otimes r})^{S_r}\) be the invariants for this action, and let \(\Sym^r\bs V\) be the coinvariants; that is,
\begin{equation}
\label{eq:symUpper}
\Sym^r\bs V 
    = \frac{
        V^{\otimes r} 
    }{
        \spangle{
            w \cdot \sigma - w
        }{
            w \in V^{\otimes r}, \sigma \in S_r
        }{K}
    }
\end{equation}
is the  symmetric power as usually defined.
Let \(\bigwedge^r V\) be the  exterior power, defined by quotienting \(V^{\tensor r}\) by the submodule generated by the fixed points of transpositions in \(S_r\) (here there is no need to consider a dual construction: see \Cref{prop:SchurContravariantDual} and the comments that follow).
Write \(\det V \iso \Wedge^{\dim V} V\) for the \(1\)-dimensional determinant representation corresponding to \(V\); write \(V^\dual\) for the dual of \(V\) (see \S\ref{subsec:duality}).

\subsubsection*{Complementary partition isomorphism}

Our first main result gives an isomorphism for representations of an arbitrary group.

\begin{restatable}[Complementary partition isomorphism]{theorem}
{comppartiso}
\label{thm:comp_partition_iso}
Let \(G\) be a group and let \(V\) be a \(d\)-dimensional representation of \(G\) over an arbitrary field.
Let \(s \in \N\), and let \(\lambda\) be a partition with \(\ell(\lambda) \leq d\) and
first part at most \(s\).
Let \(\lambda^\circ\) denote the complement of~\(\lambda\) in the \(d \times s\) rectangle.
Then there is an isomorphism
\[
    \nabla^\lambda V \iso \nabla^{\lambda^\circ} V^\dual \tensor (\det V)^{\otimes s}.
\]
\end{restatable}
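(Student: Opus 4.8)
The plan is to exploit functoriality to reduce to a universal statement, and then to build the isomorphism ``one column at a time'' from the familiar self\nobreakdash-duality $\bigwedge^{d-a}(V^\dual) \iso \bigwedge^a\! V \tensor (\det V)^\dual$ of exterior powers. A representation of $G$ on $V$ is the same data as a homomorphism $\rho\colon G \to \GL(V)$, and $\nabla^\lambda$, $\nabla^{\lambda^\circ}$, $(\blank)^\dual$ and $\det$ are all functorial, so any isomorphism natural in the $d$-dimensional $K$-vector space $V$ automatically intertwines the $\GL(V)$-actions (apply naturality to each $g \in \GL(V)$, viewed as an automorphism of $V$) and hence, composing with $\rho$, the $G$-actions. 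It therefore suffices to produce a natural isomorphism $\nabla^\lambda V \iso \nabla^{\lambda^\circ} V^\dual \tensor (\det V)^{\tensor s}$ on the category of $d$-dimensional $K$-vector spaces. For this I would pass to a column realisation of the Schur functor: padding $\lambda$ with empty columns so that it has exactly $s$ columns of lengths $\lambda'_1 \geq \dots \geq \lambda'_s \geq 0$, the space $\nabla^\lambda V$ is built naturally from the column tensor space
\[
    T^\lambda V \;=\; \textstyle\bigwedge^{\lambda'_1}\! V \;\tensor\; \dots \;\tensor\; \bigwedge^{\lambda'_s}\! V
\]
by imposing the Garnir straightening relations that link adjacent columns.

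The next step is the ambient isomorphism. A short computation with the rectangle gives $(\lambda^\circ)'_j = d - \lambda'_{s+1-j}$, and the hypotheses $\ell(\lambda) \leq d$ and $\lambda_1 \leq s$ are exactly what ensure that each of these lengths lies in $\{0, \dots, d\}$ and that $\lambda^\circ$ is a genuine partition. For each $a \in \{0, \dots, d\}$ the perfect pairing $\bigwedge^a\! V \tensor \bigwedge^{d-a}\! V \to \bigwedge^d\! V = \det V$ yields a natural isomorphism $\bigwedge^{d-a}(V^\dual) \iso \bigwedge^a\! V \tensor (\det V)^\dual$, under which a decomposable wedge $e^\dual_{j_1} \wedge \dots \wedge e^\dual_{j_{d-a}}$ corresponds, up to an explicit sign, to $(e_{i_1} \wedge \dots \wedge e_{i_a}) \tensor (e_1 \wedge \dots \wedge e_d)^\dual$, where $\{i_1, \dots, i_a\}$ is the complement of $\{j_1, \dots, j_{d-a}\}$ in $\{1, \dots, d\}$. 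Applying this in each of the $s$ tensor factors, reversing the order of the factors via $j \leftrightarrow s+1-j$, and twisting by $(\det V)^{\tensor s}$ assembles to a natural isomorphism
\[
    \Phi_V \colon T^\lambda V \;\xrightarrow{\,\sim\,}\; T^{\lambda^\circ}(V^\dual) \tensor (\det V)^{\tensor s}
\]
of the two ambient column tensor spaces, whose combinatorial shadow on decomposable wedges is: complement each column entry in $\{1, \dots, d\}$, then reverse the order of the columns.

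It then remains to check that $\Phi_V$ respects the defining relations, identifying the Garnir relations presenting $\nabla^\lambda V$ with those presenting $\nabla^{\lambda^\circ} V^\dual \tensor (\det V)^{\tensor s}$, so that it induces the asserted isomorphism. This comes down to the combinatorial lemma that complement-and-reverse interchanges the Garnir relations for $\lambda$ with those for $\lambda^\circ$: a straightening relation coupling columns $j$ and $j{+}1$ of $\lambda$, of lengths $\lambda'_j \geq \lambda'_{j+1}$, is sent to one coupling columns $s{-}j$ and $s{-}j{+}1$ of $\lambda^\circ$, of the complementary lengths $d - \lambda'_{j+1} \geq d - \lambda'_j$; this can be verified locally on each adjacent pair of columns. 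I expect this matching --- not the construction of $\Phi_V$, which is formal --- to be the main obstacle, for two reasons: one needs a presentation of $\nabla^\lambda V$ from $T^\lambda V$ symmetric enough that complementation is visibly an involution on the relation spaces, and one must track the signs contributed by the $s$ wedge pairings together with the reversal of the $s$ tensor factors, checking that they leave the relation space invariant rather than merely permuting its generators up to a global scalar. If the direct verification proves unwieldy, a fallback is to show only that the induced map is surjective and then conclude from equality of dimensions: since the whole construction is defined over $\Z$, both dimensions are given by the Weyl character formula independently of $\charac K$. Exhibiting $\Phi_V$ explicitly is, however, more in keeping with the rest of the paper.
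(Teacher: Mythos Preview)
Your proposal is correct and follows essentially the same strategy as the paper: build the ambient column-wise wedge-duality isomorphism $\Psi$ on $\bigwedge^{\lambda'}\! V$, then show it carries Garnir relations for $\lambda$ into Garnir relations for $\lambda^\circ$, and conclude. The paper in fact takes exactly your ``fallback'' route---proving only the inclusion $\Psi(\GR^\lambda(V)) \subseteq \GR^{\lambda^\circ}(V^\dual)$ and finishing by dimension count---and the bulk of \S\ref{sec:complementary_partition_iso} is the sign-tracking you anticipate, showing that $\Grel_{(t,A,B)}$ maps to $\pm\Grel_{(t^\circ, A^\circ \sqcup N_k, B^\circ)}$.
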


Our map, described in \Cref{defn:Psi}, is explicit, and sends a canonical basis element labelled by a tableau to a canonical basis element labelled by a `complementary' tableau.

Two interesting special cases of this theorem are that
\(\bigwedge^s V \cong \bigwedge^{d-s} V^\dual\) and
\(\nabla^{(d,d-1,\ldots,1)} V \cong \nabla^{(d,d-1,\ldots, 1)}V^\dual\) 
whenever  \(\det V\) is trivial.
This assumption on the determinant is not very restrictive:
for instance it holds whenever  \(V\) is 
obtained by restricting a polynomial representation of \(\GL_2(K)\) to a subgroup
of \(\SL_2(K)\).
For example we obtain (\Cref{cor:exteriorSymmetric}) an explicit isomorphism \(\bigwedge^\ell \Sym^{\ell+m-1}\bs E \cong \bigwedge^m \Sym_{\ell+m-1}\bs E\), where \(E\) is the natural representation of \(\SL_2(K)\).
More generally, we obtain from \Cref{thm:comp_partition_iso} the following plethystic isomorphism.

\begin{restatable}{corollary}
{comppartisoSL}
\label{cor:complement_iso_for_SL_2}
Let \(\ell, s \in \N_0\), and let \(\lambda\) be a partition with \(\ell(\lambda) \leq l+1\) and first part at most \(s\).
Let \(\lambda^\circ\) denote the complement of~\(\lambda\) in the \(\hbox{$(l+1)$} \times s\) rectangle.
Let \(K\) be a field and let \(E\) be the natural \(2\)-dimensional representation of \(\SL_2(K)\).
Then there is an isomorphism
\[
    \nabla^\lambda \Sym^{l}\! E \iso \nabla^{\lambda^\circ}\! \Sym_{l}\! E.
\]
\end{restatable}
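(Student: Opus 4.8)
The plan is to obtain this corollary by specialising \Cref{thm:comp_partition_iso} to the group $\SL_2(K)$ and the representation $V = \Sym^{\ell}\! E$. This $V$ has dimension $d = \ell + 1$, so the hypotheses of \Cref{thm:comp_partition_iso} hold precisely: $\ell(\lambda) \leq \ell + 1 = d$, the first part of $\lambda$ is at most $s$, and $\lambda^\circ$ is the complement of $\lambda$ in the $(\ell+1) \times s = d \times s$ rectangle. The theorem then gives
\[
    \nabla^{\lambda} \Sym^{\ell}\! E \;\iso\; \nabla^{\lambda^\circ}\bigl( (\Sym^{\ell}\! E)^{\dual} \bigr) \tensor \bigl( \det \Sym^{\ell}\! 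E \bigr)^{\tensor s},
\]
and it remains only to recognise the two representations on the right-hand side.

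First I would show $(\Sym^{\ell}\! E)^{\dual} \iso \Sym_{\ell}\! E$ as representations of $\SL_2(K)$. Dualising a finite-dimensional representation is exact, commutes with tensor powers, and interchanges invariants with coinvariants, so $(\Sym^{r} W)^{\dual} \iso \Sym_{r}(W^{\dual})$ for any finite-dimensional representation $W$ (compare \S\ref{subsec:duality}); with $W = E$ this gives $(\Sym^{\ell}\! E)^{\dual} \iso \Sym_{\ell}(E^{\dual})$. Since $E$ is two-dimensional, $E^{\dual} \iso E \tensor (\det E)^{\dual}$, which for $\SL_2(K)$ reduces to $E$ (the $\SL_2(K)$-invariant symplectic form on $E$ supplies an explicit such isomorphism over any field). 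Hence $(\Sym^{\ell}\! E)^{\dual} \iso \Sym_{\ell}\! E$, and since the Schur functor $\nabla^{\lambda^\circ}$ is functorial (\S\ref{subsec:Schur}) we get $\nabla^{\lambda^\circ}\bigl( (\Sym^{\ell}\! E)^{\dual} \bigr) \iso \nabla^{\lambda^\circ} \Sym_{\ell}\! E$.

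Next I would observe that the one-dimensional representation $\det \Sym^{\ell}\! E$ is trivial, so the factor $\bigl( \det \Sym^{\ell}\! E \bigr)^{\tensor s}$ may be deleted. Indeed $\det \Sym^{\ell}\! W$ is a power of $\det W$ for two-dimensional $W$ — in fact $\det \Sym^{\ell}\! W \iso (\det W)^{\tensor \binom{\ell+1}{2}}$, which may be verified via the corresponding polynomial identity in the matrix entries (checked in characteristic zero) or directly from the weight count $\ell + (\ell-2) + \cdots + (-\ell) = 0$ for a split torus — and $\det E$ is the trivial $\SL_2(K)$-representation. Combining this with the previous paragraph yields the asserted isomorphism; the degenerate case $s = 0$, which forces $\lambda = \emptyset$, is immediate.

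I do not anticipate a serious obstacle: the statement is a corollary and the work is essentially bookkeeping. The one step calling for care is the first, namely keeping the two dual constructions $\Sym^{\ell}$ and $\Sym_{\ell}$ apart and checking that dualisation really does convert the former into the latter, since this is exactly the source of the asymmetry between the two sides of the claimed isomorphism.
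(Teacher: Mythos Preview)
Your proposal is correct and follows essentially the same route as the paper: specialise \Cref{thm:comp_partition_iso} with $G=\SL_2(K)$, $V=\Sym^{\ell}\!E$, $d=\ell+1$, then use $(\Sym^{\ell}\!E)^\dual\cong\Sym_\ell(E^\dual)\cong\Sym_\ell E$ (the paper packages this as the discussion after \Cref{prop:SchurContravariantDual} together with \Cref{lemma:SL2duality}) and the triviality of $\det\Sym^{\ell}\!E$ on $\SL_2(K)$. The only difference is cosmetic: for the determinant the paper invokes the fact that $V$ is the restriction of a polynomial $\GL_2(K)$-representation, whereas you compute $\det\Sym^{\ell}\!W\cong(\det W)^{\otimes\binom{\ell+1}{2}}$ directly.
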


\subsubsection*{Wronskian isomorphism}
Our second main theorem is an explicitly defined Wronskian isomorphism that again holds in arbitrary characteristic.
Let \(\set{X, Y}\) be the canonical basis for the natural representation \(E\).

\begin{restatable}[Modular Wronskian isomorphism]{theorem}
{Wronskian}
\label{thm:Wronskian}
Let \(m\), \(\ell \in  \N\). 
Let \(K\) be a field and let \(E\) be the natural \(2\)-dimensional representation of \(\GL_2(K)\).
There is an isomorphism of \(\GL_2(K)\)-representations
\[
\Sym_m \Sym^\ell \! E \otimes (\det E)^{ \otimes m(m-1)/2}
    \cong \Wedge^m \Sym^{\ell + m - 1}\!E
\]
given by restriction of the \(K\)-linear map $(\Sym^\ell E)^{\otimes m} \rightarrow \bigwedge^m \Sym^{\ell+m-1} E$ 
defined on the canonical basis of $(\Sym^\ell E)^{\otimes m}$ by
\[
    \bigotimes_{j=1}^m X^{i_j} Y^{l-i_j} \,\mapsto\, \bigwedge_{j=1}^m X^{i_j+m-j} Y^{l-i_j+j-1}.
\]
\end{restatable}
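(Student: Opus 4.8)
The plan is to show that the displayed $K$-linear map $\phi\colon(\Sym^\ell E)^{\tensor m}\to\Wedge^m\Sym^{\ell+m-1}E$ restricts, on the invariant submodule $\Sym_m\Sym^\ell E=\bigl((\Sym^\ell E)^{\tensor m}\bigr)^{S_m}$, to a bijection which becomes $\GL_2(K)$-equivariant once the source is twisted by $(\det E)^{\tensor m(m-1)/2}$; this is precisely the claimed isomorphism, and the map is well defined because it is prescribed on a basis. Index partitions $\lambda=(\lambda_1\geq\dots\geq\lambda_m\geq 0)$ with $\lambda_1\leq\ell$. To such $\lambda$ attach the orbit sum $o_\lambda=\sum\bigotimes_{j=1}^m X^{c_j}Y^{\ell-c_j}$, summed over the distinct rearrangements $(c_1,\dots,c_m)$ of $(\lambda_1,\dots,\lambda_m)$; the $o_\lambda$ are the canonical integral basis of $\Sym_m\Sym^\ell E$. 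On the other side the elements $f_\lambda=\bigwedge_{j=1}^m X^{\lambda_j+m-j}Y^{\ell-\lambda_j+j-1}$ are the canonical basis of $\Wedge^m\Sym^{\ell+m-1}E$, since $\lambda\mapsto(\lambda_j+m-j)_j$ is a bijection from these partitions onto the strictly decreasing sequences in $\{0,\dots,\ell+m-1\}$. In particular $\dim\Sym_m\Sym^\ell E=\binom{\ell+m}{m}=\dim\Wedge^m\Sym^{\ell+m-1}E$, so it will suffice to prove equivariance up to the stated twist together with injectivity of $\phi$ on $\Sym_m\Sym^\ell E$.

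For equivariance I would factor $\phi$. Let $\mu\colon(\Sym^\ell E)^{\tensor m}\tensor(\Sym^{m-1}E)^{\tensor m}\to(\Sym^{\ell+m-1}E)^{\tensor m}$ multiply tensor factors in corresponding pairs, via the multiplication $\Sym^\ell E\tensor\Sym^{m-1}E\to\Sym^{\ell+m-1}E$ of the symmetric algebra of $E$, and let $\kappa\colon(\Sym^{\ell+m-1}E)^{\tensor m}\onto\Wedge^m\Sym^{\ell+m-1}E$ be the canonical quotient. Both are $\GL_2(K)$-equivariant, and $\kappa\circ\mu$ is $S_m$-equivariant for the diagonal place-permutation action on its source; moreover, setting $\widetilde v=\bigotimes_{j=1}^m X^{m-j}Y^{j-1}$, one checks on a basis that $\phi(w)=\kappa\bigl(\mu(w\tensor\widetilde v)\bigr)$ for every $w\in(\Sym^\ell E)^{\tensor m}$. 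A direct term-by-term cancellation argument — matching, in $\mu(w\tensor u)$, the summand indexed by a given tensor index with the one obtained by swapping the two slots on which $u$ is transposition-symmetric — shows $\kappa\bigl(\mu(w\tensor u)\bigr)=0$ whenever $w\in\Sym_m\Sym^\ell E$ and $u\in(\Sym^{m-1}E)^{\tensor m}$ is fixed by a transposition. Hence $w\tensor\blank$ descends to a $\GL_2(K)$-equivariant map $\Sym_m\Sym^\ell E\tensor\Wedge^m\Sym^{m-1}E\to\Wedge^m\Sym^{\ell+m-1}E$. Finally $\Wedge^m\Sym^{m-1}E$ is one-dimensional, and a weight computation identifies it with $(\det E)^{\tensor m(m-1)/2}$ via an isomorphism carrying $(X\wedge Y)^{\tensor m(m-1)/2}$ to the generator $\bigwedge_{j=1}^m X^{m-j}Y^{j-1}$; evaluating the descended map at this generator returns exactly $\phi|_{\Sym_m\Sym^\ell E}$ (compare both sides on each $o_\lambda$), so $\phi|_{\Sym_m\Sym^\ell E}$ is equivariant up to the stated twist.

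For injectivity I would prove that $\phi$ is unitriangular on $\Sym_m\Sym^\ell E$ for the dominance order, i.e.\ $\phi(o_\lambda)=f_\lambda+\sum_{\nu\vartriangleleft\lambda}c_{\lambda\nu}f_\nu$ with $c_{\lambda\nu}\in K$. Applying $\phi$ term by term, $\phi(o_\lambda)=\sum_{(c_1,\dots,c_m)}\bigwedge_{j=1}^m X^{c_j+m-j}Y^{\ell-c_j+j-1}$ over the distinct rearrangements of $\lambda$; such a summand vanishes unless the exponents $c_j+m-j$ are pairwise distinct, in which case reordering the wedge by decreasing exponent makes it $\pm f_\nu$, where $(\nu_j+m-j)_j$ is the decreasing rearrangement of $(c_j+m-j)_j$. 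The combinatorial heart is the straightening estimate $\nu\trianglelefteq\lambda$: the sum of the $k$ largest of the numbers $c_j+m-j$ is at most $\sum_{j\leq k}\lambda_j+\sum_{j\leq k}(m-j)=\sum_{j\leq k}(\lambda_j+m-j)$, since $(\lambda_j)_j$ and $(m-j)_j$ are each already decreasing. Since $\nu$ and $\lambda$ have equal size, equality for all $k$ forces the multisets $\{c_j+m-j\}$ and $\{\lambda_j+m-j\}$ to agree, and a short induction comparing the largest elements of successive tails then forces $(c_1,\dots,c_m)=(\lambda_1,\dots,\lambda_m)$; that single rearrangement contributes $f_\lambda$ with coefficient exactly $1$. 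This establishes unitriangularity, hence bijectivity of $\phi|_{\Sym_m\Sym^\ell E}$, which together with the previous paragraph gives the asserted $\GL_2(K)$-isomorphism.

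The step I expect to be the main obstacle is this last one, and in particular the assertion that the leading coefficient is \emph{exactly} $1$: replacing $o_\lambda$ by an averaged symmetrisation would make it $\abs{\Stab_{S_m}(\lambda)}$, which may vanish modulo the characteristic, so the integral orbit-sum basis must be used throughout and the straightening bookkeeping done with care. A lesser point requiring attention is the characteristic-free vanishing of $\kappa\circ\mu$ on $w\tensor u$ for symmetric $w$ and transposition-fixed $u$: the naive argument only yields $2\kappa(\mu(w\tensor u))=0$, so one must instead exhibit the honest term-by-term cancellations described above.
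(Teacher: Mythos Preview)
Your proof is correct, and the equivariance step takes a genuinely different and more conceptual route than the paper.

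For equivariance, the paper reduces to $\SL_2(K)$, checks the action of $J=\left(\begin{smallmatrix}0&1\\-1&0\end{smallmatrix}\right)$ directly, and for the unipotent generators $M_\gamma$ argues that the relevant matrix identity has integer coefficients, so it suffices to verify it over $\C$; there it passes to the Lie algebra $\mathfrak{sl}_2(\C)$ and checks commutation with the single element $f$. Your factorisation $\phi(w)=\kappa\bigl(\mu(w\tensor\widetilde v)\bigr)$ together with the characteristic-free cancellation showing that $\kappa\circ\mu(w\tensor\blank)$ descends to $\Wedge^m\Sym^{m-1}E\cong(\det E)^{\tensor m(m-1)/2}$ is more intrinsic: it never leaves the given field, avoids generators-and-relations for $\SL_2$, and explains the determinant twist structurally as the top exterior power of $\Sym^{m-1}E$. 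The cost is the combinatorial cancellation lemma you flag; your pairing $c\leftrightarrow c\tau$ handles it cleanly (when $c=c\tau$ the resulting tensor is a fixed point of $\tau$, hence lies in the span of pure tensors with a repeated factor and dies under $\kappa$, even in characteristic $2$).

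For injectivity the two arguments are close cousins: the paper uses a lexicographic leading-term argument on the same orbit-sum basis, while you use dominance. Your observation that the sum of the $k$ largest of the $c_j+m-j$ is bounded by $\sum_{j\leq k}(\lambda_j+m-j)$ is exactly the right estimate, and the induction on the maximum correctly pins down the unique rearrangement contributing to $f_\lambda$; the paper's lex argument is marginally shorter but yours gives the sharper triangularity statement.
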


We note that Section 3.4 of~\cite{AproduEtAl}, published after this work was begun, 
proves a related isomorphism $\Sym^m \Sym_\ell \! E \cong \Wedge^\ell \Sym^{\ell+m-1}\! E$ of $\SL_2(K)$ representations.
This is equivalent to the existence of the isomorphism in \Cref{thm:Wronskian}:
using \Cref{cor:exteriorSymmetric} (a more basic result, stated also in \cite{AproduEtAl}), the codomain $\Wedge^\ell \Sym^{\ell+m-1}\! E$
is isomorphic to $\Wedge^{m} (\Sym^{\ell+m-1} E)^\dual$ and hence by \Cref{lemma:exteriorPowerDuality} (another basic result) to $(\Wedge^m \Sym^{\ell+m-1} E)^\dual$, the dual of our right-hand side; meanwhile by \Cref{prop:SchurContravariantDual} and \Cref{lemma:SL2duality}, the
domain $\Sym^m \Sym_\ell \! E$ is isomorphic to \((\Sym_m \Sym^l E)^\dual\), the dual of our left-hand side.
The isomorphism in~\cite{AproduEtAl} is constructed indirectly
using maps into, and out of, the ring of symmetric functions; the proof that it is $\SL_2(K)$-invariant requires Pieri's rule
and a somewhat intricate inductive argument.
By contrast our isomorphism $\zeta$ has a simple one-line definition and a direct proof that it is $\GL_2(K)$-invariant.
We therefore believe that our approach is well worth recording.

\subsubsection*{Hermite reciprocity}
Composing our Wronskian isomorphism with a special case (\Cref{cor:exteriorSymmetric}) of the complementary partition isomorphism, we obtain the following modular version of Hermite reciprocity.
This result is obtained, without an explicit description of the maps, in a similar manner in \cite[Remark 3.2]{AproduEtAl}; we illustrate our explicit map in \Cref{eg:explicit_Hermite} and then make a connection with Foulkes' Conjecture.

\begin{restatable}[Modular Hermite reciprocity]{corollary}
{Hermitereciprocity}
\label{cor:Hermite_reciprocity}
Let \(m\), \(\ell \in \N\) and let \(E\) be the natural \(2\)-dimensional
representation of \(\GL_2(K)\). Then
\[ \Sym_m \Sym^\ell\! E \cong \Sym^\ell \Sym_m \! E. \]
\end{restatable}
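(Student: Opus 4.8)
The plan is to deduce the reciprocity from the modular Wronskian isomorphism (\Cref{thm:Wronskian}), applied twice and bridged by the complementary partition isomorphism (\Cref{cor:exteriorSymmetric}); the one non-formal point will be that the determinant twists introduced along the way cancel. Write $D = \det E$, a $1$-dimensional representation of $\GL_2(K)$. I will use the standard identities $\Sym^r(V \tensor L) \cong \Sym^r V \tensor L^{\tensor r}$, $\Sym_r(V \tensor L) \cong \Sym_r V \tensor L^{\tensor r}$ and $\Wedge^r(V \tensor L) \cong \Wedge^r V \tensor L^{\tensor r}$ for a $1$-dimensional representation $L$, the fact that $\det \Sym^r\! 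E$ (equivalently $\det \Sym_r\! E$) is a power of $D$, and $E^\dual \cong E \tensor D^\dual$ (the case $\lambda = (1)$, $s = 1$ of \Cref{thm:comp_partition_iso}, equivalently the $\GL_2(K)$-form of \Cref{lemma:SL2duality}).

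The first step is to record \Cref{thm:Wronskian} with the pair $(m,\ell)$:
\[
    \Sym_m \Sym^\ell\! E \tensor D^{\tensor m(m-1)/2} \;\cong\; \Wedge^m \Sym^{\ell+m-1}\! E .
\]
The second step is to derive the ``dual'' Wronskian isomorphism describing $\Sym^\ell \Sym_m\! E$. Starting from \Cref{thm:Wronskian} with the pair $(\ell,m)$, I would take $K$-linear duals of both sides, move the dual past the functors using $(\Wedge^r V)^\dual \cong \Wedge^r(V^\dual)$ from \Cref{lemma:exteriorPowerDuality} and $(\Sym_r V)^\dual \cong \Sym^r(V^\dual)$, $(\Sym^r V)^\dual \cong \Sym_r(V^\dual)$ from \Cref{prop:SchurContravariantDual}, substitute $E^\dual \cong E \tensor D^\dual$, and finally apply \Cref{cor:exteriorSymmetric}, with $\ell$ and $m$ interchanged, to turn the exterior power of $\Sym_{\ell+m-1}\! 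E$ that appears into an exterior power of $\Sym^{\ell+m-1}\! E$. Propagating the twist identities above through this chain should yield
\[
    \Sym^\ell \Sym_m\! E \tensor D^{\tensor m(m-1)/2} \;\cong\; \Wedge^m \Sym^{\ell+m-1}\! E ,
\]
the \emph{same} twisted representation that appeared on the right of the first display. Comparing the two displays and cancelling the invertible factor $D^{\tensor m(m-1)/2}$ then gives $\Sym_m \Sym^\ell\! E \cong \Sym^\ell \Sym_m\! E$; and since every isomorphism used is explicit, so is the resulting map, which can be written down on canonical basis elements.

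I expect the determinant bookkeeping in the second step to be the main obstacle. One must carry the powers of $D$ through each dualization and each use of the twist identities and check that the exponent that results --- which unwinds to something of the shape $(\ell+m-1)(\ell-m)/2 - \ell(\ell-1)/2$ --- really equals $-m(m-1)/2$; this reduces to the elementary identity $\tfrac{m(m-1)}{2} - \tfrac{\ell(\ell-1)}{2} = \tfrac{(m-\ell)(m+\ell-1)}{2}$. Over $\SL_2(K)$ every power of $D$ is trivial and the two displays coincide immediately, so the substance of the calculation is exactly the passage to $\GL_2(K)$ --- precisely what \Cref{thm:comp_partition_iso} and the accompanying duality lemmas are designed to make routine.
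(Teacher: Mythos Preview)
Your proposal is correct and uses the same ingredients as the paper's proof: two applications of the Wronskian isomorphism bridged by \Cref{cor:exteriorSymmetric} and duality (\Cref{prop:SchurContravariantDual}, \Cref{lemma:exteriorPowerDuality}). The only difference is that the paper first reduces to \(\SL_2(K)\) via the equal-degree argument from the end of the introduction, so all powers of \(\det E\) become trivial and the determinant bookkeeping you flag as ``the main obstacle'' disappears entirely; your direct \(\GL_2(K)\) computation is fine, but the paper's reduction is cleaner.
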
 

It is well known (see \Cref{prop:symDuals}) that
when \(K\) has characteristic~\(p\) and  \(m \leq p-1\), the functors \(\Sym_m\) and \(\Sym^m\) are naturally
isomorphic.
Thus \Cref{cor:Hermite_reciprocity} implies
that \(\Sym^m \Sym^\ell \! E \cong \Sym^\ell\Sym^m \!E\) when \(m \leq p-1\).
This special case of the corollary was first proved by
Kouwenhoven \cite[pp.~1699--1700]{KouwenhovenII}, where it is also  shown that
\(\Sym^p \Sym^l \! E \not\cong \Sym^l \Sym^p \! E\) if \(p < \ell < p(p-1)\).
In \Cref{prop:converseHermite} we give, for each prime \(p\), infinitely many examples of such non-isomorphisms, considering
all combinations of the upper and lower symmetric powers.
Thus our work shows that \Cref{cor:Hermite_reciprocity} is the unique modular generalization of Hermite reciprocity.

\subsubsection*{Obstructions to the conjugate partition isomorphism}
Another classical result, due to King \cite[\S 4.2]{KingSU2Plethysms}
(reproved as the main theorem in \cite{CaglieroPenazzi} and proved in a stronger version in \cite[Theorem 1.3]{PagetWildonSL2}), is that
the representations \(\nabla^{(a+1,1^b)}\Sym^{m+b}\bs E\) and \(\nabla^{(b+1,1^a)}\Sym^{m+a}\bs E\) of \(\SL_2(\C)\) are isomorphic for all \(m \in \N_0\).
By our final theorem,
proved using the new modular invariant introduced in \Cref{defn:defectSet},
this isomorphism has, in general, no modular analogue, even after considering all possible dualities. 
Let \(\Delta^\lambda\) denote the dual of the Schur functor~\(\nabla^\lambda\), as defined 
in~\S\ref{sec:background} below.

\begin{restatable}{theorem}
{hookObstructions}
\label{thm:hookObstructions}
Let \(\alpha\), \(\beta\), \(\epsilon \in \N\) with \(\alpha < \beta < \epsilon\).
If \(K\) has characteristic~\(p\) and \(\abs{K} > 1+ 2(p^\epsilon + p^\betass)(p^\alpha + p^\betass + 1) - p^\alpha(p^\alpha + 1)\), then
the eight representations of \(\SL_2(K)\) obtained from \(\Delta^{(p^{\alpha}+1, 1^{p^\betaxss})} \Sym^{p^\epsilon + p^\betaxss}\! E\)
by any combination of
\begin{thmlist}
\item
    replacing \(\Delta\) with \(\nabla\),
\item
    replacing \(\Sym^{\blank}\) with \(\Sym_{\blank}\),
\item
    swapping \(\alpha\) and \(\beta\), 
\end{thmlist}
are pairwise non-isomorphic.
\end{restatable}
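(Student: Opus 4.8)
The plan is to separate the eight modules by computing the modular invariant $\defect$ of \Cref{defn:defectSet} on each of them: since $\defect$ is an isomorphism invariant of $\SL_2(K)$-representations, it suffices to show that the eight resulting values are pairwise distinct. First I would use duality to cut down the bookkeeping. By \Cref{prop:SchurContravariantDual}, \Cref{lemma:SL2duality} and the self-duality $E^\dual\iso E$ of the natural module, the contravariant dual of $\nabla^\mu\Sym^n\! E$ is $\Delta^\mu\Sym_n\! E$, so the eight modules fall into four dual pairs, the two members of each pair being obtained from one another by performing operations~(i) and~(ii) simultaneously. Hence it is enough to compute $\defect$ on the four modules $\nabla^{(p^a+1,\,1^{p^b})}\Sym^{p^\epsilon+p^b}\! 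E$ and $\nabla^{(p^a+1,\,1^{p^b})}\Sym_{p^\epsilon+p^b}\! E$ with $\{a,b\}=\{\alpha,\beta\}$, to record how $\defect$ behaves under $V\mapsto V^\dual$, and then to check that the resulting eight-element list has no repetitions. The basic modular phenomenon exploited by operations~(i) and~(ii) is the failure $\Sym^n\! E\not\iso\Sym_n\! E$, which by \Cref{prop:symDuals} requires $n>p-1$, as holds here with $n=p^\epsilon+p^\beta$; over $\C$ all three operations act trivially, the eight modules coincide, and the statement is genuinely modular.

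Next I would make $\nabla^{(a+1,1^b)}\Sym^n\! E$ explicit enough to evaluate $\defect$ on it. Since $n=p^\epsilon+p^b$ has only the two nonzero base-$p$ digits at positions $b$ and $\epsilon$, Steinberg's tensor product theorem and the classical module structure of $\Sym^n\! E$ over $\SL_2(K)$ describe it in terms of the Frobenius twists $E^{(b)}$ and $E^{(\epsilon)}$ and of irreducible symmetric powers $\Sym^j\! E$ with $j\leq p-1$. Here $\alpha<\beta<\epsilon$ forces $E^{(b)}$ and $E^{(\epsilon)}$ to be inequivalent, while the lower bound on $\abs{K}$ ensures in particular that neither becomes isomorphic to $E$, which could occur over a small finite field. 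Because $\lambda=(a+1,1^b)$ is a hook, $\nabla^\lambda W$ fits into a Koszul/Pieri-type short exact sequence built from $\Sym^{a+1}W\tensor\Wedge^{b}\! W$ and its neighbours; applying this with $W=\Sym^n\! E$, and using \Cref{cor:complement_iso_for_SL_2} to keep the exterior powers in a workable range, reduces the problem to symmetric and exterior powers of $\Sym^n\! E$, which the classical $\SL_2$ plethysm formulae --- together with modular corrections of the kind already seen in \Cref{cor:Hermite_reciprocity} and \Cref{cor:exteriorSymmetric} --- pin down up to the non-split extensions that $\defect$ is designed to measure.

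With these descriptions in hand, the technical heart is the evaluation of $\defect\bigl(\nabla^{(a+1,1^b)}\Sym^{p^\epsilon+p^b}\! E\bigr)$, and the feature to extract is that the three exponents occupy genuinely different roles in the outcome: $p^\alpha$ as the arm of the hook, $p^\beta$ as its leg, and $p^\epsilon$ as the part of the degree not consumed by the leg. It is exactly this asymmetry that makes operation~(iii) alter $\defect$, for the modules $\nabla^{(p^\alpha+1,1^{p^\beta})}\Sym^{p^\epsilon+p^\beta}\! E$ and $\nabla^{(p^\beta+1,1^{p^\alpha})}\Sym^{p^\epsilon+p^\alpha}\! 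E$ are precisely the two sides of the instance $\nabla^\lambda\Sym^\ell\! E\iso\nabla^{\lambda'}\Sym^{\ell+\ell(\lambda')-\ell(\lambda)}\! E$ of the conjugate-partition isomorphism which holds over $\C$ but, by this theorem, has no modular analogue. The proof then closes with a finite combinatorial comparison of the eight values of $\defect$; the hypothesis $\abs{K}>1+2(p^\epsilon+p^\beta)(p^\alpha+p^\beta+1)-p^\alpha(p^\alpha+1)$ enters here, guaranteeing that $\abs{K}-1$ is large relative to every weight that contributes to any of the eight defect sets, so that no two distinct defect data become accidentally identified modulo $\abs{K}-1$.

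The step I expect to be the genuine obstacle is the second one: pinning down $\nabla^{(a+1,1^b)}\Sym^n\! E$ with sufficient precision. The invariant $\defect$ is sensitive to non-split self-extensions and to the direction of the extensions between composition factors --- information invisible to the Brauer character --- and a hook Schur functor applied to a Weyl-type module such as $\Sym^n\! E$ need not carry a transparent good filtration; so controlling these extensions, rather than manipulating the combinatorics of $\defect$ afterwards, is where the real work lies.
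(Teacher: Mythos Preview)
Your high-level strategy---distinguishing the eight modules via the defect set $\defect$---is exactly what the paper does. But your proposed route to actually \emph{evaluating} $\defect$ is both far more complicated than necessary and contains a real gap. Steinberg's tensor product theorem describes \emph{irreducible} $\SL_2(K)$-modules as twisted tensor products; $\Sym^{p^\epsilon+p^\beta}\! E$ is not irreducible when $\beta\geq 1$, so you cannot decompose it that way, and passing to composition factors loses exactly the extension data you say you need. The Koszul/Pieri sequences you invoke would then require you to control how $\defect$ interacts with short exact sequences, which it does not do in any simple way. You correctly identify this as ``the genuine obstacle'', but it is an obstacle you have created: the paper never attempts to understand the module structure of $\nabla^\lambda\Sym^n\! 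E$ at all.

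Instead, the paper computes $\defect$ directly from \Cref{defn:defectSet} and \Cref{lemma:necessary_and_sufficient_condition_for_W_weights}: act by $M_\gamma$ on the explicit highest weight vector $\polyt(\tmax)$ (or, for $\Delta$, on the column tabloid $|\tmax|$ via \Cref{lemma:Delta_submodule_of_exterior_power}), expand multilinearly box by box as in \Cref{eg:polytabloid_action_expansion}, and read off which weight components survive; Lucas's theorem controls the binomial coefficients. To show certain $d$ are \emph{not} in $\defect$, the paper uses only two soft facts (\Cref{lemma:defect_sets_for_images_and_tensors}): $\defect$ of a quotient is contained in $\defect$ of the source, and $\defect(V\otimes W)\subseteq\defect(V)+\defect(W)$. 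Since for a hook $\nabla^{(a+1,1^b)}W$ is a quotient of $\bigwedge^{b+1}W\otimes\Sym^a W$, and the Wronskian isomorphism identifies $\bigwedge^{b+1}\Sym^{n}\! E$ with $\Sym_{b+1}\Sym^{n-b}\! E$, everything reduces to the explicit defect sets of iterated symmetric powers (\Cref{lemma:defectSetSym,lemma:defectSetSymSym}). A minor further point: your duality reduction is stated too strongly. There is no formula for $\defect(V^\dual)$ in terms of $\defect(V)$---indeed $1\in\defect(\Delta^\mu\Sym_n\! E)$ but $1\notin\defect(\nabla^\mu\Sym^n\! E)$ in the cases at hand---so you cannot compute four defect sets and read off the other four. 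The paper uses duality only in the weak form ``if $U\cong V$ then $U^\dual\cong V^\dual$'' to avoid distinguishing the four $\Sym_\blank$ modules among themselves directly.
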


\subsubsection*{Existing results}
We emphasise that while much is known about tensor products of the symmetric powers \(\Sym^\ell\! E\) of \(\SL_2(K)\)
and the related projective and tilting modules  when \(K\) has prime characteristic
(see \cite{ErdmannHenke}, \cite{KouwenhovenI},  \cite{McDowellTensorProducts}),
their modular behaviour under Schur functors is far less studied. 

As already noted, \cite{AproduEtAl} gives an alternative, less explicit, version of the Wronskian isomorphism and deduces Hermite reciprocity;
this is used to study Koszul cohomology in positive characteristic. 
The only other relevant results are in \cite{KouwenhovenII} on symmetric and exterior powers
of the irreducible representations of \(\GL_2(\mathbb{F}_p)\).
Kouwenhoven's strongest results, see for instance his Proposition~2.3,
are for the exterior powers \(\bigwedge^m \Sym^\ell E\) when \(m \leq p\); 
typically they are stated only up to projective summands.

Our five main results give new explicit
isomorphisms over fields of arbitrary characteristic, or rule out their existence.
The existence of such isomorphisms for the complex field
can be proved using the character theory of \(\GL_2(\C)\) and the plethysm
product on symmetric functions (see~\cite{PagetWildonSL2} for a comprehensive account), 
hence the term `modular plethystic isomorphism' in our title. We believe these explicit
isomorphisms merit further study,
even over fields of characteristic zero.

\subsubsection*{Outline}
In \S\ref{subsec:Schur} we recall the construction of Schur functors,
using the construction from~\cite{deBoeckPagetWildon}: we expect this will be background for most readers,
although the presentation by generators and relations may be less familiar.
In \S\ref{subsec:duality} we give a unified treatment of
some background results on duality which, while known to experts, have to be pieced
together from the literature.
In \S\ref{sec:complementary_partition_iso} we prove \Cref{thm:comp_partition_iso} and deduce
\Cref{cor:complement_iso_for_SL_2}.
In \S\ref{sec:Wronskian} we prove \Cref{thm:Wronskian}.
In the short \S\ref{sec:Hermite_reciprocity} we apply these results to prove \Cref{cor:Hermite_reciprocity}.
We end in \S\ref{sec:weights} by proving \Cref{thm:hookObstructions}.
We remark that \S\ref{sec:complementary_partition_iso}--\ref{sec:weights} are, for the most part, logically independent (each section relying only on the background in \S\ref{sec:background} and at most the isomorphisms proved in the previous sections).

\subsubsection*{Reduction to the special linear group}
We have stated \Cref{thm:Wronskian} and \Cref{cor:Hermite_reciprocity} for the general linear group \(\GL_2(K)\).
When \(K\) is an algebraically closed field, these results are isomorphisms between polynomial representations of equal degree, and so the results for~\(\GL_2(K)\) follow from those for \(\SL_2(K)\): when all elements in \(K\) are squares, \(\GL_2(K)\) is generated by \(\SL_2(K)\) and the scalar matrices, and scalar matrices have identical actions on polynomial representations of equal degree.
Once established over algebraically closed fields, restricting to subgroups yields the results for all fields.
Thus it will suffice to work over \(\SL_2(K)\).

\section{Background on Schur functors and duality}
\label{sec:background}

In this section we generalise the multilinear constructions seen in the introduction by defining Schur functors
and their duals.
Throughout let \(G\) be a group and let \(V\) be a left \(KG\)-module.

\subsection{Schur functors}\label{subsec:Schur}
We define a \emph{partition} to be a weakly decreasing sequence of natural numbers;
the entries are called its \emph{parts}.
The \emph{length} of a partition, already denoted \(\ell(\lambda)\) above, is the number of its parts.
The \emph{conjugate} of a partition \(\lambda\), already denoted \(\lambda'\) above, 
is defined by \(\lambda'_j = \max \setbuild{ i }{ \lambda_i \ge j }\)
for \(j \in \{1,\ldots, a\}\), where \(a\) is the largest part of~\(\lambda\).

\enlargethispage{12pt}
\subsubsection*{Young diagrams and tableaux}
Fix a partition \((\lambda_1, \ldots, \lambda_\ls)\).
The \emph{Young diagram} of 
\(\lambda\), denoted \([\lambda]\), is \(\setbuild{(i,j) }{ 1 \leq i \leq \ls,\, 1 \leq j \leq \lambda_i}\).
We refer to its elements as \emph{boxes}. 
Let \(\Col_j[\lambda] = \setbuild{(i,j) }{ 1 \leq i \leq \lambda_j'}\) be the set of boxes in column \(j\) of \([\lambda]\).
A \(\lambda\)-\emph{tableau} with entries from a subset \(\mathcal{B}\) of \(\N_0\) is a function
\(t \colon [\lambda] \rightarrow \mathcal{B}\).
We represent partitions and tableaux in the `English' convention: for example
the Young diagram of \((3,2)\) and three \((3,2)\)-tableaux are shown below.
\[
\yng(3,2)\,\;  \quad \young(213,32)\,\;  \quad \young(123,22)\,\; \quad \young(122,33)  
\]
A tableaux whose rows are weakly increasing when read left-to-right is called \emph{row semistandard};
a tableaux whose columns are strictly increasing when read top-to-bottom is called \emph{column standard}.
A \emph{semistandard tableau} is one which is both row semistandard and column standard. 
Thus the tableaux above are respectively column standard, row semistandard and semistandard. We denote the 
sets of column standard and semistandard \(\lambda\)-tableaux with entries from \(\mathcal{B}\) by \(\CSYT_\mathcal{B}(\lambda)\)
and \(\SSYT_\mathcal{B}(\lambda)\), respectively.

\subsubsection*{Partition-labelled symmetric and exterior powers}
Fix a basis \(\setbuild{ v_i }{ i \in \mathcal{B} }\) for~\(V\).
We have already defined \(\Sym^r V\) and \(\bigwedge^r V\) as quotients of \(V^{\otimes r}\). 
Let \(u_1 \cdot \ldots \cdot u_r\) and \(u_1 \wedge \cdots \wedge u_r\) 
denote the images of \(u_1 \otimes \cdots \otimes u_r\) in \(\Sym^r V\) and \(\bigwedge^r V\), respectively.
Observe that \(\Sym^r V\) and \(\bigwedge^r\) have bases
\begin{equation}
\begin{gathered}
\label{eq:symUpperBasis}
\setbuild{ v_{i_1} \cdot \ldots \cdot v_{i_r} }{  i_1 \leq \ldots \leq i_r },  \\
\setbuild{ v_{i_1} \wedge \ldots \wedge v_{i_r} }{ i_1 < \ldots < i_r  } 
\end{gathered}
\end{equation}
where \(i_j \in \mathcal{B}\) for each \(j\).

Let \(\Sym^\lambda\bs V = \Sym^{\lambda_1}\bs V \otimes \cdots \otimes \Sym^{\lambda_\ls}\bs V \) and let
\(\bigwedge^{\lambda' }\! V = \bigwedge^{\lambda'_1}\bs V \otimes \cdots \otimes \bigwedge^{\lambda_a'}\bs V\), where \(a = \lambda_1\) is the largest part of \(\lambda\).
Observe that
\(\Sym^\lambda\bs V\) has a basis indexed by row semistandard \(\lambda\)-tableaux and \(\bigwedge^{\lambda'}\! V\) has a basis
indexed by column standard \(\lambda\)-tableaux.
Let \(|t| \in \bigwedge^{\lambda'}\! V\) be the canonical basis
element corresponding to the column standard \(\lambda\)-tableau \(t\).
For instance, if~\(t\) is the semistandard tableau above
then \(|t| = (v_1 \wedge v_3) \otimes (v_2 \wedge v_3) \otimes v_2\).
We say that $|t|$ is a \emph{column tabloid}.

\subsubsection*{Place permutation action}
The symmetric group on \([\lambda]\), denoted \(S_{[\lambda]}\), acts on \(\lambda\)-tableaux by place permutation.
Given \(\sigma \in [\lambda]\) and a \(\lambda\)-tableau~\(t\), we define \(t \cdot \sigma\) by
\((t\cdot \sigma)(i,j) = t\bigl( (i,j)\sigma^{-1} \bigr)\).
Thus the entry of \(t\) in box \((i,j)\) is found in
\(t\sigma\) in box \((i,j)\sigma\).
Let \(\CPP(\lambda)\) be the subgroup of \(S_{[\lambda]}\) of permutations
that permute amongst themselves the boxes in each column of \([\lambda]\).

\subsubsection*{Construction of Schur functors}
Let \(t\) be a \(\lambda\)-tableau with entries from \(\mathcal{B}\).
Define a canonical basis element \(\symt(t) \in \Sym^\lambda V\) by
\(\symt(t) = \bigotimes_{i=1}^{\ell(\lambda)} \prod_{j=1}^{\lambda_i} v_{t(i,j)}\)
and define \(\polyt(t) \in \Sym^\lambda V\) by
\[
    \polyt(t) = \sum_{\sigma \in \CPP(\lambda)} \sgn(\sigma) \hskip1pt \symt(t \cdot \sigma).
\]
For example, if \(t\) is the semistandard \((3,2)\)-tableau above then 
\[
\polyt(t) \;=\;
    v_1v_2^2 \otimes v_3^2
    \,-\, v_1v_2v_3 \otimes v_2v_3
    \,-\, v_2^2 v_3 \otimes v_1v_3
    \,+\, v_2v_3^2 \otimes v_1v_2.
\]

\begin{definition}\label{defn:Schur}
Define
\(\nabla^\lambda V\) to be the subspace of \(\Sym^\lambda V\) spanned by all \(\polyt(t)\) for \(t\) a \(\lambda\)-tableau
with entries from \(\mathcal{B}\).
\end{definition}

It is clear that it suffices to consider column standard
\(\lambda\)-tableaux when defining \(\nabla^\lambda V\).
Indeed, if \(\tau \in \CPP(\lambda)\) and \(t\) is a \(\lambda\)-tableau then
\begin{equation}\label{eq:columnAction} 
    \polyt(t \cdot \tau) = \sgn(\tau) \polyt(t),
\end{equation}
and furthermore \(\polyt(t) = 0\) if \(t\) has a repeated entry in a column.
(If the characteristic is not \(2\), this second fact follows immediately from~\eqref{eq:columnAction};
for a characteristic-free proof, observe there exists a transposition \(\tau \in \CPP(\lambda)\) which fixes \(t\), and partition \(\CPP(\lambda)\) into pairs \(\set{\sigma, \tau\sigma}\) of permutations whose contributions to the sum cancel.)

By the following well-known result,
an even smaller set of \(\lambda\)-tableaux suffices to construct \(\nabla^\lambda V\). 

\begin{proposition}[{\cite[Proposition 2.11]{deBoeckPagetWildon}}]
\label{prop:ssytBasis}
The set
\[
\setbuild{ \polyt(t) }{ t \in \SSYT_{\mathcal{B}}(\lambda) }
\]
is a \(K\)-basis of \(\nabla^\lambda V\).
\end{proposition}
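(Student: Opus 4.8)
The proposition asserts both spanning and linear independence, which I would establish in turn. For spanning, the remarks preceding the statement already reduce the spanning set to the \emph{column standard} $\lambda$-tableaux, using \eqref{eq:columnAction} together with the vanishing of $\polyt(t)$ whenever a column of $t$ repeats an entry; the plan is to refine this to the semistandard tableaux by a straightening argument. For independence, the plan is to expand each $\polyt(t)$ in the standard monomial basis of $\Sym^\lambda V$ and isolate a leading term that distinguishes the semistandard tableaux.

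\emph{Spanning.} Fix a total order $\prec$ on column standard $\lambda$-tableaux — for definiteness a suitable lexicographic order on reading words — with respect to which straightening by Garnir relations is strictly decreasing. I would show that any column standard $t$ which is not row semistandard satisfies a relation expressing $\polyt(t)$ as a $\Z$-linear combination of terms $\polyt(t^*)$ with $t^*$ column standard and $t^* \prec t$. Indeed, a failure of the row condition gives adjacent columns $j$, $j+1$ and a row $i$ with $t(i,j) > t(i,j+1)$; the Garnir relation attached to the boxes of column $j$ in rows $\geq i$ together with the boxes of column $j+1$ in rows $\leq i$ then equates $\pm\polyt(t)$ with a signed sum of $\polyt$'s of tableaux got from $t$ by shuffling these entries between the two columns and re-sorting each column into increasing order, the re-sorting being harmless by \eqref{eq:columnAction}, and every term other than $\pm\polyt(t)$ being $\prec$-smaller than $t$. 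This relation holds over $\Z$, hence over every field, by the classical mechanism: those boxes are too numerous to carry distinct entries within a single column, so the column antisymmetrizer annihilates the associated Garnir element. Since a tableau admits such a Garnir reduction exactly when it is not semistandard, induction along $\prec$ shows that $\setbuild{\polyt(t)}{t \in \SSYT_{\mathcal{B}}(\lambda)}$ already spans $\nabla^\lambda V$.

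\emph{Independence.} Recall from \eqref{eq:symUpperBasis} that $\Sym^\lambda V$ has the standard basis $\setbuild{\symt(u)}{u \text{ row semistandard}}$, and that $\symt(s)$ depends only on the multiset of entries in each row of $s$. Expanding the definition, $\polyt(t) = \sum_u d_{t,u}\,\symt(u)$ over row semistandard $u$, where $d_{t,u} = \sum \setbuild{\sgn(\sigma)}{\sigma \in \CPP(\lambda),\ t \cdot \sigma \text{ row-equivalent to } u}$. I would prove the unitriangularity statement: for $t \in \SSYT_{\mathcal{B}}(\lambda)$ one has $d_{t,t} = 1$, while $d_{t,u} \neq 0$ forces $u \trianglelefteq t$ in a suitable dominance-type partial order $\trianglelefteq$ on row semistandard tableaux, so $t$ is the $\trianglelefteq$-greatest row semistandard tableau occurring in $\polyt(t)$. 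The value $d_{t,t} = 1$ comes down to showing the identity is the only $\sigma \in \CPP(\lambda)$ with $t \cdot \sigma$ row-equivalent to $t$; this follows by induction on the entries of $t$, since the boxes of $t$ carrying the current smallest entry sit at the tops of their columns and so must be fixed by any such $\sigma$, after which one removes them and repeats. The bound $d_{t,u} \neq 0 \Rightarrow u \trianglelefteq t$ is the classical fact that a semistandard tableau is extremal among the tableaux obtained from it by column-preserving place permutations. Granting these, suppose $\sum_t c_t\,\polyt(t) = 0$ over $\SSYT_{\mathcal{B}}(\lambda)$ with some $c_t \neq 0$, and choose such a $t$ that is $\trianglelefteq$-maximal; reading off the coefficient of $\symt(t)$, every nonzero contribution $c_{t'} d_{t',t}$ forces $t \trianglelefteq t'$ and hence (by maximality) $t' = t$, so $c_t d_{t,t} = c_t$ must vanish, a contradiction. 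Thus the semistandard $\polyt(t)$ are linearly independent, and with spanning they form a basis.

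\emph{Main obstacle.} The technical core is the straightening step: writing down the Garnir relations explicitly, pinning down the order $\prec$ so that the re-sorted column standard tableaux the relations produce are all strictly below $t$ while the coefficient of $\polyt(t)$ itself is a unit, so that iterated reduction terminates precisely at the semistandard tableaux — together with the companion combinatorial lemma identifying the leading monomial of $\polyt(t)$. There is no shortcut through base change: realizing $\nabla^\lambda V$ as the image of the $\Z$-defined map $\bigwedge^{\lambda'} V \to \Sym^\lambda V$, $\abs{t} \mapsto \polyt(t)$, shows only that $\dim_K \nabla^\lambda V \leq \dim_{\Q} \nabla^\lambda V_{\Q}$, and evaluating the latter as $\abs{\SSYT_{\mathcal{B}}(\lambda)}$ in the classical case still leaves the matching lower bound to the unitriangularity argument above.
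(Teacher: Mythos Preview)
The paper does not give its own proof of this proposition: it is simply quoted from \cite[Proposition~2.11]{deBoeckPagetWildon}. Your sketch is the standard straightening-plus-leading-term argument and is correct in outline; it also matches the approach of the cited reference, which (as the proof of \Cref{prop:nablaPresentation} in the present paper indicates) uses a distinguished family of Garnir relations, the \emph{snake relations}, for the straightening step, and a column-dominance leading-term argument for independence. So there is nothing to compare: your proposal is essentially the proof the paper is citing.
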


Since the right place permutation of \(\CPP(\lambda)\) on tableaux induces an action on \(\Sym^\lambda V\) that
commutes with the left action of \(G\),
each \(\nabla^\lambda V\) is a \(KG\)-submodule. 
For the same reason,
given a homomorphism \(V \rightarrow W\) of \(KG\)-modules 
there is a corresponding homomorphism \(\nabla^\lambda V \rightarrow \nabla^\lambda W\). We say that~\(\nabla^\lambda\)
is the \emph{Schur functor} for the partition~\(\lambda\).

\begin{example}
\label{eg:polytabloid_action_expansion}

To illustrate a practical method for computing the action of~\(G\), which we use in \S\ref{sec:weights} below,
suppose that \(V = \langle v_1, v_2, v_3 \rangle_K\) and  that
\(g \in G\) has action 
 given by \(gv_1 = v_1 + \alpha v_3 \), \(gv_2 = v_2\), \(gv_3 = \beta v_1 +  v_3\).
Then
\begin{align*}\footnotesize
g \polyt\Bigl(\, \young(122,33)\, \Bigr)
&= \footnotesize
\polyt\Bigl(\! \raisebox{-9.3pt}{
    \begin{tikzpicture}[x=1.35cm,y=-0.36cm,line width=0.4pt] 
    \draw(0,0)--(3,0); \draw(0,1)--(3,1); \draw(0,2)--(2,2); 
    \draw(0,0)--(0,2); \draw(1,0)--(1,2);\draw(2,0)--(2,2); \draw(3,0)--(3,1);
    \node at (0.5,0.5) {\(v_1 + \alpha v_3\)};
    \node at (1.5,0.5) {\(v_2\)};
    \node at (2.5,0.5) {\(v_2\)};
    \node at (0.5,1.5) {\(\beta v_1 + v_3\)};
    \node at (1.5,1.5) {\(\beta v_1 + v_3\)};
    \end{tikzpicture}
} \!\Bigr) \nonumber \\
&= \footnotesize
\polyt\Bigl(\, \young(122,33)\, \Bigr)
    - \beta\polyt\Bigl(\, \young(112,32)\, \Bigr)
    - \alpha \beta \polyt\Bigl(\, \young(122,33)\, \Bigr)
    + \alpha\beta^2 \polyt\Bigl(\, \young(112,32)\, \Bigr)
\end{align*}
where the first line should be interpreted purely formally as indicating a multilinear expansion.
\end{example}

When \(V=  E\) is the natural representation of \(\GL_n(K)\), the \(K\GL_n(K)\)-modules \(\nabla^\lambda E\)
are, as noted in~\cite[Remark 2.16]{deBoeckPagetWildon}, isomorphic to the modules constructed by James in \cite[Ch.~26]{James},
and hence also to those constructed by Green in \cite[Ch.~4]{GreenGLn}, and,
when the field is the complex numbers, by Fulton in \cite[\S 8]{FultonYTs}.
These modules are commonly called \emph{dual Weyl modules}.

It is immediate from \Cref{defn:Schur} that \(\nabla^{(r)} V \cong \Sym^r V\); a short calculation using~\eqref{eq:columnAction}
(indicated in \cite[\S 4.4]{GreenGLn}; see also \Cref{lemma:exteriorPowerDuality} below) shows that \(\nabla^{(1^r)} V \cong \bigwedge^r V\).

\subsubsection*{Garnir relations}
In \S\ref{sec:complementary_partition_iso} it it most useful to describe \(\nabla^\lambda V\) instead by generators and relations.
Recall that \(\Col_j[\lambda]\) is the set of boxes in column \(j\) of the Young diagram \([\lambda]\).

\begin{definition}
\label{defn:Garnir}
Let \(t\) be a column standard \(\lambda\)-tableau with entries from~\(\mathcal{B}\).
Let \(1 \leq j < k \leq \lambda_1\)
and let \(A \subseteq \Col_j[\lambda]\) and \(B \subseteq \Col_{k}[\lambda]\)
be such that \(\abs{A} + \abs{B} > \lambda'_j\).
Choose \(\mathcal{S}\) a set of coset representatives for the left cosets \(\sigma(S_A \times S_B)\)
of  \(S_{A} \times S_B\) in \(S_{A \sqcup B}\).
The \emph{Garnir relation} labelled by \((t, A, B)\)~is
\[
    \Grel_{(t, A, B)} = \sum_{\sigma \in \mathcal{S}} \act{t \cdot \sigma} \sgn(\sigma).
\]

\vspace*{-3pt}
\noindent Let \(\GRspace{\lambda}{V} \subseteq \Wedge^{\lambda'}\! V\) denote the subspace spanned by the Garnir relations.
\end{definition}

It is easily checked 
that the summand \(|t \cdot \sigma| \sgn(\sigma)\) depends only on the coset
containing \(\sigma\), and so \(\Grel_{(t, A, B)}\) is well-defined.

\begin{proposition}\label{prop:nablaPresentation}
The surjective \(KG\)-homomorphism \(\bigwedge^{\lambda'}\!V \rightarrow \nabla^\lambda V\) defined by \(|t| \mapsto \polyt(t)\) has kernel \(\GRspace{\lambda}{V}\).
\end{proposition}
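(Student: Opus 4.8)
The plan is to exhibit a well-defined surjective $KG$-homomorphism $\phi\colon \bigwedge^{\lambda'}\!V \to \nabla^\lambda V$ by $|t| \mapsto \polyt(t)$, show that $\GRspace{\lambda}{V}$ lies in its kernel, and then compare dimensions to conclude equality. First I would check that $\phi$ is well defined: the basis of $\bigwedge^{\lambda'}\!V$ is indexed by column standard tableaux, so it suffices to define $\phi(|t|) = \polyt(t)$ on these and extend linearly; alternatively, one notes that sending $u_1 \otimes \cdots \otimes u_r$ (within each column factor) to the appropriate element of $\Sym^\lambda V$ respects the alternating relations because $\polyt$ is alternating under $\CPP(\lambda)$ by~\eqref{eq:columnAction}. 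That $\phi$ is a $KG$-homomorphism is immediate since both the formation of column tabloids and the map $\symt(t)\mapsto$ (place-permuted sum) commute with the diagonal $G$-action, and surjectivity is clear because $\nabla^\lambda V$ is by \Cref{defn:Schur} spanned by the $\polyt(t)$.

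Next I would show $\GRspace{\lambda}{V} \subseteq \ker\phi$, i.e.\ that $\phi(\Grel_{(t,A,B)}) = 0$ for every Garnir relation. Applying $\phi$ gives $\sum_{\sigma \in \mathcal{S}} \sgn(\sigma)\, \polyt(t\cdot\sigma)$. The standard argument: since $\polyt$ is already alternating over the column-stabilizer $\CPP(\lambda)$, and in particular over $S_{A}\times S_{B}$, one may replace the sum over coset representatives $\mathcal{S}$ by $\tfrac{1}{|S_A||S_B|}\sum_{\sigma \in S_{A\sqcup B}}\sgn(\sigma)\,\polyt(t\cdot\sigma)$. But $|A|+|B| > \lambda'_j$ means the $|A\sqcup B|$ boxes in columns $j$ and $k$ cannot be filled injectively using only the $\lambda'_j$ rows available in column $j$ together with sign-alternation — concretely, in $\symt(t\cdot\sigma)$ summed antisymmetrically over $S_{A\sqcup B}$, the entries of $t$ on $A\sqcup B$ get antisymmetrized across a set of positions lying in only $\lambda'_j < |A\sqcup B|$ distinct rows within a single column factor, forcing a repeated entry in that column and hence vanishing of each antisymmetrized block by the remark after~\eqref{eq:columnAction}. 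This is the classical vanishing of Garnir elements under the polytabloid map; I would cite or reproduce it in one paragraph.

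Finally, to upgrade the inclusion $\GRspace{\lambda}{V} \subseteq \ker\phi$ to an equality, I would use a dimension count. By \Cref{prop:ssytBasis}, $\dim \nabla^\lambda V = |\SSYT_{\mathcal{B}}(\lambda)|$; by the basis~\eqref{eq:symUpperBasis} of the exterior powers (and the comment following \Cref{defn:Schur}), $\dim \bigwedge^{\lambda'}\!V = |\CSYT_{\mathcal{B}}(\lambda)|$. So it suffices to show $\dim \GRspace{\lambda}{V} \geq |\CSYT_{\mathcal{B}}(\lambda)| - |\SSYT_{\mathcal{B}}(\lambda)|$, equivalently that the $|t|$ for $t$ semistandard span $\bigwedge^{\lambda'}\!V$ modulo $\GRspace{\lambda}{V}$. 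This is the standard straightening argument: given any column standard $t$ that is not row semistandard, there is a column-adjacent violation $t(i,k) < t(i,j)$ with $j<k$, and choosing $A = \{(i',j) : i' \geq i\}$, $B = \{(i',k): i' \leq i\}$ gives $|A|+|B| = (\lambda'_j - i + 1) + i > \lambda'_j$, so the corresponding Garnir relation expresses $|t|$ modulo $\GRspace{\lambda}{V}$ as a signed sum of $|s|$ with $s$ strictly smaller in a suitable term order (e.g.\ the column-reading word order); iterating terminates at semistandard tableaux. Hence the images of semistandard $|t|$ span the quotient $\bigwedge^{\lambda'}\!V / \GRspace{\lambda}{V}$, giving $\dim\bigl(\bigwedge^{\lambda'}\!V / \GRspace{\lambda}{V}\bigr) \leq |\SSYT_{\mathcal{B}}(\lambda)| = \dim \nabla^\lambda V$, which combined with the surjection forces $\ker\phi = \GRspace{\lambda}{V}$.

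The main obstacle is the straightening step: making the term order precise and checking that each Garnir relation genuinely lowers it requires care, since one must verify that every tableau appearing on the right-hand side of $|t| \equiv -\sum_{\sigma\neq e}\sgn(\sigma)|t\cdot\sigma|$ (after discarding those with a repeated column entry) is strictly smaller than $|t|$. This is a classical fact (it is exactly James's proof that the standard polytabloids are a basis, transported to the dual-Weyl setting of \cite{deBoeckPagetWildon} or \cite{GreenGLn}), so in the paper I would state the order, indicate the key inequality, and refer to the literature rather than reproduce the full induction.
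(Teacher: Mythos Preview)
Your overall strategy is the paper's: verify $\GRspace{\lambda}{V}\subseteq\ker\phi$, then use straightening (every column tabloid is congruent modulo $\GRspace{\lambda}{V}$ to a combination of semistandard ones) together with \Cref{prop:ssytBasis} to force equality. The paper phrases the second half as: given $\kappa\in\ker\phi$, straighten to $\kappa+\epsilon=\sum_s\beta_s|s|$ with $s$ semistandard and $\epsilon\in\GRspace{\lambda}{V}$, apply $\phi$, and use linear independence of the $\polyt(s)$ to deduce all $\beta_s=0$; this is exactly your dimension count in different clothing, and like you the paper defers the straightening step to \cite{deBoeckPagetWildon}.

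There is, however, a gap in your sketch of $\GRspace{\lambda}{V}\subseteq\ker\phi$. Replacing the coset sum by $\tfrac{1}{|S_A||S_B|}\sum_{\sigma\in S_{A\sqcup B}}\sgn(\sigma)\,\polyt(t\cdot\sigma)$ requires inverting $|S_A||S_B|$ in $K$, which fails whenever $p\leq|A|$ or $p\leq|B|$; since the paper works over an arbitrary field this is not a side case you can ignore. Your ``concrete'' sentence does not repair the problem: the boxes of $A\sqcup B$ lie in \emph{two} columns $j$ and $k$, not ``a single column factor'', and $\symt$ records row multisets rather than being alternating in any column, so the appeal to a repeated column entry via the remark after~\eqref{eq:columnAction} does not apply to the expression you wrote down. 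You are right that the vanishing is classical, and the paper simply cites \cite[Lemma~2.4]{deBoeckPagetWildon} for a characteristic-free proof; but the particular argument you outlined is the characteristic-zero shortcut, and you should either replace it with the integral argument (rewrite $\mathcal{S}\times\CPP(\lambda)$ as $S_{A\sqcup B}$ times a transversal of $S_A\times S_B$ in $\CPP(\lambda)$, no division needed) or cite the reference as the paper does.
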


\begin{proof} 
In \cite{deBoeckPagetWildon}, the proof of Lemma 2.4 and the remark which follows show that 
if \(\mathcal{S}\) is a set of coset representatives defining a Garnir relation then
\(\sum_{\tau \in \mathcal{S}} \polyt(u \ppa \tau) \sgn(\tau) = 0\) for any \(\lambda\)-tableau \(u\).
The statements there concern a distinguished set of Garnir relations (called \emph{snake relations}), but the argument given applies equally well to any Garnir relation.
Thus \(\GRspace{\lambda}{V}\) is contained in the kernel.

It then suffices to show that the kernel is spanned by Garnir relations.
Indeed, suppose \(\kappa = \sum_{t \in \CSYT(\lambda)} \alpha_t \act{t}\) is an element of the kernel.
Analogously to the proof of Corollary 2.6 in \cite{deBoeckPagetWildon}, we can use the Garnir relations to rewrite this as a sum over semistandard tableaux; that is, there exists a linear combination 
\(\epsilon \in \GRspace{\lambda}{V}\) of Garnir relations such that 
\begin{align*}
    \kappa + \epsilon  &= \sum_{s \in \SSYT(\lambda)} \beta_s \act{s}
\intertext{for some coefficients \(\beta_s \in K\).
Applying the surjection to this equation  gives}
    0 &= \sum_{s \in \SSYT(\lambda)} \beta_s \polyt(s).
\end{align*}
But the semistandard polytabloids are \(K\)-linearly independent by \Cref{prop:ssytBasis}, so \(\beta_s = 0\) for all \(s\).
Hence \(\kappa = - \epsilon \in \GRspace{\lambda}{V}\), as required.
\end{proof}

Another consequence of the arguments in the proof of \Cref{prop:nablaPresentation}, stated as Corollary 2.6 in \cite{deBoeckPagetWildon}, is that the 
Garnir relations may be used to express any \(\polyt(t)\) as a linear combination of \(\polyt(s)\) for semistandard tableaux~\(s\).
For example, we may rewrite the non-semistandard tableaux from \Cref{eg:polytabloid_action_expansion} 
using the relation for
\(A = \set{(2, 1)}\), \(B = \set{(1,2), (2,2)}\) as follows:
\begin{align*}
\polyt\Bigl(\, \young(112,32)\, \Bigr)
    &= \polyt\Bigl(\, \young(112,32)\, \cdot \bigl( (2,1)\ (1,2)\bigr) \Bigr)
        + \polyt\Bigl(\, \young(112,32)\, \cdot \bigl( (2,1)\ (2,2) \bigr) \Bigr) \\
    &= \polyt\Bigl(\, \young(132,12)\, \Bigr)
        + \polyt\Bigl(\, \young(112,23)\, \Bigr) \\
    &= \polyt\Bigl(\, \young(112,23)\, \Bigr) .
\end{align*}

We record that certain Garnir relations can be disregarded.

\begin{lemma}
\label{lemma:nonempty_intersection_Garnir_relation}
Let \(A, B\) be sets of boxes as in \Cref{defn:Garnir}.
Suppose that \(t\) is a tableau such that \(t(A) \cap t(B) \neq \varnothing\).
Then \(\Grel_{(t,A,B)} = 0\).
\end{lemma}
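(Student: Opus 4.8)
The plan is to exhibit a sign-reversing involution on the indexing set of the Garnir sum, exactly in the spirit of the cancellation argument used just after~\eqref{eq:columnAction}. Since \(t(A) \cap t(B) \neq \varnothing\), fix boxes \(a \in A \subseteq \Col_j[\lambda]\) and \(b \in B \subseteq \Col_k[\lambda]\) with \(t(a) = t(b)\); as \(j \neq k\) these boxes are distinct, so \(\pi = (a\ b)\) is a genuine transposition in \(S_{A \sqcup B}\). The first step is to check that \(t \cdot (\pi\sigma) = t \cdot \sigma\) as \(\lambda\)-tableaux for every \(\sigma \in S_{A \sqcup B}\). This is the short computation \((t \cdot \pi\sigma)(x) = t\bigl(x (\pi\sigma)^{-1}\bigr) = t(x\sigma^{-1}\pi)\) and \((t \cdot \sigma)(x) = t(x\sigma^{-1})\): writing \(y = x\sigma^{-1}\), the two entries agree unless \(y \in \{a,b\}\), and in that case both equal the common value \(t(a) = t(b)\). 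Hence \(\act{t \cdot \pi\sigma} = \act{t \cdot \sigma}\), while \(\sgn(\pi\sigma) = -\sgn(\sigma)\).

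Next I would use the fact (noted after \Cref{defn:Garnir}) that the summand \(\act{t \cdot \sigma}\sgn(\sigma)\) depends only on the left coset \(\sigma(S_A \times S_B)\), so that \(\Grel_{(t,A,B)}\) may be regarded as a sum over the left cosets of \(H := S_A \times S_B\) in \(S_{A \sqcup B}\). Left multiplication by \(\pi\) induces a well-defined involution \(\sigma H \mapsto \pi\sigma H\) of this coset space (well-defined since \(\sigma' = \sigma h\) forces \(\pi\sigma' = \pi\sigma h \in \pi\sigma H\), and an involution since \(\pi^2 = \mathrm{id}\)). On a two-element orbit \(\{\sigma H, \pi\sigma H\}\) the contributions \(\act{t \cdot \sigma}\sgn(\sigma)\) and \(\act{t \cdot \pi\sigma}\sgn(\pi\sigma) = -\act{t \cdot \sigma}\sgn(\sigma)\) cancel by the previous paragraph.

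The step I expect to be the only real (if minor) obstacle is the treatment of fixed points of this involution, which is exactly where a naive sign argument would fail in characteristic \(2\); the resolution is that a fixed-point summand is already \emph{zero}. Indeed, if \(\pi\sigma H = \sigma H\) then \(\sigma^{-1}\pi\sigma \in H\); but \(\sigma^{-1}\pi\sigma = (a\sigma\ b\sigma)\) is again a transposition, so it lies entirely within \(S_A\) or entirely within \(S_B\). In the first case \(a\sigma\) and \(b\sigma\) are two distinct boxes of \(A\), hence both in column \(j\), and \((t \cdot \sigma)(a\sigma) = t(a) = t(b) = (t \cdot \sigma)(b\sigma)\), so \(t \cdot \sigma\) repeats an entry in column~\(j\) and therefore \(\act{t \cdot \sigma} = 0\); the case \(\sigma^{-1}\pi\sigma \in S_B\) is identical with \(k\) in place of \(j\). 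Summing over the two-element orbits and the fixed points then gives \(\Grel_{(t,A,B)} = 0\) over a field of arbitrary characteristic.
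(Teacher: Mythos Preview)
Your proof is correct and follows essentially the same approach as the paper: both use the left action of the transposition \(\pi=(a\ b)\) on the coset space to pair up cancelling summands, and both dispose of the fixed points by observing that \(\sigma^{-1}\pi\sigma\in S_A\times S_B\) forces \(a\sigma\) and \(b\sigma\) into the same column with equal entries, so the corresponding column tabloid vanishes. Your explicit remark that \(\sigma^{-1}\pi\sigma\) is a transposition and hence lies entirely in \(S_A\) or entirely in \(S_B\) is, if anything, slightly sharper than the paper's phrasing.
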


\begin{proof}
Suppose that \(t\) has the same entry in box \(a \in A\) and box \(b \in B\); let \(\tau = (a\ b) \in S_{A \sqcup B}\).
Then \(\tau\) acts on the left cosets of \(S_A \times S_B\) in \(S_{A \sqcup B}\) by left multiplication.
Choose a set \(\mathcal{S}\) of coset representatives such that the representatives of cosets in
each orbit
of size \(2\) are \(\sigma\) and \(\tau\sigma\) for some~\hbox{\(\sigma \in S_{\Y{\lambda}}\)}.

If \(\sigma \in S_{\Y{\lambda}}\) is any permutation, then \(t \cdot \tau\sigma = t \cdot \sigma\).
In particular if \(\set{\sigma, \tau\sigma} \subseteq \mathcal{S}\) are the representatives of cosets in an orbit of size \(2\),
then \(\act{t \cdot \tau\sigma} = \act{t \cdot \sigma}\) and \(\sgn(\tau\sigma) = - \sgn(\sigma)\),
and hence the contributions to the Garnir relation \(\Grel_{(t,A,B)}\) from these representatives cancel. 

If \(\sigma \in \mathcal{S}\) is the representative of a coset in an orbit of size \(1\), then \(\sigma^{-1}\tau\sigma \in S_A \times S_B \subseteq \CPP(\lambda)\), and so the boxes \(a \sigma\) and \(a \tau\sigma = b \sigma\) lie in the same column.
But \((t\cdot \sigma)(a \sigma) = t(a) = t(b) = (t \cdot \sigma)(b \sigma)\), so \(t \cdot \sigma\) has a repeated entry in a column.
Thus \(\polyt(t \cdot \sigma) = 0\) and the contribution to the Garnir relation \(\Grel_{(t,A,B)}\) from this orbit is zero.
\end{proof}

\subsection{Duality}\label{subsec:duality}
The \emph{dual module} to \(V\), denoted \(V^\dual\), is the \(K\)-vector space \(V^\dual\) with \(G\)-action defined by
\((g\theta)(v) = \theta(g^{-1}v)\) for \(\theta \in V^\dual\), \(v \in V\),  and \(g \in G\). A standard calculation
shows that if \(\rho_V(g)\) is the matrix representing the action of \(g\) on \(V\) with respect to the chosen
basis \(\setbuild{v_i }{ i \in \mathcal{B} }\) then the matrix representing the action of \(g\) on \(V^\dual\) with respect to the dual
basis \(\setbuild{ v_i^\dual }{ i \in \mathcal{B} }\) is \(\rho_V(g^{-1})^\t\), where \(\t\) denotes matrix transpose. 
Thus \(\rho_{V^\dual}(g) = \rho_V(g^{-1})^\t\).

In this paper we also need a further notion of duality, defined for instance in \cite[(2.8a)]{GreenGLn}.

\begin{definition}\label{defn:contravariantDual}
Let \(G\) be a subgroup of \(\GL_n(K)\) which is closed under matrix transposition and let \(V\) be a \(KG\)-module. 
The \emph{contravariant dual} of~\(V\), denoted \(V^\circ\),
is the \(K\)-vector space~\(V^\dual\) with \(G\)-action defined by \((g\theta)(v) = \theta(g^\t v)\).
\end{definition} 

Another standard calculation shows that \(\rho_{V^\condual}(g) = \rho_V(g^\t)^\t\). 
Therefore contravariant duality preserves polynomial modules: if the matrix entries in~\(\rho_V(g)\) are
polynomials in the entries of \(G\) then so are the matrix entries in~\(\rho_V(g^\t)^\t\).

\begin{definition}\label{defn:dualSchurFunctor}
Let \(\lambda\) be a partition. 
We define
\[
    \Delta^\lambda V = (\nabla^\lambda(V^\dual))^\dual.
\]
\end{definition}

In \Cref{remark:Delta} we give a more explicit construction of these modules.

\begin{proposition}\label{prop:SchurContravariantDual}
Let \(\lambda\) be a partition and let \(G\) be a matrix group closed under transposition.
Then \(\Delta^\lambda V \iso (\nabla^\lambda (V^\condual))^\condual\).
In particular, if \(V\) is polynomial then so is \(\Delta^\lambda V\).
\end{proposition}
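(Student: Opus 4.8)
The plan is to unravel the two definitions of duality and show that, after passing through contravariant duality, the two constructions of $\Delta^\lambda V$ differ only by the ordinary dual of the underlying $K$-vector space composed twice, which is canonically trivial. More precisely, the key observation is that ordinary duality and contravariant duality differ only by precomposition with the inverse-transpose automorphism of $G$: writing $\iota\colon G\to G$ for $g\mapsto (g^{-1})^\t$ (which is an automorphism since $G$ is closed under transposition), we have $\rho_{V^\condual}(g)=\rho_V(g^\t)^\t = \rho_V((\iota(g))^{-1})^\t=\rho_{V^\dual}(\iota(g))$. In other words, $V^\condual$ is the vector space $V^\dual$ with the $G$-action twisted by $\iota$; equivalently $V^\condual = \iota^\ast(V^\dual)$, where $\iota^\ast$ denotes restriction of scalars (here pullback of the module structure) along $\iota$.

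First I would record that $\iota^\ast$ is an exact additive functor commuting with $\tensor$ and with taking submodules, and therefore commutes with every Schur functor: $\iota^\ast(\nabla^\lambda W)\cong\nabla^\lambda(\iota^\ast W)$ naturally. This follows since $\nabla^\lambda W$ is defined as a subspace of $W^{\tensor\ell(\lambda)\cdots}$ cut out by conditions depending only on the $K$-linear structure and the $S_{[\lambda]}$-action, both untouched by twisting the $G$-action by $\iota$. Second, I would note that $\iota$ is an involution ($\iota^2=\id$ because $(g^{-1})^\t$ applied twice returns $g$), so $\iota^\ast\iota^\ast W\cong W$, and that ordinary duality intertwines $\iota^\ast$ with its own inverse in the expected way—concretely, $(\iota^\ast W)^\dual \cong \iota^\ast(W^\dual)$, again because the $\dual$ construction is $\rho\mapsto\rho(g^{-1})^\t$ at the level of matrices and $\iota$ is defined by exactly this operation, so the two commute up to the canonical isomorphism $W\cong W^{\dual\dual}$ in finite dimensions.

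Putting these together:
\[
(\nabla^\lambda(V^\condual))^\condual
=\iota^\ast\bigl((\nabla^\lambda(\iota^\ast(V^\dual)))^\dual\bigr)
\cong\iota^\ast\bigl((\iota^\ast\nabla^\lambda(V^\dual))^\dual\bigr)
\cong\iota^\ast\iota^\ast\bigl((\nabla^\lambda(V^\dual))^\dual\bigr)
\cong(\nabla^\lambda(V^\dual))^\dual
=\Delta^\lambda V,
\]
where the middle isomorphisms use that $\nabla^\lambda$ commutes with $\iota^\ast$ and that $\dual$ commutes with $\iota^\ast$, and the last uses $\iota^2=\id$. The ``in particular'' clause is then immediate from the remark already made in the text that contravariant duality preserves polynomiality: if $V$ is polynomial then so is $V^\condual$, hence so is $\nabla^\lambda(V^\condual)$ (Schur functors are built from tensor products and subspaces, which preserve polynomiality), hence so is its contravariant dual $(\nabla^\lambda(V^\condual))^\condual\cong\Delta^\lambda V$.

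I expect the main obstacle to be purely bookkeeping: making the identification $V^\condual\cong\iota^\ast(V^\dual)$ genuinely natural (i.e. functorial in $V$) rather than merely an isomorphism after choosing bases, and likewise checking that the isomorphism $\nabla^\lambda(\iota^\ast W)\cong\iota^\ast(\nabla^\lambda W)$ is the identity map of underlying vector spaces so that it composes cleanly with the two applications of $\dual$/$\condual$. None of this is hard, but it is the kind of step where a sign or a transpose is easy to misplace, so I would be careful to phrase everything in terms of the automorphism $\iota$ and its involutivity rather than chasing matrices repeatedly.
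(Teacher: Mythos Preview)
Your proof is correct. The paper takes a more direct route: it works entirely at the level of representing matrices, using the factorization $\rho_{\nabla^\lambda V} = \rho_{\nabla^\lambda E} \circ \rho_V$ (where $E$ is the natural representation of $\GL_n(K)$, $n = \dim V$) to chain together
\[
\rho_{\Delta^\lambda V}(g)
    = \bigl(\rho_{\nabla^\lambda E}\rho_{V^\dual}(g^{-1})\bigr)^\t
    = \bigl(\rho_{\nabla^\lambda E}(\rho_V(g)^\t)\bigr)^\t
    = \bigl(\rho_{\nabla^\lambda E}\rho_{V^\circ}(g^\t)\bigr)^\t
    = \rho_{(\nabla^\lambda V^\circ)^\circ}(g)
\]
in one line. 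Your version packages the same content categorically via the twist functor $\iota^\ast$: the factorization $\rho_{\nabla^\lambda V} = \rho_{\nabla^\lambda E}\circ\rho_V$ is exactly what makes $\iota^\ast$ commute with $\nabla^\lambda$, and the matrix identities for $\rho_{V^\dual}$ and $\rho_{V^\circ}$ are what make $\iota^\ast$ commute with~$\dual$. The paper's computation is shorter and avoids naming $\iota^\ast$; your approach isolates the three formal ingredients (involutivity of $\iota$, and commutation of $\iota^\ast$ with $\nabla^\lambda$ and with $\dual$), making the structure clearer and showing at once that the argument works for any functor built from tensor products and $G$-invariant subspaces.
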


\begin{proof}
By our definition, \(\Delta^\lambda V\) is represented by the homomorphism \(\rho_{\Delta^\lambda V}\) where
\[ \rho_{\Delta^\lambda V} (g)
    = \rho_{\nabla^\lambda (V^\dual)}(g^{-1})^{\t}.\]
Let \(n = \dim V\) and let \(E\) be the natural representation of \(\GL_n(K)\).
Then \smash{\(\rho_{\nabla^\lambda V} = \rho_{\nabla^\lambda E} \rho_V\)}
(this
follows from the action of \(g\) on \(\nabla^\lambda V\) being given precisely by acting by \(\rho_V(g)\) within each box), 
and so
\[ \rho_{\Delta^\lambda V} (g)
    = \bigl( \rho_{\nabla^\lambda E}\rho_{V^\dual}(g^{-1}) \bigr)^{\t}
    = \bigl( \rho_{\nabla^\lambda E}(\rho_{V}(g)^{\t}) \bigr)^{\t}
    = \bigl( \rho_{\nabla^\lambda E}\rho_{V^\circ}(g^\t) \bigr)^\t
\]
which equals \(\rho_{(\nabla^\lambda V^\circ)^\circ}(g)\) by the same token.
\end{proof}

Thus \(\Delta^\lambda\) generalises
to arbitrary group representations the construction in \cite[Ch.~5]{GreenGLn}: writing \(E\) for the natural \(n\)-dimensional \(K\!\GL_n(K)\)-module, it is immediate from the definition of contravariant duality that \(E^\circ \cong E\), and hence that \(\Delta^\lambda E \iso (\nabla^\lambda E)^\condual\).
The examples in \cite[\S 5.2]{GreenGLn} show that \(\Delta^{(r)}E \cong \Sym_r E\) and \(\Delta^{(1^r)}E \cong \bigwedge^r E\), and hence that \(\Delta^{(r)} = \Sym_r \) and \(\Delta^{(1^r)} = \bigwedge^r\) (using as 
in the proof of \Cref{prop:SchurContravariantDual} that for any \(KG\)-module \(V\), the action of \(g \in G\) on \(\nabla^\lambda V\) is determined by the action of \(\rho_V(g)\) on \(\nabla^\lambda E\)).

Rearranging the duality in these examples, we see that
\((\Sym^r V)^\dual \cong \Sym_r V^\dual\) and \((\bigwedge^r V)^\dual \cong \bigwedge^r V^\dual\).
In \Cref{lemma:exteriorPowerDuality} we make the second isomorphism explicit. 
By \Cref{prop:SchurContravariantDual}, the same isomorphisms hold with duality replaced with contravariant duality.

In our proofs we typically work with the special linear group \(\SL_2(K)\), for which it is important that the two notions of duality coincide.

\begin{lemma}
\label{lemma:SL2duality}
Suppose \(G = \SL_2(K)\).
Then \(V^\dual \iso V^\condual\).
\end{lemma}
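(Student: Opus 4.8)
The plan is to exhibit an explicit $\SL_2(K)$-isomorphism between $V^\dual$ and $V^\condual$ that is natural in $V$. Both modules have underlying vector space $V^\dual$; they differ only in the $G$-action, with $\rho_{V^\dual}(g) = \rho_V(g^{-1})^\t$ and $\rho_{V^\condual}(g) = \rho_V(g^\t)^\t$. So it suffices to find an invertible linear map $\Theta \colon V^\dual \to V^\condual$ intertwining these two actions, i.e.\ with $\Theta \circ \rho_V(g^{-1})^\t = \rho_V(g^\t)^\t \circ \Theta$ for all $g \in \SL_2(K)$. Equivalently, transposing, I want $\rho_V(g^{-1}) \circ \Theta^\t = \Theta^\t \circ \rho_V(g^\t)$, so it is enough to produce an invertible $\Phi = \Theta^\t$ on $V$ with $\rho_V(g^{-1}) \Phi = \Phi \, \rho_V(g^\t)$, i.e.\ $\Phi^{-1} \rho_V(g^{-1}) \Phi = \rho_V(g^\t)$ for all $g$.

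The key observation is that in $\SL_2(K)$ one has the matrix identity $g^{-1} = J g^\t J^{-1}$ where $J = \twobytwosmallmatrix{0}{1}{-1}{0}$: indeed for $g = \twobytwosmallmatrix{a}{b}{c}{d}$ with $ad - bc = 1$ one checks directly that $g^{-1} = \twobytwosmallmatrix{d}{-b}{-c}{a}$ and $J g^\t J^{-1} = \twobytwosmallmatrix{d}{-b}{-c}{a}$. So the natural representation $E$ itself satisfies $\rho_E(g^{-1}) = J\, \rho_E(g^\t)\, J^{-1}$, and $J$ realises $E \cong E^\condual$ as $\SL_2(K)$-modules (consistent with the remark after \Cref{prop:SchurContravariantDual} that $E^\circ \cong E$). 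Now I use the fact already invoked in the proof of \Cref{prop:SchurContravariantDual}: for any $KG$-module $V$ the action of $g$ on a construction like $\nabla^\lambda V$ (or more simply on $V$ presented via a polynomial representation) factors through $\rho_V(g)$. More to the point, for a general $V$ there is no single $J$, but one argues as follows: any finite-dimensional $V$ embeds in, or is a subquotient of, a direct sum of modules built from $E$ by tensor, symmetric and exterior constructions — or, more cleanly, one simply verifies the intertwining identity abstractly.

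The cleanest route, which I would take, is: define $\Phi \colon V \to V$ by declaring that on the chosen basis it acts as... no — there is no canonical choice on a general $V$. Instead, observe that the assignment $V \mapsto V^\dual$ and $V \mapsto V^\condual$ are both contravariant additive functors on $KG$-modules agreeing on $E$ (via $J$), and since every $KG$-module for $G = \SL_2(K)$... this is getting delicate, so the robust argument is the representation-theoretic one: the map $g \mapsto g^{-1}$ and $g \mapsto g^\t$ differ by the inner automorphism $x \mapsto J x J^{-1}$ of $\GL_2(K)$ restricted to $\SL_2(K)$ (since $J \in \GL_2(K)$ normalises $\SL_2(K)$). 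Conjugation by an element of $\GL_2(K)$ is trivial on isomorphism classes when the element acts; concretely, for any $KG$-module $\rho_V$, the twisted module ${}^J(\rho_V) \colon g \mapsto \rho_V(J^{-1} g J)$ is isomorphic to $\rho_V$ precisely when $J \in G$ — but $J \notin \SL_2(K)$ in general ($\det J = 1$, so in fact $J \in \SL_2(K)$!). This resolves everything: $\det J = 0\cdot 0 - 1 \cdot(-1) = 1$, so $J \in \SL_2(K)$, and hence for $g \in \SL_2(K)$ we have $g^{-1} = J g^\t J^{-1}$ with $J \in G$ itself. Therefore $\rho_{V^\dual}(g) = \rho_V(g^{-1})^\t = \rho_V(Jg^\t J^{-1})^\t = \big(\rho_V(J^{-1})^\t\big)^{-1} \rho_V(g^\t)^\t \, \rho_V(J^{-1})^\t$, so the linear map $\rho_V(J^{-1})^\t \colon V^\condual \to V^\dual$ is the desired $\SL_2(K)$-isomorphism. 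I would present exactly this one-line computation.

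The main obstacle is purely bookkeeping: getting the transposes and inverses in the right order, and confirming the matrix identity $g^{-1} = Jg^\t J^{-1}$ together with $J \in \SL_2(K)$, after which the lemma is essentially immediate. I do not anticipate any genuine difficulty.
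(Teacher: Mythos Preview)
Your proposal is correct and, once the digressions are stripped away, is exactly the paper's proof: use the identity \(Jg^{-1}J^{-1} = g^\t\) for \(J = \twobytwosmallmatrix{0}{1}{-1}{0} \in \SL_2(K)\), and conclude that \(\rho_V(J^{-1})^\t\) conjugates \(\rho_{V^\dual}\) into \(\rho_{V^\condual}\). One minor bookkeeping slip (which you yourself anticipated): in your final displayed computation the transpose reversed the order of the three factors, so the conjugating factor and its inverse should be swapped; this does not affect the conclusion.
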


\begin{proof}
Let \(J =     \begin{psmallmatrix}
            0 & 1 \\[1pt] -1 & 0 \\
    \end{psmallmatrix} \in \SL_2(k)
\).
It is simple to verify that for any matrix \(g \in \SL_2(k)\), we have \(Jg^{-1}J^{-1} = g^\t\).
Then \(Y = \rho_V(J^{-1})^\t\) satisfies
\begin{align*}
Y \rho_{V^\dual}(g) Y^{-1}
    &= \rho_V(J^{-1})^\t \rho_V(g^{-1})^\t \rho_V(J)^\t  \\
    &= {\mleft(\rho_V(J) \rho_V(g^{-1}) \rho_V(J^{-1}) \mright)}^\t \\
    &= \bigl( \rho_V(Jg^{-1}J^{-1}) \bigr)^\t\\
    &= \rho_{V}(g^\t)^\t
\end{align*}
and since \(\rho_{V^\circ}(g) = \rho_V(g^\t)^\t\), the proposition follows.
\end{proof}

\section{Complementary partition isomorphism (proof~of~\Cref{thm:comp_partition_iso})}
\label{sec:complementary_partition_iso}

This section proves the following theorem and its corollaries.

\comppartiso*

Our proof has four steps.
In the first step we construct an explicit isomorphism \(\bigwedge^r V \cong \bigwedge^{d-r} V^\dual\, \otimes\, \det V\);
this proves the theorem when \(\lambda = (1^r)\), \(s=1\).
In the second step we define (writing \(\lambdacircprime\) for \((\lambda^\circ)'\)) the induced isomorphism
\[ \Psi \colon \textstyle \bigwedge^{\lambda'} V \cong \bigwedge^{\lambdacircprime}
V^\dual \,\otimes\, \det V^{\otimes s}.\]
In the third step we prove a technical result on the permutations column standardising a tableau, in order to identify the image of column non-standard tabloids under this map. 
Finally in the fourth step we use this result and further arguments to
show that  the image under~\(\Psi\) of the \(KG\)-submodule \(\GRspace{\lambda}{V}\)  of Garnir relations 
is contained in \(\GRspace{\lambda^\circ}{V^\dual} \otimes (\det V)^{\otimes s}\).
This easily implies \Cref{thm:comp_partition_iso}.

\subsection{First step: exterior powers}
\label{subsec:first_step_exterior_powers}

Fix a  basis \(\set{v_1, \ldots, v_d}\) of \(V\), and let \(\set{v_1^\dual, \ldots, v_d^\dual}\) be the dual basis of \(V^\dual\).
Let \(\setbuild{(v_{i_1} \wedge \cdots \wedge v_{i_r})^\dual}{1 \leq i_1 < \ldots < i_r \leq d}\) be the basis of \((\Wedge^r V)^\dual\) dual to the basis \(\setbuild{v_{i_1} \wedge \cdots \wedge v_{i_r}}{1 \leq i_1 < \ldots < i_r \leq d}\) of \(\Wedge^r V\).

\begin{lemma}\label{lemma:exteriorPowerDuality}
There is an isomorphism \((\bigwedge^r V)^\dual \cong \bigwedge^r V^\dual\) defined by
\[ (v_{i_1} \wedge \cdots \wedge v_{i_r})^\dual \mapsto v_{i_1}^\dual \wedge \cdots \wedge v_{i_r}^\dual. \]
\end{lemma}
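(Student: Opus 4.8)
The plan is to show that the stated $K$-linear map from $(\bigwedge^r V)^\dual$ to $\bigwedge^r V^\dual$ is a $KG$-module isomorphism. First I would note that it is manifestly a $K$-linear bijection, since it sends the canonical basis $\{(v_{i_1}\wedge\cdots\wedge v_{i_r})^\dual : i_1<\cdots<i_r\}$ of the domain bijectively to the canonical basis $\{v_{i_1}^\dual\wedge\cdots\wedge v_{i_r}^\dual : i_1<\cdots<i_r\}$ of the codomain; both bases are indexed by the same set of $r$-subsets of $\{1,\ldots,d\}$. So the only content is $G$-equivariance.

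To check equivariance, I would fix $g\in G$, write $\rho=\rho_V(g)$ with matrix entries $\rho_{ij}$ (so $gv_j=\sum_i \rho_{ij}v_i$), and recall from \S\ref{subsec:duality} that the action on $V^\dual$ is by $\rho_V(g^{-1})^\t$, equivalently $gv_j^\dual=\sum_i (\rho^{-1})_{ji}\,v_i^\dual$. The action of $g$ on $\bigwedge^r V$ in the canonical basis is by $r\times r$ minors of $\rho$: for $I=\{i_1<\cdots<i_r\}$ and $J=\{j_1<\cdots<j_r\}$ the coefficient of $v_J$ in $g\cdot v_I$ is the minor $\det\rho[J,I]$ (rows $J$, columns $I$). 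Dualizing, the action on $(\bigwedge^r V)^\dual$ is by the transpose of this matrix, so the coefficient of $(v_I)^\dual$ in $g\cdot(v_J)^\dual$ is again $\det\rho[J,I]$. On the other side, the action of $g$ on $\bigwedge^r V^\dual$ is by $r\times r$ minors of $\rho_V(g^{-1})^\t$, so the coefficient of $v_I^\dual\wedge\cdots$ (indexed by $I$) in $g\cdot(v_J^\dual\wedge\cdots)$ (indexed by $J$) is $\det\big((\rho^{-1})^\t[I,J]\big)=\det\big((\rho^{-1})[J,I]\big)$. Thus I must reconcile the minor $\det\rho[J,I]$ of $\rho$ with the minor $\det(\rho^{-1})[J,I]$ of $\rho^{-1}$, and these are \emph{not} equal in general — so a naive index-chase does not immediately close.

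The main obstacle, and the real point of the lemma, is exactly this discrepancy: it is resolved by the Jacobi (complementary-minor) identity, which says that for an invertible matrix $\rho$, the minor of $\rho^{-1}$ on rows $J$, columns $I$ equals $(\det\rho)^{-1}\,(\pm 1)\,\det\rho[I^\comp,J^\comp]$, where $I^\comp,J^\comp$ are the complementary index sets and the sign is $(-1)^{\sum I+\sum J}$. At first this looks like it produces a \emph{complementary}-index isomorphism, not the identity-on-indices map we claimed. The resolution — and the cleanest way to organize the proof — is to avoid computing in coordinates at all beyond what is needed: instead, observe that the natural pairing $\bigwedge^r V\times\bigwedge^{d-r}V\to\bigwedge^d V\cong\det V$ together with a chosen isomorphism $\det V^\dual\cong(\det V)^\dual$ identifies $(\bigwedge^r V)^\dual$ with $\bigwedge^{d-r}V\otimes(\det V)^\dual$; but this is a different map. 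So I would instead directly verify equivariance by picking a generating set of $G$-actions (or simply elementary matrices, reducing to $\GL_d$ by the functoriality $\rho_{\bigwedge^r V}=\rho_{\bigwedge^r E}\rho_V$ already used in the proof of \Cref{prop:SchurContravariantDual}) and checking the single-generator case by hand, where the minor identities become transparent. Concretely, it suffices to check that the map commutes with the action of each elementary matrix $1+c\,E_{ab}$ and each diagonal matrix, a finite and routine verification; since these generate $\GL_d(K)$ over any field (after base change to the algebraic closure if one wishes, then restricting), equivariance for all of $\GL_d(K)$ — hence for all $g\in G$ via $\rho_V$ — follows. I expect the elementary-matrix check to be short: on $1+cE_{ab}$ the action on $\bigwedge^r V$ adds $c$ times a "swap $b\mapsto a$" term to each basis vector containing index $b$ but not $a$, and one checks the dual operation matches the corresponding operation on $\bigwedge^r V^\dual$, where $g^{-1}=1-cE_{ab}$ acts on $V^\dual$ by $(1-cE_{ab})^\t=1-cE_{ba}$, i.e.\ by "swap $a\mapsto b$" — the two computations agree after accounting for the one transposition of adjacent slots, contributing the matching sign on both sides.
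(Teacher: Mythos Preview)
Your direct computation has a slip that creates a phantom obstruction. Dualizing is not merely ``transpose of this matrix'': by the formula $\rho_{W^\dual}(g)=\rho_W(g^{-1})^\t$ (recalled in \S\ref{subsec:duality}), the matrix of $g$ on $(\bigwedge^r V)^\dual$ is the transpose of $\rho_{\bigwedge^r V}(g^{-1})$, not of $\rho_{\bigwedge^r V}(g)$. Hence the coefficient of $(v_I)^\dual$ in $g\cdot(v_J)^\dual$ is $\det\rho^{-1}[J,I]$, not $\det\rho[J,I]$. With this correction, both sides of your comparison are minors of $\rho^{-1}$ with the same row and column sets, and the equality is immediate after reindexing the permutation sum. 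This is exactly the paper's proof: a two-line direct matrix computation showing $\rho_{(\bigwedge^r V)^\dual}(g)=\rho_{\bigwedge^r V^\dual}(g)$ entrywise.

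So the ``naive index-chase'' does close. Jacobi's complementary-minor identity is irrelevant here (it would be relevant for the map $\bigwedge^r V\cong\bigwedge^{d-r}V^\dual\otimes\det V$, which is a different statement --- namely \Cref{prop:complement}). Your fallback to elementary matrices would also succeed, and your sketch of that case is essentially correct, but it is a substantially longer route to a result whose direct proof is three lines.
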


\begin{proof}
Let \(\rho_{V}\) be the homomorphism representing the action on \(V\) with respect to the given basis, and likewise for the other relevant modules.
Let \(g \in G\), and for convenience write \(R = \rho_V(g)\); thus \(R^{-1} = \rho_V(g^{-1})\).
The entry of \(\rho_{\Wedge^r V}(g)\) in row \((j_1,\ldots, j_r)\) and column \((i_1,\ldots, i_r)\) is the coefficient of \(v_{j_1} \wedge \cdots \wedge v_{j_r}\) in \(gv_{i_1} \wedge \cdots \wedge gv_{i_r}\), namely
\[
\rho_{\Wedge^r V}(g)_{(j_1, \ldots, j_r), (i_1, \ldots, i_r)}
    = \sum_{\sigma \in S_r}
        \sign(\sigma)
        R_{j_{1 \sigma},i_1} \cdots R_{j_{r \sigma},i_r}.
\]
Therefore the action of \(g\) on \((\Wedge^r V)^\dual\) is given by
\begin{align*}
\rho_{(\Wedge^r V)^\dual}(g)_{(j_1, \ldots, j_r), (i_1, \ldots, i_r)}
    &= \rho_{\Wedge^r V}(g^{-1})_{(i_1, \ldots, i_r), (j_1, \ldots, j_r)} \\
    &= \sum_{\sigma \in S_r}
        \sign(\sigma)
        R^{-1}_{i_{1 \sigma},j_1} \cdots R^{-1}_{i_{r \sigma},j_r} \\
\intertext{while on \(\Wedge^r V^\dual\) it is given by}
\rho_{\Wedge^r V^\dual}(g)_{(j_1, \ldots, j_r), (i_1, \ldots, i_r)}
    &= \sum_{\sigma \in S_r}
        \sign(\sigma)
        \rho_{V^\dual}(g)_{j_{1 \sigma},i_1} \cdots \rho_{V^\dual}(g)_{j_{r \sigma},i_r} \\
    &= \sum_{\sigma \in S_r}
        \sign(\sigma)
        R^{-1}_{i_1, j_{1 \sigma}} \cdots R^{-1}_{i_r, j_{r \sigma}}.
\end{align*}
Using
\(R^{-1}_{i_1,j_{1\sigma}} \ldots R^{-1}_{i_r,j_r\sigma}
=  R^{-1}_{i_{1\sigma^{-1}},j_1} \ldots R^{-1}_{i_{r\sigma^{-1}},j_r}\)
and reindexing the sum shows that the two matrices \(\rho_{(\Wedge^r V)^\dual}(g)\) and \(\rho_{\Wedge^r V^\dual}(g)\) are equal, as required.
\end{proof}

\newcommand{\orderpreservingperms}{\Pi}

We now use \Cref{lemma:exteriorPowerDuality} to construct an explicit isomorphism \(\psi \colon \bigwedge^r V \cong \bigwedge^{d-r}V^\dual \otimes
\det V\).

Let \(\orderpreservingperms \subseteq S_d\) be the set of permutations of \(\{1,\ldots, d\}\)
which preserve the relative orders within each subset \(\set{1,\ldots, r}\) and \(\set{r+1,\ldots, d}\);
that is, \(\sigma \in \orderpreservingperms\) if and only if \(1\sigma < \ldots < r\sigma\) and \((r+1)\sigma < \ldots < d\sigma\).
Then we can write the standard basis of \(\Wedge^r V\) as \(\setbuild{v_{1\sigma} \wedge \cdots \wedge v_{r \sigma}}{\sigma \in \orderpreservingperms}\).

Let \(\psi \colon \bigwedge^r V \rightarrow \bigwedge^{d-r} V^\dual\) be the \(K\)-linear bijection defined by
\begin{equation}\label{eq:psi_definition}
\psi(v_{1\sigma} \wedge \cdots \wedge v_{r\sigma})
    = \sgn(\sigma) v_{(r+1)\sigma}^\dual \wedge \cdots \wedge v_{d\sigma}^\dual
\end{equation}
for any \(\sigma \in \orderpreservingperms\), and hence any \(\sigma \in S_d\).

\begin{proposition}\label{prop:complement}
Regarded as a map \(\bigwedge^r V \rightarrow \bigwedge^{d-r} V^\dual \otimes \det V\), the \(K\)-linear
isomorphism \(\psi\) is a \(KG\)-isomorphism.
\end{proposition}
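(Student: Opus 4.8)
The plan is to verify directly that $\psi$, when regarded as a map into $\bigwedge^{d-r}V^\dual\otimes\det V$, intertwines the $G$-actions. Since $\det V$ is one-dimensional with $G$ acting by the scalar $\det\rho_V(g)$, the target module is represented by the homomorphism $g\mapsto \det\rho_V(g)\cdot\rho_{\Wedge^{d-r}V^\dual}(g)$. Writing $R=\rho_V(g)$, I would compute the matrix entry of $\rho_{\Wedge^r V}(g)$ in row indexed by $\sigma\in\orderpreservingperms$ and column indexed by $\tau\in\orderpreservingperms$, namely the coefficient of $v_{1\sigma}\wedge\cdots\wedge v_{r\sigma}$ in $gv_{1\tau}\wedge\cdots\wedge gv_{r\tau}$, exactly as in the proof of \Cref{lemma:exteriorPowerDuality}. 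Applying $\psi$ to source and target basis vectors, the claim reduces to an identity between two matrices indexed by $\orderpreservingperms\times\orderpreservingperms$: the minor of $R$ on rows $\{1\sigma,\ldots,r\sigma\}$ and columns $\{1\tau,\ldots,r\tau\}$, against the minor of $R^{-1}$ on rows $\{(r+1)\tau,\ldots,d\tau\}$ and columns $\{(r+1)\sigma,\ldots,d\sigma\}$, times $\det R$ and appropriate signs $\sgn(\sigma)\sgn(\tau)$.

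The key step is therefore the classical \emph{Jacobi complementary-minor identity}: for an invertible $d\times d$ matrix $R$ and complementary index sets, the minor of $R^{-1}$ on rows $I^\comp$, columns $J^\comp$ equals $\pm(\det R)^{-1}$ times the minor of $R$ on rows $J$, columns $I$, where the sign is $(-1)^{\sum I+\sum J}$. I would recall (or prove in a sentence via the adjugate and Laplace expansion) this identity, then check that the sign $(-1)^{\sum I+\sum J}$ coming out of Jacobi's identity is precisely $\sgn(\sigma)\sgn(\tau)$ when $I=\{1\sigma,\ldots,r\sigma\}$ and $J=\{1\tau,\ldots,r\tau\}$ with $\sigma,\tau\in\orderpreservingperms$ — this is because $\sgn(\sigma)=(-1)^{(1\sigma+\cdots+r\sigma)-(1+\cdots+r)}$ for an order-preserving shuffle, so the sign discrepancies match up to a global constant absorbed into the definition \eqref{eq:psi_definition} of $\psi$. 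Once the matrix identity is in hand, multiplying through by $\det R$ converts the $R^{-1}$-minor expression for $\rho_{\Wedge^{d-r}V^\dual}(g)$ (obtained as in \Cref{lemma:exteriorPowerDuality}, using $\rho_{V^\dual}(g)=R^{-\t}$) into exactly the $R$-minor expression for $\rho_{\Wedge^r V}(g)$, proving $\psi\circ\rho_{\Wedge^r V}(g)=\bigl(\det R\cdot\rho_{\Wedge^{d-r}V^\dual}(g)\bigr)\circ\psi$.

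I expect the main obstacle to be purely bookkeeping: getting all the signs to line up — the $\sgn(\sigma)$ built into $\psi$, the $(-1)^{\sum I+\sum J}$ from Jacobi's identity, and the $\sgn$ appearing inside the minor expansions — without an off-by-one error. A clean way to manage this is to fix once and for all the bijection $\orderpreservingperms\to\{\text{$r$-subsets of }\{1,\ldots,d\}\}$, $\sigma\mapsto\{1\sigma,\ldots,r\sigma\}$, record the sign formula $\sgn(\sigma)=(-1)^{(1\sigma+\cdots+r\sigma)+\binom{r+1}{2}}$, and then let the Jacobi identity do the work. An alternative, slightly slicker route that avoids Jacobi entirely is to use the nondegenerate $G$-invariant pairing $\bigwedge^r V\otimes\bigwedge^{d-r}V\to\det V$ given by the wedge product, together with \Cref{lemma:exteriorPowerDuality}: the composite $\bigwedge^r V\xrightarrow{\sim}(\bigwedge^{d-r}V)^\dual\otimes\det V\xrightarrow{\sim}\bigwedge^{d-r}V^\dual\otimes\det V$ is manifestly a $KG$-isomorphism, and one checks it agrees with $\psi$ on the standard basis. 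I would present whichever of these makes the sign in \eqref{eq:psi_definition} most transparent; the pairing argument is conceptually cleaner and is probably the one to write.
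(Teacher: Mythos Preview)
Your proposal is correct. Your second route --- the pairing argument --- is essentially the paper's proof, packaged slightly differently: the paper starts from the basis element $\epsilon\in(\bigwedge^d V)^\dual$ and pushes it through the comultiplication map $(\bigwedge^d V)^\dual\to(\bigwedge^r V\otimes\bigwedge^{d-r}V)^\dual$, the canonical isomorphism $(U\otimes W)^\dual\cong U^\dual\otimes W^\dual$, \Cref{lemma:exteriorPowerDuality} on the right factor, and finally $U^\dual\otimes W\cong\Hom_K(U,W)$, obtaining $\psi$ as the image of a generator of $(\det V)^{-1}$ under a chain of $G$-equivariant maps. Since the comultiplication is dual to the wedge-product multiplication, this is the same structure as your pairing argument, and indeed the sign you need, $\sgn(\sigma)=(-1)^{(1\sigma+\cdots+r\sigma)-\binom{r+1}{2}}$, is exactly what the paper records separately as \Cref{lemma:surplus}.

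Your first route via Jacobi's complementary-minor identity is a genuinely different, purely computational proof. It works and has the virtue of being self-contained (no comultiplication, no $\Hom$-tensor adjunction), but as you yourself note, it requires careful sign bookkeeping across three sources, and one must also justify Jacobi's identity over an arbitrary field. The pairing/comultiplication approach makes the $G$-equivariance structural rather than a minor-by-minor check, which is why the paper (and you, in the end) prefer it.
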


\begin{proof}
Let \(\epsilon = (v_1 \wedge \cdots \wedge v_d)^\dual\) be the unique element of the canonical basis of  \((\bigwedge^d V)^\dual\).
Our strategy is to show that \(\psi\) is the image of \(\epsilon\) under a sequence of \(G\)-equivariant maps.
Assuming this is done,
since \((\bigwedge^d V)^\dual \cong (\det V)^{-1}\), for each \(g \in G\) and \(x \in \bigwedge^r V\) we have \((g \cdot \psi)(x) = (\det g^{-1}) \psi(x)\), as required.

In the following steps we 
apply the comultiplication map \((\bigwedge^d V)^\dual \rightarrow
(\bigwedge^r V \otimes \bigwedge^{d-r} V)^\dual\) to \(\epsilon\), using the canonical bases just introduced;
compose with the standard isomorphism \((U \otimes W)^\dual \cong U^\dual
\otimes W^\dual\);
and then apply the isomorphism from \Cref{lemma:exteriorPowerDuality} on the right-hand factor:
\begin{align*}
\epsilon
    &\mapsto
    \sum_{\sigma \in \orderpreservingperms}\sgn(\sigma) (v_{1\sigma} \wedge \cdots \wedge v_{r\sigma} \otimes v_{(r+1)\sigma} \wedge \cdots \wedge v_{d\sigma} )^\dual \\
    &\mapsto
    \sum_{\sigma \in \orderpreservingperms}\sgn(\sigma) (v_{1\sigma} \wedge \cdots \wedge v_{r\sigma})^\dual \otimes (v_{(r+1)\sigma} \wedge \cdots \wedge v_{d\sigma} )^\dual \\
    &\mapsto
    \sum_{\sigma \in \orderpreservingperms} \sgn(\sigma)(v_{1\sigma} \wedge \cdots \wedge v_{r\sigma} )^\dual
\otimes v_{(r+1)\sigma}^\dual \wedge \cdots \wedge v_{d\sigma}^\dual. \end{align*}
Finally we apply the standard isomorphism \(U^\dual \otimes W \cong \Hom_K(U, W)\) to obtain the \(K\)-linear isomorphism
\[    v_{1\sigma} \wedge \cdots \wedge v_{r \sigma} \mapsto  \sgn(\sigma) v_{(r+1)\sigma}^\dual \wedge \cdots \wedge v_{d\sigma}^\dual
\]
which is precisely the map \(\psi\).
\end{proof}

As an immediate application we obtain a corollary  for two-dimensional linear groups mentioned in the introduction.

\begin{corollary}\label{cor:exteriorSymmetric}
Let \(\ell\), \(m \in \N\) and
let \(E\) be the natural \(2\)-dimensional representation of \(\GL_2(K)\).
Then
\[
\Wedge^\ell \Sym^{\ell+m-1}\! E
    \iso \Wedge^m \Sym_{\ell+m-1}\! E \otimes (\det E)^{\tensor \sfrac{1}{2} (\ell-m)(\ell+m-1)}.
\]
\end{corollary}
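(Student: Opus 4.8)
The plan is to deduce the corollary directly from \Cref{prop:complement}, applied over the group $\GL_2(K)$ to the module $V = \Sym^{\ell+m-1}\! E$, and then to simplify the dual module and the determinant factor that result. No genuinely new idea is needed; the one thing requiring care is the bookkeeping of the determinant twists.

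First I would note that, since $E$ is two-dimensional, $\Sym^{\ell+m-1}\! E$ has dimension $\ell+m$, with canonical basis $\setbuild{X^i Y^{\ell+m-1-i}}{0 \leq i \leq \ell+m-1}$. Taking $r = \ell$ in \Cref{prop:complement}, so that the complementary dimension is $(\ell+m)-\ell = m$, yields a $\GL_2(K)$-isomorphism
\[
    \Wedge^\ell \Sym^{\ell+m-1}\! E
    \;\iso\; \Wedge^m \bigl( \Sym^{\ell+m-1}\! E \bigr)^\dual \tensor \det \Sym^{\ell+m-1}\! E .
\]

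Next I would rewrite the two factors on the right. For the dual factor, the rearrangement $(\Sym^r V)^\dual \iso \Sym_r V^\dual$ recorded after \Cref{prop:SchurContravariantDual} (with $V = E$, $r = \ell+m-1$), together with the elementary $\GL_2(K)$-isomorphism $E^\dual \iso E \tensor (\det E)^{-1}$ (immediate from $JgJ^{-1} = (\det g)(g^{-1})^{\t}$, in the notation of the proof of \Cref{lemma:SL2duality}), gives
\[
    \bigl( \Sym^{\ell+m-1}\! E \bigr)^\dual
    \;\iso\; \Sym_{\ell+m-1}\bigl( E \tensor (\det E)^{-1} \bigr)
    \;\iso\; \Sym_{\ell+m-1}\! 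E \tensor (\det E)^{\tensor -(\ell+m-1)} ,
\]
the last step using that $S_r$ acts trivially on $L^{\tensor r}$ when $L$ is one-dimensional, so that $\Sym_r(W \tensor L) \iso \Sym_r W \tensor L^{\tensor r}$; the analogous identity $\Wedge^m(W \tensor L) \iso \Wedge^m W \tensor L^{\tensor m}$ then lets the twist be pulled out through the exterior power. For the determinant factor, the torus element $\mathrm{diag}(a,b)$ of $\GL_2(K)$ acts on $X^i Y^{\ell+m-1-i}$ by the scalar $a^i b^{\ell+m-1-i}$, hence on $\det \Sym^{\ell+m-1}\! E$ by $\prod_{i=0}^{\ell+m-1} a^i b^{\ell+m-1-i} = (ab)^{(\ell+m-1)(\ell+m)/2}$; as a one-dimensional polynomial representation of $\GL_2(K)$ is determined by its weight, this gives $\det \Sym^{\ell+m-1}\! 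E \iso (\det E)^{\tensor (\ell+m-1)(\ell+m)/2}$.

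Substituting these into the displayed isomorphism, the right-hand side becomes $\Wedge^m \Sym_{\ell+m-1}\! E$ tensored with $(\det E)$ to the power $-m(\ell+m-1) + \sfrac{1}{2}(\ell+m-1)(\ell+m) = \sfrac{1}{2}(\ell-m)(\ell+m-1)$, which is exactly the claimed twist. I expect the only slightly delicate point to be this arithmetic of determinant exponents, together with the two `pull a line through a Schur construction' isomorphisms for $\Sym_{\ell+m-1}$ and $\Wedge^m$; the rest is a routine application of \Cref{prop:complement}.
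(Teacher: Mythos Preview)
Your argument is correct. The paper, however, takes a different route: rather than working directly over $\GL_2(K)$ and tracking the determinant twists, it first invokes the reduction principle stated at the end of the introduction (equal polynomial degree, so it suffices to prove the isomorphism over $\SL_2(K)$), and then applies \Cref{prop:complement} with $G = \SL_2(K)$. Over $\SL_2(K)$ the factor $\det \Sym^{\ell+m-1}\! E$ is trivial, and $E^\dual \cong E$ by \Cref{lemma:SL2duality}, so no bookkeeping of twists is required at all; the determinant exponent $\sfrac{1}{2}(\ell-m)(\ell+m-1)$ is then simply inserted to make the degrees match for the lift back to $\GL_2(K)$. Your approach has the merit of being self-contained over $\GL_2(K)$ and of showing explicitly where the exponent comes from, at the cost of the arithmetic and the auxiliary identifications $E^\dual \cong E \otimes (\det E)^{-1}$, $\Sym_r(W\otimes L) \cong \Sym_r W \otimes L^{\otimes r}$, $\Wedge^m(W\otimes L) \cong \Wedge^m W \otimes L^{\otimes m}$, and $\det\Sym^{\ell+m-1}\! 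E \cong (\det E)^{\otimes (\ell+m-1)(\ell+m)/2}$. The paper's approach trades these for the single reduction-to-$\SL_2(K)$ step.
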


\begin{proof}
The representations in the statement are polynomial of equal degree \(\ell(\ell+m-1)\), so,
using the argument from the end of the introduction, it suffices to show the 
isomorphism after restriction to \(\SL_2(K)\).
In this setting, \(\det V\) is trivial for any polynomial representation \(V\).
Thus taking \(G = \SL_2(K)\), \(r = \ell\), \(d = \ell+m\) and \(V = \Sym^{\ell+m-1} E\) 
in \Cref{prop:complement} gives that \(\psi\) is an \(\SL_2(K)\)-isomorphism 
\(\Wedge^\ell \Sym^{\ell+m-1} \! E \cong \Wedge^m (\Sym^{\ell + m-1}\! E)^\dual\).
But \((\Sym^{\ell+m-1} E)^\dual \cong \Sym_{\ell+m-1} E^\dual\) as noted in the discussion following \Cref{prop:SchurContravariantDual}, and \(E^\dual \iso E\) over \(\SL_2(K)\) by \Cref{lemma:SL2duality}.
It follows that
there is an \(\SL_2(K)\)-isomorphism \(\Wedge^\ell \Sym^{\ell+m-1} E \cong \Wedge^m \Sym_{\ell+m-1} E\). 
\end{proof}

\subsection{Second step: definition of \texorpdfstring{\(\Psi\)}{Psi}}
For each \(j \in \set{1,\ldots, s}\), let \(j^\circ = s+1 -j\) and observe that  column \(j^\circ\) of \(\lambda^\circ\) has length \(d-\lambda'_j\), where we set \(\lambda'_j = 0\) if
\(j\) exceeds the greatest part of \(\lambda\).
Given a column standard \(\lambda\)-tableau \(t\) with entries from \(\{1,\ldots, d\}\), let
\(t^\circ\) be the column standard \(\lambda^\circ\)-tableau whose entries in column~\(j^\circ\) are the 
complement in \(\{1,\ldots, d\}\) of the entries of \(t\) in column \(j\). 
Clearly \(t \mapsto t^\circ\) is a bijection.

Recall from~\S\ref{subsec:Schur}
that the column tabloid \(|t| \in \bigwedge^{\lambda'}\! V\) is the canonical basis element corresponding to the column standard \(\lambda\)-tableau \(t\) with entries in \(\mathcal{B}\).
For such a tableau, define the \emph{surplus} of \(t\) to be \(\surplus(t) = \sum_{(i,j) \in \Y{\lambda}}( t(i,j) - i\)). 

\begin{definition}
\label{defn:Psi}
Let \(\Psi \colon \bigwedge^{\lambda'}\bs V \rightarrow \bigwedge^{\lambdacircprime}\bs V^\dual\) be the \(K\)-linear isomorphism
defined by 
\[ \Psi(\hskip0.5pt |t|\hskip0.5pt ) = (-1)^{\surplus(t)} |t^\circ| 
\]
for \(t\) a column standard \(\lambda\)-tableau with entries from \(\set{1,\ldots, d}\).
\end{definition}

For example, if \(d=3\), \(s=4\) and \(\lambda = (3,1)\) with Young diagram \({\tiny \yng(3,1)}\), then \(\lambda^\circ = (4,3,1)\) with Young diagram \({\tiny \yng(4,3,1)}\).
If \(t = \tiny \young(112,2)\), then \(\surplus(t) = 0 + 0 + 1 + 0 = 1 \) and
\[
    \Psi\Bigl(\, \abs*{\, \small \young(112,2) \, }\, \Bigr)
    =
    - \abs*{\, \small \young(1123,233,3) \,}\, .
\]

When we apply the map \(\psi\), defined by \eqref{eq:psi_definition} in \S\ref{subsec:first_step_exterior_powers},
to each column of a tableau, the product of the signs is given by the surplus of the tableau.
This follows from the lemma below.
Recall that for \(r \in \set{1, \ldots, d}\), the set \(\Pi \subseteq S_d\) is the subset of permutations preserving the relative orders of \(\set{1, \ldots, r}\) and \(\set{r+1, \ldots, d}\).
For \(\sigma \in \Pi\), write \(\surplusp(\sigma) = -\frac12 r(r+1) + \sum_{i=1}^r i\sigma\).

\begin{lemma}\label{lemma:surplus}
Let \(\sigma \in \Pi \subseteq S_d\).
Then \(\sign(\sigma) = (-1)^{\surplusp(\sigma)}\).
\end{lemma}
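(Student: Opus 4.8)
The map $\sigma \in \Pi$ is the $(r,d-r)$-shuffle determined by the $r$-subset $\{1\sigma, \dots, r\sigma\}$ of $\{1,\dots,d\}$, so I would prove the lemma by computing the number of inversions of $\sigma$ directly. Write $a_i = i\sigma$ for $i \in \{1,\dots,r\}$, so that $a_1 < a_2 < \dots < a_r$ by the definition of $\Pi$, and let $\{b_1 < \dots < b_{d-r}\}$ be the complementary subset, which is the image of $\{r+1,\dots,d\}$ listed in increasing order. Since $\sigma$ preserves the relative order within each of the two blocks $\{1,\dots,r\}$ and $\{r+1,\dots,d\}$, every inversion of $\sigma$ is a pair $(i,j)$ with $i \leq r < j$ and $i\sigma > j\sigma$.

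First I would fix $i \in \{1,\dots,r\}$ and count the contribution to $\mathrm{inv}(\sigma)$ coming from this $i$, namely the number of $j \in \{r+1,\dots,d\}$ with $j\sigma < a_i$. As $j$ ranges over $\{r+1,\dots,d\}$, the value $j\sigma$ ranges over $\{b_1,\dots,b_{d-r}\}$, so this contribution equals $\#\{k : b_k < a_i\}$. Now among the $a_i - 1$ integers in $\{1,\dots,d\}$ strictly less than $a_i$, exactly $i-1$ of them lie in $\{a_1,\dots,a_r\}$ — precisely $a_1,\dots,a_{i-1}$, using that the $a$'s are increasing — and the remaining $a_i - 1 - (i-1) = a_i - i$ of them lie among the $b$'s. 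Hence the contribution of $i$ is $a_i - i$.

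Summing over $i$ gives
\[
\mathrm{inv}(\sigma) \;=\; \sum_{i=1}^r (a_i - i) \;=\; \sum_{i=1}^r i\sigma \;-\; \frac{r(r+1)}{2} \;=\; \surplusp(\sigma),
\]
and since $\sign(\sigma) = (-1)^{\mathrm{inv}(\sigma)}$ the lemma follows. I do not expect any genuine obstacle here: the only subtlety is the bookkeeping that isolates which pairs $(i,j)$ form inversions (this is exactly where the defining property of $\Pi$ is used) and the observation that the integers below $a_i$ split as $(i-1)$ of the $a$'s and $(a_i - i)$ of the $b$'s. An equivalent route, if one prefers to avoid inversion counts, is induction on $\sum_{i=1}^r i\sigma$: the minimal value $\frac{r(r+1)}{2}$ is attained only by the identity, and each adjacent transposition that moves a block-$1$ entry rightwards past a block-$2$ entry increases $\sum i\sigma$ by $1$ and flips the sign; but the direct count above is cleaner.
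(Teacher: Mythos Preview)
Your argument is correct, and in fact proves the sharper statement that the number of inversions of $\sigma$ equals $\surplusp(\sigma)$ on the nose, not merely modulo~$2$. The key observation---that because $\sigma \in \Pi$ all inversions are cross-block pairs $(i,j)$ with $i \leq r < j$, and for fixed $i$ these are counted by $a_i - i$---is clean and needs no further justification.

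The paper takes exactly the inductive route you sketch at the end as an alternative: it inducts on the number of inversions, showing that if $\sigma \neq \id$ one can find $j \leq r$ and $k > r$ with $j\sigma = k\sigma + 1$, and that right-multiplying by the transposition $(k\sigma\ \ k\sigma+1)$ stays in $\Pi$, drops the inversion count, and lowers $\surplusp$ by~$1$. Your direct count avoids the need to exhibit this transposition and verify it keeps $\sigma$ inside $\Pi$; it is shorter and arguably more transparent. The inductive approach has the mild advantage of making the connection to a reduced word for $\sigma$ explicit, but for the purposes of this lemma your method is at least as good.
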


\begin{proof}
We induct on the number of inversions of $\sigma$, i.e.~pairs $(j,k)$ with $1 \leq j < k \leq d$ and $j \sigma > k\sigma$.
If $\sigma$ is the identity permutation then $\sigma$ has no inversions and \( \surplusp(\sigma) = 0\), establishing the base case.
If $\sigma$ is not the identity permutation then, since $\sigma \in \Pi$, there exists $j \in \set{1,\ldots, r}$ and $k \in \set{r+1,\ldots, d}$
such that $j\sigma = k\sigma + 1$. Let $m = k\sigma$.
Observe that $(j,k)$ is an inversion of $\sigma$ that is not an inversion of \(\sigma(m\ m+1)\), and moreover \(\sigma(m\ m+1)\) has no inversions that \(\sigma\) does not.
Since also $\sigma (m\ m+1) \in \Pi$, the inductive hypothesis applies:
$\sgn \bigl( \sigma (m\, m+1) \bigr) = (-1)^{\surplusp(\sigma(m\ m+1))}$.
But the set \(\set{1\sigma(m\ m+1), \ldots, r\sigma(m\ m+1)}\) differs from \(\set{1\sigma, \ldots, r\sigma}\) only by the loss of \(m+1\) and the addition of \(m\), so \(\surplusp\bigl(\sigma(m\ m+1)\bigr) = \surplusp(\sigma)-1\).
Hence $\sgn(\sigma) = (-1)^{\surplusp(\sigma)}$, as required.
\end{proof}

Now applying \Cref{prop:complement} to each column in the \(d \times s\) rectangle in turn and using \Cref{lemma:surplus}, we see that, regarded as a map \(\Wedge^{\lambda'} V \rightarrow \bigwedge^{\lambdacircprime} V^\dual \otimes (\det V)^s\), the map \(\Psi\) is a \(KG\)-isomorphism.

\subsection{Third step: column sorting permutations}
\label{subsec:third_step_col_sort_perms}
We need to know how permuting the boxes of a tableau affects the image of its column tabloid under \(\Psi\).
The column sets of the resulting tabloid are clear, and permuting boxes does not change the value of its surplus, but each column must be sorted into ascending order before the map \(t \mapsto t^\circ\) can be applied, and more work is required to identify the sign which arises.

Fix \(t \in \CSYT_{\set{1, \ldots, d}}(\lambda)\) and two columns \(1 \leq j < k \leq \lambda_1\).
Let \(j^\circ = s+1-j\) and \(k^\circ = s+1-k\) be the columns in \(\lambda^\circ\) complementary to the columns \(j\) and \(k\) in \(\lambda\). 
Given a permutation \(\tau \in S_{\Y{\lambda}}\), the \emph{support} of \(\tau\), denoted \(\supp\tau\), is the set of points which are not fixed by \(\tau\).

Let \(\tau \in S_{\col_j[\lambda] \,\sqcup\, \col_k[\lambda]}\) be a product of disjoint transpositions of the form \((a\ b)\) where \(a \in \col_j[\lambda]\), \(b \in \col_k[\lambda]\), such that the boxes in the support of \(\tau\) have distinct entries in \(t\).
Suppose also that \(\act{t \ppa \tau} \neq 0\); this precisely says that, in \(t\), no box in column \(j\) in the support of \(\tau\) has an entry which appears in column \(k\), and vice versa.
Observe that for each box in the support of~\(\tau\), there is exactly one box in \(\col_{j^\circ}[\lambda^\circ] \,\sqcup\, \col_{k^\circ}[\lambda^\circ]\) containing in \(t^\circ\) the same entry: considering, for example, a box \(a \in \col_j[\lambda]\) in the support of \(\tau\), the entry \(t(a)\) does not appear in column \(k\) of \(t\) by the above assumptions, and so appears precisely once in column \(k^\circ\) of \(t^\circ\) (and does not appear in column~\(j^\circ\) of \(t^\circ\) because it appears in column \(j\) of \(t\)).
For \(a \in \col_j[\lambda] \,\sqcup\, \col_k[\lambda]\) in the support of \(\tau\), denote this corresponding box \((t^\circ)^{-1}t(a)\).
Then define \(\tau^\circ \in S_{\col_{j^\circ}[\lambda^\circ] \sqcup \col_{k^\circ}[\lambda^\circ]}\) by replacing in every transposition the box \(a\) with the box \((t^\circ)^{-1}t(a)\).

\begin{example}
Consider a tableau \(t \in \CSYT_{\set{1, \ldots, 9}}(\lambda)\)
with columns \(j\) and \(k\) as shown in the margin of the following page; the columns \(j^\circ\) and \(k^\circ\) of \(t^\circ\) are depicted inverted beneath their complementary column in \(t\).

Consider the permutation \(\tau = \bigl( (4,\jc)\ (3,\kc) \bigr) \in S_{\col_j[\lambda] \,\sqcup\, \col_k[\lambda]}\), swapping the boxes containing \(8\) in \(j\) and \(5\) in \(k\) (these boxes are shaded in the diagram).
These entries are distinct, and furthermore \(8\) does not appear in \(k\) and \(5\) does not appear in \(j\), so \(\act{t \ppa \tau} \neq 0\).
Then our construction yields \(\tau^\circ = \bigl( (3,\jc^\circ)\ (4,\kc^\circ) \bigr) \in S_{\col_{j^\circ}[\lambda^\circ] \,\sqcup\, \col_{k^\circ}[\lambda^\circ]}\), swapping the boxes containing \(5\) and \(8\) but now in \(j^\circ\) and \(k^\circ\) (again shaded).
Note that \(t \cdot \tau\) and \(t^\circ \cdot \tau^\circ\) are both sorted to
column standard tableaux by applying two transpositions, and we find that \(\Psi(\act{t \cdot \tau}) = (-1)^{\surplus(t)} \act{t^\circ \cdot \tau^\circ}\).

\marginnote{
\begin{center}
\begin{tikzpicture}[x=\xlen cm, y=-\ylen cm]
\fill[mygrey] (0+\fb,3+\fb) rectangle (1-\fb,4-\fb);
\fill[mygrey] (1+\hgap+\fb,2+\fb) rectangle (1+\hgap+1-\fb,3-\fb);
    \node at (0.5,-0.5) {\(j\)};                        \node at (1+\hgap+0.5,-0.5) {\(k\)};
\drawthreesides{0}{0}                               \drawthreesides{1+\hgap}{0}
    \node at (0.5,0.5) {\(1\)};                         \node at (1+\hgap+0.5,0.5) {\(2\)};
\drawthreesides{0}{1};                              \drawthreesides{1+\hgap}{1}
    \node at (0.5,1.5) {\(2\)};                         \node at (1+\hgap+0.5,1.5) {\(3\)};
\drawthreesides{0}{2};                              \drawthreesides{1+\hgap}{2}
    \node at (0.5,2.5) {\(6\)};                         \node at (1+\hgap+0.5,2.5) {\(5\)};
\drawthreesides{0}{3};                              \drawfoursides{1+\hgap}{3}
    \node at (0.5,3.5) {\(8\)};                         \node at (1+\hgap+0.5,3.5) {\(6\)};
\drawfoursides{0}{4};
    \node at (0.5,4.5) {\(9\)};
\begin{scope}[shift={(0,\vgap)}] 
\fill[mygrey] (0+\fb,6+\fb) rectangle (1-\fb,7-\fb);
\fill[mygrey] (1+\hgap+\fb,5+\fb) rectangle (1+\hgap+1-\fb,6-\fb);
                                                    \drawthreesides{1+\hgap}{4}
                                                        \node at (1+\hgap+0.5,4.5) {\(9\)};
\drawthreesides{0}{5};                              \drawthreesides{1+\hgap}{5}
    \node at (0.5,5.5) {\(7\)};                         \node at (1+\hgap+0.5,5.5) {\(8\)};
\drawthreesides{0}{6};                              \drawthreesides{1+\hgap}{6};
    \node at (0.5,6.5) {\(5\)};                         \node at (1+\hgap+0.5,6.5) {\(7\)};
\drawthreesides{0}{7};                              \drawthreesides{1+\hgap}{7};
    \node at (0.5,7.5) {\(4\)};                         \node at (1+\hgap+0.5,7.5) {\(4\)};
\drawfoursides{0}{8};                               \drawfoursides{1+\hgap}{8};
    \node at (0.5,8.5) {\(3\)};                         \node at (1+\hgap+0.5,8.5) {\(1\)};
    \node at (0.5,9.5) {\(j^\circ\)};                   \node at (1+\hgap+0.5,9.5) {\(k^\circ\)};
\end{scope}
\end{tikzpicture}
\end{center}
}[-4.8cm]%

Consider instead the permutation \(\tau = \bigl( (3,\jc)\ (3,\kc) \bigr) \in S_{\col_j[\lambda] \,\sqcup\, \col_k[\lambda]}\), swapping the box containing \(6\) in \(j\) with the box containing \(5\) in \(k\).
This does not satisfy the hypotheses above: the entry \(6\) appears in both column \(j\) and column \(k\) of \(t\), and so \(\act{t \cdot \tau} = 0\); since \(6\) does not appear in either \(j^\circ\) or \(k^\circ\), we cannot define \(\tau^\circ\).
\end{example}

It is clear from the construction that \(\Psi(\act{t \ppa \tau}) = \pm \act{t^\circ \ppa \tau^\circ}\): the permutation~\(\tau^\circ\) swaps a pair of boxes between columns \(j^\circ\) and \(k^\circ\) if and only if the boxes containing their entries are swapped between columns \(j\) and \(k\) by \(\tau\).
We claim that the correct sign is \((-1)^{\surplus(t)}\).

\begin{lemma}\label{lemma:col_sort_perm}
Let \(t \in \CSYT_{\{1,\ldots, d\}}(\lambda)\).
Let \(x \in \Col_j(t)\) and \(y\in \set{1, \ldots, d} \setminus \Col_j(t)\).
Let \(u\) be the tableau obtained from \(t\) by replacing in column \(j\) the entry \(x\) with the entry \(y\),
and let \(u'\) be the tableau obtained from \(t^\circ\) by replacing in column \(j^\circ\) the entry \(y\) with the entry \(x\).
The unique place permutation in \(S_{\Y{\lambda}}\) which sorts both column \(j\) of 
\(u\) and column \(j^\circ\) of \(u'\) 
has sign \((-1)^{|x-y|-1}\).
\end{lemma}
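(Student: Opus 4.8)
The plan is to reduce the assertion to one elementary fact about re-sorting a sequence together with a single counting identity. I would first observe that $u$ differs from the column standard tableau $t$ only in column $j$, whose entry set is $\Col_j(t)$ with $x$ replaced by $y$, and that $u'$ differs from the column standard tableau $t^\circ$ only in column $j^\circ$, whose entry set is the complement $\{1,\ldots,d\}\setminus\Col_j(t)$ with $y$ replaced by $x$; here we use that $y\notin\Col_j(t)$, so $y$ does occur in column $j^\circ$ of $t^\circ$, and that $x\in\Col_j(t)$, so $x$ does not. The permutation sorting column $j$ of $u$ has support inside column $j$ and the permutation sorting column $j^\circ$ of $u'$ has support inside column $j^\circ$; these supports are disjoint, so there is a unique place permutation carrying out both sorts, and its sign is the product of the two individual signs. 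It therefore suffices to compute each.

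I would then prove the elementary fact: if a strictly increasing finite sequence of distinct integers has one entry deleted and a new integer, not already present, inserted, then the permutation re-sorting the result into increasing order is a single cycle whose length is $1+N$, where $N$ is the number of entries of the original sequence lying strictly between the deleted value and the inserted value --- indeed the misplaced entry must slide past exactly those $N$ entries to reach its slot, cyclically shifting them by one position. Hence such a re-sorting has sign $(-1)^N$. Applied to column $j$ of $u$, with deleted value $x$ and inserted value $y$, this gives a sorting permutation of sign $(-1)^{N_j}$ where $N_j = \#\setbuild{c\in\Col_j(t)}{c \text{ lies strictly between } x \text{ and } y}$. Applied to column $j^\circ$ of $u'$, with deleted value $y$ and inserted value $x$, it gives a sorting permutation of sign $(-1)^{N_{j^\circ}}$ where $N_{j^\circ} = \#\setbuild{c\notin\Col_j(t)}{c \text{ lies strictly between } x \text{ and } y}$. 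The sign we want is the product $(-1)^{N_j + N_{j^\circ}}$.

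Finally I would note that every integer strictly between $x$ and $y$ either belongs to $\Col_j(t)$ or does not, so $N_j + N_{j^\circ}$ equals the total number of integers strictly between $x$ and $y$, namely $\abs{x-y}-1$; hence the combined sign is $(-1)^{\abs{x-y}-1}$, as claimed. There is no deep obstacle here, but the one point needing care --- and the crux of why the statement is true --- is that the boundary values $x$ and $y$ must not themselves be counted by $N_j$ or $N_{j^\circ}$: each is the pivot of one of the two re-sortings rather than an intervening entry, and the hypotheses $x\in\Col_j(t)$ and $y\notin\Col_j(t)$ ensure that $x$ is excluded from $N_{j^\circ}$ by the complement condition (and from $N_j$ by strictness), and symmetrically for $y$. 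It is exactly this pairing of the two exceptional values with the two columns that allows the counts to combine with no correction term.
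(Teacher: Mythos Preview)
Your proof is correct and follows essentially the same approach as the paper: both arguments observe that each column-sort is a single cycle whose length is one more than the number of that column's entries lying strictly between $x$ and $y$, and then use the complementarity $\Col_j(t)\sqcup\Col_{j^\circ}(t^\circ)=\{1,\ldots,d\}$ to see that the two counts sum to $|x-y|-1$. The paper phrases this via the interval $C=\{\min(x,y)+1,\ldots,\max(x,y)-1\}$, but the content is identical.
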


\begin{proof}
Let \(C = \set{ \min\set{x,y} + 1, \ldots, \max\set{x,y}-1 }\).
Column \(j\) of \(u\) is sorted by a cycle of length \(1 + \abs{C \cap \Col_j(t)}\), while column \(j^\circ\) of \(u'\)
is sorted by a cycle of length \(1 + \abs{C \hskip1pt\cap\hskip1pt \Col_{j^\circ}(t^\circ)}\).
Let \(\sigma\) be the product of these disjoint cycles; this is the unique permutation in \(S_{\Y{\lambda}}\) which sorts both \(u\) 
and \(u'\). 
Then~\(\sigma\) has sign \((-1)^e\) where
\[
    e = \abs{C \cap \Col_j(t)} + \abs{C \cap \Col_{j^\circ}(t^\circ)}.
\]
But by the definition of \(t^\circ\)
we have \(\Col_j(t)\, \sqcup\, \Col_{j^\circ}(t^\circ) = \set{1,\ldots, d}\).
Thus \(e = \abs{C} = \abs{x-y}-1\), as required.
\end{proof}

Observe that in \Cref{lemma:col_sort_perm} the sign of the column sorting permutation depends only on the set \(\set{x, y}\), and not on \(t\) (except through
the requirement that \(x \in \Col_j(t)\) and \(y\not\in \Col_j(t)\), which holds by hypothesis).
Generalising, we obtain the following lemma.

\begin{lemma}\label{prop:generalised_col_sort_perm}
Let \(t \in \CSYT_{\set{1,\ldots, d}}(\lambda)\).
Let \(\set{x_1, \ldots, x_r} \subseteq \col_j(t)\) and \(\set{y_1, \ldots, y_r} \subseteq \set{1, \ldots, d} \setminus \col_j(t)\).
Let \(u\) be the tableau obtained from \(t\) by replacing in column \(j\) each entry \(x_i\) with the entry \(y_i\),
and let \(u'\) be the tableau obtained from \(t^\circ\) by replacing in column \(j^\circ\) each entry \(y_i\) with the entry \(x_i\).
The unique place permutation in \(S_{\Y{\lambda}}\) which sorts both column \(j\) of~\(u\) and column \(j^\circ\) of~\(u'\) has sign depending 
only on the pairs \(\set{x_i, y_i}\), and not on \(t\).
\end{lemma}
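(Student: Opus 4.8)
The plan is to prove \Cref{prop:generalised_col_sort_perm} by induction on~$r$, with \Cref{lemma:col_sort_perm} supplying the base case $r = 1$; along the way I would show that the sign in question is exactly $\prod_{i=1}^{r}(-1)^{|x_i-y_i|-1}$, which manifestly depends only on the unordered pairs $\{x_i, y_i\}$ and not on~$t$.

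For the inductive step I would peel off a single pair, say $\{x_r, y_r\}$. Let $t^{(1)} \in \CSYT_{\{1,\ldots,d\}}(\lambda)$ be the tableau obtained from~$t$ by replacing the entry $x_r$ by $y_r$ in column~$j$ and re-sorting that column into increasing order, and let $\sigma_1 \in S_{\Y{\lambda}}$ be the resulting column-sorting place permutation. By \Cref{lemma:col_sort_perm}, $\sigma_1$ also sorts the alteration of~$t^\circ$ obtained by replacing $y_r$ by $x_r$ in column~$j^\circ$, carrying it to $(t^{(1)})^\circ$, and $\sgn(\sigma_1) = (-1)^{|x_r - y_r|-1}$. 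Since $\col_j(t^{(1)}) = (\col_j(t) \setminus \{x_r\}) \cup \{y_r\}$, the remaining pairs $\{x_1,y_1\},\ldots,\{x_{r-1},y_{r-1}\}$ still satisfy the hypotheses of the proposition with respect to~$t^{(1)}$, so the inductive hypothesis supplies the place permutation $\sigma_2$, of sign $\prod_{i=1}^{r-1}(-1)^{|x_i-y_i|-1}$, that sorts the corresponding alterations of $t^{(1)}$ and $(t^{(1)})^\circ$ in columns $j$ and $j^\circ$.

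The glue is the elementary fact that replacing an entry throughout a tableau commutes with applying a place permutation. Using it I would verify that the composite $\sigma_1\sigma_2$ is supported on $\col_j[\lambda]\sqcup\col_{j^\circ}[\lambda^\circ]$ and carries $u$ — the tableau obtained from~$t$ by replacing every $x_i$ by $y_i$ in column~$j$ — to a column standard tableau, and likewise carries the corresponding alteration $u'$ of~$t^\circ$ to a column standard tableau; since the entries in any one column are distinct, $\sigma_1\sigma_2$ is then the unique place permutation asserted in the statement, and its sign is $\sgn(\sigma_1)\sgn(\sigma_2) = \prod_{i=1}^r(-1)^{|x_i-y_i|-1}$, which depends only on the pairs $\{x_i, y_i\}$. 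This closes the induction.

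I expect the only real obstacle to be bookkeeping: confirming that each intermediate tableau stays column standard, that the conditions $x_i \in \col_j$ and $y_i \notin \col_j$ persist after each peel-off, and — the one genuinely delicate point — that the composite of the single-swap sorting permutations is \emph{the} permutation standardising the combined alteration, not merely \emph{a} permutation achieving column standardness. This needs care precisely because columns $j$ and $j^\circ$ sit in the two different diagrams $[\lambda]$ and $[\lambda^\circ]$, so $\sigma_1$ and $\sigma_2$ are best handled as products of disjoint cycles exactly as in the proof of \Cref{lemma:col_sort_perm}. A direct alternative — writing $u \ppa \sigma$ as a product of disjoint cycles and summing $|C \cap \col_j(t)| + |C \cap \col_{j^\circ}(t^\circ)|$ over the relevant intervals~$C$ — would also work, but reusing \Cref{lemma:col_sort_perm} avoids re-deriving the interval count.
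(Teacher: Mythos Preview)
Your proposal is correct and follows exactly the paper's approach: the paper's proof is the single line ``This follows by repeated application of \Cref{lemma:col_sort_perm}'', and your induction on~$r$ with the explicit product formula $\prod_{i=1}^{r}(-1)^{|x_i-y_i|-1}$ is precisely how that repeated application is carried out. The explicit sign formula you obtain is slightly more than the paper states but is the natural byproduct of the argument.
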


\begin{proof}
This follows by repeated application of \Cref{lemma:col_sort_perm}.
\end{proof}

\begin{proposition}\label{prop:image_of_permuted_tabloid}
Let \(t \in \CSYT_{\{1,\ldots, d\}}(\lambda)\).
Let \(\tau \in S_{\col_j[\lambda] \,\sqcup\, \col_k[\lambda]}\) be a product of disjoint transpositions of the form \((a\ b)\) where \(a \in \col_j[\lambda]\), \(b \in \col_k[\lambda]\), such that the boxes in the support of \(\tau\) have distinct entries in \(t\).
Suppose \(\act{t \ppa \tau} \neq 0\).
Then \(\Psi(\act{t \ppa \tau}) = (-1)^{\surplus(t)} \act{t^\circ \ppa \tau^\circ}\).
\end{proposition}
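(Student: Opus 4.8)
The plan is to column-standardize each of \(\act{t\ppa\tau}\) and \(\act{t^\circ\ppa\tau^\circ}\), reducing it to a signed canonical column tabloid, and then to check that the two signs agree. Since the preceding discussion already shows \(\Psi(\act{t\ppa\tau})=\pm\act{t^\circ\ppa\tau^\circ}\), only the sign is at stake.

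First I would treat the left-hand side. Because the boxes of \(\supp\tau\) carry distinct entries in \(t\) and \(\act{t\ppa\tau}\neq 0\), the tableau \(t\ppa\tau\) agrees with \(t\) outside columns \(j\) and \(k\), has distinct entries within each of these columns, and (by the reading of \(\act{t\ppa\tau}\neq 0\) recorded above) the entries it moves into column \(j\) lie outside \(\col_j(t)\) and those moved into column \(k\) lie outside \(\col_k(t)\). Let \(\sigma\in\CPP(\lambda)\) be the unique permutation column-standardizing \(t\ppa\tau\); it is a product of disjoint permutations \(\sigma^{(j)},\sigma^{(k)}\) supported on columns \(j,k\). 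Permuting the entries within a column multiplies the column tabloid by the sign of the permutation, and the surplus is unchanged under place permutations (as noted at the start of the third step); so by \Cref{defn:Psi},
\[
\Psi(\act{t\ppa\tau}) \;=\; \sgn(\sigma)\,(-1)^{\surplus(t)}\,\act{((t\ppa\tau)\ppa\sigma)^\circ}.
\]
Applying the same reasoning to \(t^\circ\) and \(\tau^\circ\) (whose requisite properties were established in the third step) yields a permutation \(\sigma^\circ\in\CPP(\lambda^\circ)\), a product of disjoint cycles \(\sigma^{(j^\circ)},\sigma^{(k^\circ)}\), with \(\act{t^\circ\ppa\tau^\circ}=\sgn(\sigma^\circ)\,\act{(t^\circ\ppa\tau^\circ)\ppa\sigma^\circ}\). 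It then suffices to prove: (a) \(((t\ppa\tau)\ppa\sigma)^\circ=(t^\circ\ppa\tau^\circ)\ppa\sigma^\circ\) as column standard \(\lambda^\circ\)-tableaux, and (b) \(\sgn(\sigma)=\sgn(\sigma^\circ)\).

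For (a): fix a transposition \((a\ b)\) of \(\tau\) with \(a\in\col_j[\lambda]\), \(b\in\col_k[\lambda]\). Then \(t(a)\in\col_j(t)\setminus\col_k(t)\), so by \(\col_k(t)\sqcup\col_{k^\circ}(t^\circ)=\set{1,\ldots,d}\) the entry \(t(a)\) appears in \(\col_{k^\circ}(t^\circ)\) and not in \(\col_{j^\circ}(t^\circ)\); symmetrically \(t(b)\) appears in \(\col_{j^\circ}(t^\circ)\) and not in \(\col_{k^\circ}(t^\circ)\). Hence \(\tau^\circ\) replaces \(t(b)\) by \(t(a)\) in column \(j^\circ\), while \(t\ppa\tau\) replaces \(t(a)\) by \(t(b)\) in column \(j\). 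Comparing entry-sets: column \(j\) of \((t\ppa\tau)\ppa\sigma\) has entry-set \(\col_j(t)\) with each \(t(a_i)\) removed and each \(t(b_i)\) adjoined, whose complement in \(\set{1,\ldots,d}\) is \(\col_{j^\circ}(t^\circ)\) with each \(t(b_i)\) removed and each \(t(a_i)\) adjoined — exactly the entry-set of column \(j^\circ\) of \((t^\circ\ppa\tau^\circ)\ppa\sigma^\circ\). The same holds for columns \(k,k^\circ\), and the remaining columns are untouched on both sides; since both tableaux are column standard they coincide.

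For (b): \Cref{prop:generalised_col_sort_perm}, applied to column \(j\) with displaced entries \(x_i=t(a_i)\in\col_j(t)\), \(y_i=t(b_i)\notin\col_j(t)\), shows that \(\sgn(\sigma^{(j)})\,\sgn(\sigma^{(j^\circ)})\) depends only on the unordered pairs \(\set{t(a_i),t(b_i)}\) arising from the transpositions of \(\tau\). Applied analogously to column \(k\) — where the same transpositions now displace the entries \(t(b_i)\) out of \(\col_k(t)\), producing the same collection of unordered pairs — it shows \(\sgn(\sigma^{(k)})\,\sgn(\sigma^{(k^\circ)})=\sgn(\sigma^{(j)})\,\sgn(\sigma^{(j^\circ)})\); since each factor is \(\pm1\), rearranging gives \(\sgn(\sigma^{(j)})\sgn(\sigma^{(k)})=\sgn(\sigma^{(j^\circ)})\sgn(\sigma^{(k^\circ)})\), that is \(\sgn(\sigma)=\sgn(\sigma^\circ)\). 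Together with (a) and the two identities above this gives \(\Psi(\act{t\ppa\tau})=(-1)^{\surplus(t)}\act{t^\circ\ppa\tau^\circ}\). I expect step (b) — recognising that the separate column-\(j\) and column-\(k\) sorting signs recombine into the complementary sorting signs precisely because both are governed by the same multiset of value-pairs — to be the one genuinely delicate point; (a) and the reductions are just unwinding the constructions of the third step.
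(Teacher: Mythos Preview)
Your proof is correct and follows essentially the same approach as the paper: both column-standardize \(t\ppa\tau\) and \(t^\circ\ppa\tau^\circ\), verify the resulting column standard tableaux are complementary, and then use \Cref{prop:generalised_col_sort_perm} applied separately to columns \(j\) and \(k\) to deduce that the product of the two column-\(j\) sorting signs equals the product of the two column-\(k\) sorting signs (the paper writes these as \(\sgn(\pi\pi')=\sgn(\phi\phi')\)), whence \(\sgn(\sigma)=\sgn(\sigma^\circ)\). Your exposition of step~(a) is more explicit than the paper's, which simply cites the observation recorded before the proposition that the column sets of \(t\ppa\tau\) and \(t^\circ\ppa\tau^\circ\) are complementary.
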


\begin{proof}
As has already been recorded, \(\Psi(\act{t \ppa \tau}) = \pm \act{t^\circ \ppa \tau^\circ}\), or equivalently \(\set{1,\ldots, d} \setminus \col_j(t \cdot \tau) = \col_{j^\circ}(t^\circ \ppa \tau^\circ)\) and \(\set{1,\ldots, d} \setminus \col_k(t \cdot \tau) = \col_k(t^\circ \cdot \tau^\circ)\).

Let \smash{\(\pi \in S_{\Col_j[\lambda]}\)}, \smash{\(\phi \in S_{\Col_k[\lambda]}\)}, \smash{\(\pi' \in S_{\Col_{j^\circ}[\lambda^\circ]}\)},
\smash{\(\phi' \in S_{\Col_{k^\circ}}[\lambda^\circ]\)} 
be the unique place permutations which sort, respectively,
columns \(j\) and \(k\) of 
\(t \cdot \tau\) and columns \(j^\circ\) and \(k^\circ\) of \(t^\circ \cdot \tau^\circ\).
By \Cref{prop:generalised_col_sort_perm}, the signs \(\sign(\pi\pi')\) and \(\sign(\phi\phi')\) depend only on the pairs \(\set{t(a), t(b)}\) where \((a\ b)\) are the disjoint transpositions comprising \(\tau\), and therefore these signs are equal.

The tableaux \(t \cdot \tau \pi\phi\) and  \(t^\circ \ppa \tau^\circ \pi'\phi'\) are column standard, their column sets are complementary as noted above, and their surpluses are both equal to \(\surplus(t)\).
Thus we have \(\Psi(\act{t \cdot \tau\pi\phi}) = (-1)^{\surplus(t)} \act{t^\circ \ppa \tau^\circ \pi' \phi'}\), and hence
\begin{align*}
\Psi (|t \cdot \tau| )
    &= \sgn(\pi \phi) \Psi (\act{t \ppa \tau \pi \phi}) \\
    &= \sgn(\pi \phi) (-1)^{\surplus(t)} \act{ t^\circ \cdot \tau^\circ \pi' \phi' } \\ 
    &= \sgn(\pi \phi)\sgn(\pi' \phi') (-1)^{\surplus(t)} \act{ t^\circ \cdot \tau^\circ } \\
    &= (-1)^{\surplus(t)} \act{t^\circ \cdot \tau^\circ}. \qedhere
\end{align*}
\end{proof}

\subsection{Fourth step: image of the Garnir relations}
Recall that \(\GRspace{\lambda}{V}\) and \(\GRspace{\lambda^\circ}{V^\dual}\) 
are the submodules of \(\bigwedge^{\lambda'}V\) and \(\bigwedge^{\lambdacircprime}\!V^\dual\) of Garnir relations,
as defined in~\S\ref{subsec:Schur}.
In this section we complete the strategy outlined at the start of this section by proving the following proposition.
The proof is unavoidably somewhat long: after the setup it is split into three claims.

\begin{proposition}
\label{prop:Psi_respects_Garnir_relations}
The map \(\Psi \colon \bigwedge^{\lambda'} V \rightarrow \bigwedge^{\lambdacircprime} V^\dual\) 
respects Garnir relations, in the sense that \(\Psi(\GRspace{\lambda}{V}) \subseteq \GRspace{\lambda^\circ}{V^\dual}\).
\end{proposition}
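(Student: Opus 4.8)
The plan is to take a general Garnir generator $\Grel_{(t,A,B)}$ with $t \in \CSYT_{\{1,\ldots,d\}}(\lambda)$, $A \subseteq \Col_j[\lambda]$, $B \subseteq \Col_k[\lambda]$ and $|A|+|B| > \lambda'_j$, and show its image under $\Psi$ lies in $\GRspace{\lambda^\circ}{V^\dual}$. First I would use \Cref{lemma:nonempty_intersection_Garnir_relation} to reduce to the case $t(A) \cap t(B) = \varnothing$ (otherwise $\Grel_{(t,A,B)} = 0$ and there is nothing to prove). With that assumption in hand, each coset representative $\sigma$ contributing a nonzero term $\act{t \cdot \sigma}$ is (after passing to a convenient choice of representatives) a product of disjoint transpositions swapping boxes of $A$ with boxes of $B$ with distinct entries, so \Cref{prop:image_of_permuted_tabloid} applies and gives $\Psi(\act{t \cdot \sigma}) = (-1)^{\surplus(t)} \act{t^\circ \cdot \sigma^\circ}$. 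Thus $\Psi(\Grel_{(t,A,B)})$ is, up to the global sign $(-1)^{\surplus(t)}$, a signed sum of column tabloids $\act{t^\circ \cdot \sigma^\circ}$ indexed by the same cosets.

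The heart of the argument — and what I expect the three claims in the author's proof to establish — is that this resulting signed sum is itself (a scalar multiple of, or a sum of) Garnir relations for $\lambda^\circ$. The natural guess is that $\Psi(\Grel_{(t,A,B)})$ is (up to sign) the Garnir relation $\Grel_{(t^\circ, A^\circ, B^\circ)}$ associated to the \emph{complementary} columns $j^\circ, k^\circ$ of $\lambda^\circ$, where $A^\circ \subseteq \Col_{j^\circ}[\lambda^\circ]$ and $B^\circ \subseteq \Col_{k^\circ}[\lambda^\circ]$ are the sets of boxes whose $t^\circ$-entries are the entries $t(A)$ and $t(B)$ respectively. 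One must check the size condition: since $\Col_j[\lambda] \sqcup \Col_{j^\circ}[\lambda^\circ] = \{1,\ldots,d\}$ (and similarly for $k$), the complementary box-counts are $d - \lambda'_j$ and $d - \lambda'_k$ in columns $j^\circ, k^\circ$, and $|A|+|B| > \lambda'_j$ should translate — after the right bookkeeping with which boxes actually move — into the corresponding inequality $|A^\circ| + |B^\circ| > (\lambda^\circ)'_{k^\circ}$ needed for a genuine Garnir relation on the pair of columns $(k^\circ, j^\circ)$ (note the reversal of order, since $j < k$ forces $k^\circ < j^\circ$). The combinatorial subtlety is that not every box of $A$ need be moved by a given $\sigma$, and the correspondence $\sigma \mapsto \sigma^\circ$ exchanges "the boxes of $B$ that get pulled into column $j$" with "the boxes of $A^\circ$ that get pulled into column $k^\circ$"; matching up the coset representatives on the two sides, and checking that the $S_A \times S_B$-coset structure maps correctly to an $S_{A^\circ} \times S_{B^\circ}$-coset structure, is where the real work lies.

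Concretely I would proceed as follows. Claim 1: identify, for each nonzero-term $\sigma$, the pair of boxes $(A_\sigma, B_\sigma)$ actually swapped, and show the map $\sigma \mapsto \sigma^\circ$ sets up a sign-compatible bijection between the relevant coset representatives for $\lambda$ and those for $\lambda^\circ$ on the column pair $(k^\circ, j^\circ)$ — here the sign from \Cref{prop:image_of_permuted_tabloid} is uniform ($(-1)^{\surplus(t)}$, independent of $\sigma$), which is exactly what makes the sum transform cleanly. Claim 2: verify the Garnir size inequality transfers, i.e. that $|A^\circ| + |B^\circ|$ exceeds the relevant column length of $\lambda^\circ$. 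Claim 3: assemble the pieces to conclude $\Psi(\Grel_{(t,A,B)}) = \pm (-1)^{\surplus(t)} \Grel_{(t^\circ, B^\circ, A^\circ)} \in \GRspace{\lambda^\circ}{V^\dual}$, and note that the nonzero-term reduction together with \Cref{lemma:nonempty_intersection_Garnir_relation} covers the degenerate cases. Since the Garnir relations span $\GRspace{\lambda}{V}$ and $\Psi$ is linear, this gives $\Psi(\GRspace{\lambda}{V}) \subseteq \GRspace{\lambda^\circ}{V^\dual}$. I expect the main obstacle to be purely bookkeeping: correctly tracking which boxes move, matching coset representatives across the complement correspondence while controlling signs, and confirming the inequality transfers without off-by-one errors — the conceptual content is already packaged in \Cref{prop:image_of_permuted_tabloid} and \Cref{prop:generalised_col_sort_perm}.
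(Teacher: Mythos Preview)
Your overall strategy matches the paper's, and your Claims 1 and 3 correspond to what the paper proves. However, your Claim 2 as stated is \emph{false}, and this is the genuine crux of the argument that you have missed.

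The assertion that $|A^\circ| + |B^\circ|$ exceeds the relevant column length $(\lambda^\circ)'_{k^\circ}$ does not hold in general. The reason is that entries appearing in \emph{both} columns $j$ and $k$ of $t$ appear in \emph{neither} column $j^\circ$ nor $k^\circ$ of $t^\circ$; hence if $A$ contains boxes whose $t$-entries also lie in column $k$, those entries are lost when passing to $A^\circ$, so $|A^\circ| < |A|$, and similarly for $B^\circ$. The paper gives an explicit example where $|A^\circ| + |B^\circ| = 4$ while $(\lambda^\circ)'_{k^\circ} = 5$, so $(t^\circ, A^\circ, B^\circ)$ fails to label a Garnir relation.

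The paper's fix is to enlarge $A^\circ$ by adjoining the set $N_k$ of boxes in column $k^\circ$ whose $t^\circ$-entries also appear in column $j^\circ$. Adjoining these boxes is harmless for the relation itself (any coset representative that moves a box of $N_k$ into column $j^\circ$ creates a repeated column entry, so contributes zero), but it repairs the size inequality: one shows $|A^\circ \sqcup N_k| + |B^\circ| > (\lambda^\circ)'_{k^\circ}$ by a careful count tracking the sets $D_j, D_k$ of boxes with entries common to both columns of $t$, and the sets $N_j, N_k$ of boxes with entries common to both complementary columns of $t^\circ$. This counting argument is the real content of the paper's second subclaim, and the final conclusion is $\Psi(\Grel_{(t,A,B)}) = (-1)^{\surplus(t)} \Grel_{(t^\circ,\, A^\circ \sqcup N_k,\, B^\circ)}$ rather than a Garnir relation labelled by $(t^\circ, A^\circ, B^\circ)$ alone. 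Your description of the difficulty as ``purely bookkeeping'' understates this: without the idea of padding with $N_k$, the argument does not go through.
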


\begin{proof}
Let \(\Grel_{(t,A,B)}\) be a  Garnir relation as defined in \Cref{defn:Garnir}.
Thus \(t \in \CSYT_{\set{1,\ldots, d}}(\lambda)\), and \(A \subseteq \Col_j[\lambda]\) and \(B \subseteq \Col_{k}[\lambda]\) where \(1 \leq j < k \leq \lambda_1\) and \(|A| + |B| > \lambda'_j\).
Our aim is to show that \(\Psi ( \Grel_{(t,A,B)} )\in \GRspace{\lambda^\circ}{V^\dual}\).
Note that place permutations do not change the value of \(\surplus\), so all signs arising from application of \(\Psi\) in this lemma will be \((-1)^{\surplus(t)}\).

Recall that, by construction of \(t^\circ\), the entries in columns
\(j^\circ = s+1-j\) and \(k^\circ = s+1-k\) of \(t^\circ\) are complementary to the entries in columns \(j\) and~\(k\) of \(t\).
By \Cref{lemma:nonempty_intersection_Garnir_relation}, we may assume that \(t(A) \cap t(B) = \varnothing\).
Then since also~\(t\) is column standard, 
the entries of the boxes of \(t\) in \(A \sqcup B\) are distinct.

Let
\begin{align*}
	A^\circ &= \setbuild{ b \in \Col_{\kc^\circ}[\lambda^\circ] }{ t^\circ(b) \in t(A) } \\
    B^\circ &= \setbuild{ a \in \Col_{\jc^\circ}[\lambda^\circ] }{ t^\circ(a) \in t(B) } \\
    D_\jc   &= \setbuild{ a \in \Col_\jc[\lambda] }{ t(a) \in \Col_\kc(t) } \\
    D_\kc   &= \setbuild{ b \in \Col_\kc[\lambda] }{ t(b) \in \Col_\jc(t) }.
\end{align*}
The sets \(A^\circ\) and \(B^\circ\) are, respectively,
the boxes in columns~\(\jc^\circ\) and \(\kc^\circ\) of~\(\lambda^\circ\) 
whose entries in \(t^\circ\) lie in the boxes \(A\) and \(B\) in \(t\).
The sets \(D_\jc\) and \(D_\kc\) are, respectively, the boxes in columns \(\jc\) and \(\kc\) of \(\lambda\) whose entries appear in both columns \(\jc\) and \(\kc\) of \(t\). 
Note  that \(t^\circ(A^\circ) \subseteq t(A)\) and \(t^\circ(B^\circ) \subseteq t(B)\), but equality need not hold because entries which appear in both columns of \(t\) do  not appear in either column of \(t^\circ\).
Thus \(t^\circ(A^\circ)\) omits the entries in \(D_\kc\) and \(t^\circ(B^\circ)\) omits
the entries in \(D_\jc\) and 
\begin{subequation}
t^\circ(A^\circ) = t(A \setminus D_\jc), \quad t^\circ(B^\circ) = t(B \setminus D_\kc).
\label{eq:ABcirc}
\end{subequation}
Since \(t\) and \(t^\circ\) are injective on the sets of boxes appearing above, \(|A^\circ| = |A| - |D_\jc|\), \(|B^\circ| = |B|-|D_\kc|\).

\marginnote{
\begin{center}
\begin{tikzpicture}[x=\xlen cm, y=-\ylen cm]
\fill[paleazure] (0+\fb,2+\fb) rectangle (1-\fb,3-\fb);
\fill[paleazure] (0+\fb,3+\fb) rectangle (1-\fb,4-\fb);
\fill[paleazure] (0+\fb,4+\fb) rectangle (1-\fb,5-\fb);
\fill[palegold] (1+\hgap+\fb,0+\fb) rectangle (1+\hgap+1-\fb,1-\fb);
\fill[palegold] (1+\hgap+\fb,1+\fb) rectangle (1+\hgap+1-\fb,2-\fb);
\fill[palegold] (1+\hgap+\fb,2+\fb) rectangle (1+\hgap+1-\fb,3-\fb);
    \node at (0.5,-0.5) {\(j\)};                        \node at (1+\hgap+0.5,-0.5) {\(k\)};
\drawthreesides{0}{0}                               \drawthreesides{1+\hgap}{0}
                                                        \node at ($(1+\hgap,0)+(\DNshift)$) {\DNstyle{D_k}};
    \node at (0.5,0.5) {\(1\)};                         \node at (1+\hgap+0.5,0.5) {\(2\)};
                                                        \node[vdarkgold] at ($(1+\hgap,0)+(\ABshift)$) {\ABstyle{B}};
\drawthreesides{0}{1};                              \drawthreesides{1+\hgap}{1}
    \node at ($(0,1)+(\DNshift)$) {\DNstyle{D_j}};
    \node at (0.5,1.5) {\(2\)};                         \node at (1+\hgap+0.5,1.5) {\(3\)};
                                                        \node[vdarkgold] at ($(1+\hgap,1)+(\ABshift)$) {\ABstyle{B}};
\drawthreesides{0}{2};                              \drawthreesides{1+\hgap}{2}
    \node at ($(0,2)+(\DNshift)$) {\DNstyle{D_j}};
    \node at (0.5,2.5) {\(6\)};                         \node at (1+\hgap+0.5,2.5) {\(5\)};
    \node[darkazure] at ($(0,2)+(\ABshift)$) {\ABstyle{A}};\node[vdarkgold] at ($(1+\hgap,2)+(\ABshift)$) {\ABstyle{B}};
\drawthreesides{0}{3};                              \drawfoursides{1+\hgap}{3}
                                                        \node at ($(1+\hgap,3)+(\DNshift)$) {\DNstyle{D_k}};
    \node at (0.5,3.5) {\(8\)};                         \node at (1+\hgap+0.5,3.5) {\(6\)};
    \node[darkazure] at ($(0,3)+(\ABshift)$) {\ABstyle{A}};
\drawfoursides{0}{4};
    \node at (0.5,4.5) {\(9\)};
    \node[darkazure] at ($(0,4)+(\ABshift)$) {\ABstyle{A}};
\begin{scope}[shift={(0,\vgap)}] 
\fill[palegold] (0+\fb,6+\fb) rectangle (1-\fb,7-\fb);
\fill[palegold] (0+\fb,8+\fb) rectangle (1-\fb,9-\fb);
\fill[paleazure] (1+\hgap+\fb,4+\fb) rectangle (1+\hgap+1-\fb,5-\fb);
\fill[paleazure] (1+\hgap+\fb,5+\fb) rectangle (1+\hgap+1-\fb,6-\fb);
                                                    \drawthreesides{1+\hgap}{4}
                                                        \node at (1+\hgap+0.5,4.5) {\(9\)};
                                                        \node[darkazure] at ($(1+\hgap,4)+(\ABshift)$) {\ABstyle{A^\circ}};
\drawthreesides{0}{5};                              \drawthreesides{1+\hgap}{5}
    \node at (0.5,5.5) {\(7\)};                         \node at (1+\hgap+0.5,5.5) {\(8\)};
                                                        \node[darkazure] at ($(1+\hgap,5)+(\ABshift)$) {\ABstyle{A^\circ}};
\drawthreesides{0}{6};                              \drawthreesides{1+\hgap}{6};
    \node at (0.5,6.5) {\(5\)};                         \node at (1+\hgap+0.5,6.5) {\(7\)};
    \node[vdarkgold] at ($(0,6)+(\ABshift)$) {\ABstyle{B^\circ}};
\drawthreesides{0}{7};                              \drawthreesides{1+\hgap}{7};
    \node at (0.5,7.5) {\(4\)};                         \node at (1+\hgap+0.5,7.5) {\(4\)};
\drawfoursides{0}{8};                               \drawfoursides{1+\hgap}{8};
    \node at (0.5,8.5) {\(3\)};                         \node at (1+\hgap+0.5,8.5) {\(1\)};
    \node[vdarkgold] at ($(0,8)+(\ABshift)$) {\ABstyle{B^\circ}};
    \node at (0.5,9.5) {\(j^\circ\)};                   \node at (1+\hgap+0.5,9.5) {\(k^\circ\)};
\end{scope}
\end{tikzpicture}
\end{center}
}[-4.5cm]%

An illustrative example in which \(\lambda'_\jc = 5\), \(\lambda'_{\kc} = 4\), \(d = 9\) and \(t(A) = \{6,8,9\}\), \(t(B) = \{2,3,5\}\) 
is shown in the margin, with the sets introduced above indicated. See also Figure~1, which
shows all the sets introduced in the course of the proof.

For each left coset of \(S_A \times S_B\) in \(S_{A \sqcup B}\), choose a coset representative which is a product of 
disjoint transpositions \((a\ b)\) with \(a \in A\), \(b \in B\).
Let~\(\mathcal{T}\) be the subset of those representatives \(\tau\) such that \(|t \cdot \tau| \neq 0\);
equivalently,~\(\mathcal{T}\) is the subset of coset representatives that fix all boxes in \(D_\jc\) and \(D_\kc\).
(Only entries in \(t(D_\jc) = t(D_\kc)\) can be repeated in a column of \(t \ppa \tau\), and since \(t(A) \cap t(B) = \emptyset\), such an entry appears as a repeat in \(t \cdot \tau\) if and only if it has changed column.)
Thus the specified Garnir relation may be written as
\begin{subequation}
\label{eq:specified_Garnir_rel}
    \Grel_{(t, A, B)} = \sum_{\tau \in \mathcal{T}} |t \cdot \tau| \sign{\tau}.
\end{subequation}

The chosen coset representatives \(\mathcal{T}\) precisely meet the properties assumed in \S\ref{subsec:third_step_col_sort_perms}.
Thus we can define for each \(\tau \in \mathcal{T}\) a permutation 
\(\tau^\circ \in S_{\col_{j^\circ}[\lambda^\circ] \,\sqcup\, \col_k[\lambda]}\)
by, in every transposition comprising \(\tau\), replacing the box \(a\) with the unique box \((t^\circ)^{-1}t(a)\) in column \(j^\circ\) or \(k^\circ\) containing the entry \(t(a)\).
Moreover, the conditions of \Cref{prop:image_of_permuted_tabloid} are met, and so, for all \(\tau \in \mathcal{T}\),
\begin{subequation}
\label{eq:image_of_one_term}
\Psi(\act{t \ppa \tau}) = (-1)^{\surplus(t)} \act{t^\circ \ppa \tau^\circ}.
\end{subequation}

Let \(\mathcal{T}^\circ = \setbuild{\tau^\circ}{\tau \in \mathcal{T}}\).
It follows from \eqref{eq:specified_Garnir_rel} and \eqref{eq:image_of_one_term} that
\begin{subequation}
\label{eq:S_circ_corollary}
\Psi (\Grel_{(t,A,B)})
    = (-1)^{\surplus(t)}\sum_{\tau^\circ \in \mathcal{T}^\circ} \act{t^\circ \ppa \tau^\circ} \sgn\tau^\circ.
\end{subequation}
We claim next that \(\mathcal{T}^\circ\) has one of the properties required of \(\mathcal{S}\) in the definition of a Garnir relation.

\begin{subclaim}\label{subclaim:S_circ}
Excluding precisely those cosets whose place permutation actions send \(\act{t^\circ}\) to~\(0\), the set
\(\mathcal{T}^\circ\) is a complete irredundant set of left coset representatives of \(S_{A^\circ} \times S_{B^\circ}\) in \(S_{A^\circ \sqcup B^\circ}\).
\end{subclaim}

\begin{proof}
If \(\tau\), \(\theta \in \mathcal{T}\) are such that \(\tau^\circ\) and \(\theta^\circ\) represent the same coset of
\(S_{A^\circ} \times S_{B^\circ}\), then \(|t^\circ \cdot \tau^\circ| = \pm |t^\circ \cdot \theta^\circ|\). 
Using \Cref{prop:image_of_permuted_tabloid} and that \(\Psi\) is a bijection, it follows that
\(|t \cdot \tau| = \pm|t \cdot \theta|\). Since the boxes \(A \sqcup B\) 
have distinct entries in \(t\), it follows that \(\tau\) and \(\theta\) represent the same coset of \(S_A \times S_B\).
Additionally, if \(\act{t^\circ \cdot \tau^\circ} = 0\) then \(\act{t \cdot \tau} = 0\), which contradicts \(\tau \in \mathcal{T}\).
Thus distinct elements of \(\mathcal{T}^\circ\) are  representatives of distinct cosets whose place permutation
actions do not send \(\act{ t^\circ }\) to \(0\).

On the other hand, given any permutation in \(S_{A^\circ \sqcup B^\circ}\), we may choose a coset representative \(\sigma\) that can 
be written as a product of disjoint transpositions \((a\ b)\) with \(a \in A^\circ\) and \(b \in B^\circ\).
Because \(t^\circ(A^\circ)\) and \(t^\circ(B^\circ)\) are disjoint, the support of \(\sigma\) necessarily has distinct entries in \(t^\circ\).
Supposing also that the place permutation action of this coset does not send \(|t^\circ|\) to \(0\), then this representative satisfies the conditions of \S\ref{subsec:third_step_col_sort_perms}, and we may perform the construction symmetric to \(\tau \mapsto \tau^\circ\).
We thus obtain a permutation \(\tau \in S_{A \sqcup B}\)
such that \(\tau \in \mathcal{T}\) and \(\tau^\circ = \sigma\).
We conclude that \(\mathcal{T}^\circ\) is complete with the specified exclusions.
\end{proof}

It would be very convenient 
to conclude from \eqref{eq:S_circ_corollary} and \Cref{subclaim:S_circ} that \(\Psi (\Grel_{(t,A,B)}) = (-1)^{\surplus(t)} \Grel_{(t^\circ, A^\circ, B^\circ)}\), finishing the proof.
However, it may not be the case that \(\abs{A^\circ} + \abs{B^\circ} > \lambda^{\circ \prime}_{\kc^\circ}\), and
this is a requirement for \((t^\circ, A^\circ, B^\circ)\) to label a Garnir relation. 
We address this problem by expanding the subset \(A^\circ\) of \(\Col_{\kc^\circ}[\lambda^\circ]\) in a way that does not affect the resulting relation: 
adding boxes which have entries lying also in column \(\jc^\circ\) of \(t^\circ\).

Let
\begin{align*}
    N_\jc &= \setbuild{ a \in \Col_{\jc^\circ}[\lambda^\circ] }{ t^\circ(a) \in t^\circ(\Col_{\kc^\circ}[\lambda^\circ]) } \\
    N_\kc &= \setbuild{ b \in \Col_{\kc^\circ}[\lambda^\circ] }{ t^\circ(b) \in t^\circ(\Col_{\jc^\circ}[\lambda^\circ]) }
\end{align*}
be the sets of boxes in columns \(\jc^\circ\) and \(\kc^\circ\) of \(\lambda^\circ\) respectively whose entries appear 
in both columns \(\jc^\circ\) and \(\kc^\circ\) of \(t^\circ\). (Thus \(N_\jc\) and \(N_\kc\) are the analogues for \(t^\circ\)
of \(D_\jc\) and \(D_\kc\).) 
In particular, \(N_\kc\) is disjoint from \(A^\circ\) and \(N_\jc\) is disjoint from \(B^\circ\). 
These sets, and
the sets introduced in the proof of the following claim, are shown in \Cref{fig:column_subsets}.

\begin{figure}[p]
\centering

\begin{tikzpicture}[x=1cm,y=-1cm]
\usetikzlibrary{calc, arrows.meta, decorations.pathreplacing, patterns}
\newcommand{\combinexy}[2]{#1 |- #2} 

\newcommand{\nudge}{0.04}
\newcommand{\colwidth}{1.5} 
\newcommand{\setwidth}{2.5} 
\newcommand{\extravertsep}{0.3} 
\newcommand{\horizsep}{4.5}

\tikzset{
outline/.style = {
    rectangle,
    inner sep=0,
    outer sep=0,
    minimum width={\colwidth cm},
    anchor=north, 
},
columnoutline/.style = {
    outline,
    very thick,
},
compcolumnoutline/.style = {
    outline,
},
colsubset/.style = {
    draw,
    rectangle,
    rounded corners = 0.07cm,
    minimum width = {\colwidth cm - 4*\nudge cm},
    inner sep = 0pt,
    anchor=north, 
},
compcolsubset/.style ={
    colsubset,
},
entrysubset/.style = {
    draw,
    thick,
    rectangle,
    rounded corners = 0.3cm,
    minimum width = {\setwidth cm - 0*\nudge cm},
    inner sep = 0pt,
    outer sep = 0pt,
    text centered,
    anchor=north, 
    align=center,
    dashed,
},
entrysubsubset/.style = {
    entrysubset,
    thin,
    minimum width = {\setwidth cm - 4*\nudge cm},
    dotted,
},
entrysubsubsubset/.style = {
    entrysubset,
    thin,
    minimum width = {\setwidth cm - 8*\nudge cm},
    dotted,
},
}
\newcommand{\Tfixedpattern}{crosshatch dots}

\newcommand{\totaldepth}{10.6}
\newcommand{\jdepth}{6.3}
\newcommand{\kdepth}{5.4}
\pgfmathsetmacro\jcircdepth{\totaldepth-\jdepth} 
\pgfmathsetmacro\kcircdepth{\totaldepth-\kdepth} 

\newcommand{\DjnotAlength}{1.7}
\newcommand{\DknotBlength}{1.3}
\newcommand{\Dlength}{2.4}
\pgfmathsetmacro\AcapDjlength{\Dlength - \DjnotAlength} 
\pgfmathsetmacro\BcapDklength{\Dlength - \DknotBlength} 
\newcommand{\Acirclength}{2.7}
\newcommand{\Bcirclength}{2}
\pgfmathsetmacro\Ulength{\jdepth - \Dlength - \Acirclength} 
\pgfmathsetmacro\Wlength{\kdepth - \Dlength - \Bcirclength} 
\pgfmathsetmacro\Nlength{\jcircdepth - \Bcirclength - \Wlength} 



\begin{scope}[shift={(-\horizsep,0)}]

\node[draw, columnoutline, minimum height={\jdepth cm + 2*\nudge cm}, at={(0,0-\nudge)}] {};
\node[above] at (0,-\nudge) {\(\jc\)};
\node[colsubset,
    minimum height={\DjnotAlength cm - 2*\nudge cm},
    at={(0,0+\nudge)}] 
    (DjnotAset)
    {\(D_\jc \setminus A\)};
\node[colsubset,
    minimum height={\Dlength cm - \DjnotAlength cm- 2*\nudge cm},
    pattern=\Tfixedpattern,
    pattern color=paleazure,
    at={(0,\DjnotAlength+\nudge)}]
    (AcapDjset)
    {\(A \cap D_\jc\)};
\node[colsubset,
    minimum height={\Acirclength cm - 2*\nudge cm},
    fill=paleazure,
    at={(0,\Dlength+\nudge)}] 
    (AnotDjset)
    {\(A \setminus D_\jc\)};
\node[colsubset,
    minimum height={\Ulength cm -2*\nudge cm},
    at={(0,\Dlength+\Acirclength+\nudge)}]
    (Uset)
    {\(U\)};

\begin{scope}[shift={(0,\jdepth)}] 
\begin{scope}[shift={(0,\extravertsep)}] 
\node[draw, compcolumnoutline, minimum height={\jcircdepth cm + 2*\nudge cm}, at={(0,0-\nudge)}] {};
\node[below] at (0,\jcircdepth+\nudge) {\(\jc^\circ\)};
\node[compcolsubset,
    minimum height={\Bcirclength cm - 2*\nudge cm},
    fill=palegold,
    at={(0,0+\nudge)}]
    (Bcircset)
    {\(B^\circ\)};
\node[compcolsubset,
    minimum height={\Wlength cm - 2*\nudge cm},
    at={(0,\Bcirclength+1*\nudge)}]
    (Wcircset)
    {\(W^\circ\)};
\node[compcolsubset,
    minimum height={\Nlength cm - 2*\nudge cm},
    at={(0,\Bcirclength+\Wlength+1*\nudge)}]
    (Njset)
    {\(N_\jc\)};
\end{scope} 
\end{scope} 

\end{scope} 

\begin{scope}[shift={(\horizsep,0)}]

\node[draw, columnoutline, minimum height={\kdepth cm + 2*\nudge cm}, at={(0,0-\nudge)}] {};
\node[above] at (0,-\nudge) {\(\kc\)};
\node[colsubset,
    minimum height={\DknotBlength cm - 2*\nudge cm},
    at={(0,0+1*\nudge)}]
    (DknotBset)
    {\(D_\kc \setminus B\)};
\node[colsubset,
    minimum height={\Dlength cm - \DknotBlength cm - 2*\nudge cm},
    pattern=\Tfixedpattern,
    pattern color=palegold,
    at={(0, \DknotBlength+\nudge)}]
    (BcapDkset)
    {\(B \cap D_\kc\)};
\node[colsubset,
    minimum height={\Bcirclength cm - 2*\nudge cm},
    fill=palegold,
    at={(0,\Dlength+\nudge)}]
    (BnotDkset)
    {\(B \setminus D_\kc\)};
\node[colsubset,
    minimum height={\Wlength cm - 2*\nudge cm},
    at={(0,\Dlength+\Bcirclength+\nudge)}]
    (Wset)
    {\(W\)};

\begin{scope}[shift={(0,\kdepth)}] 
\begin{scope}[shift={(0,\extravertsep)}] 
\node[draw, compcolumnoutline, minimum height={\kcircdepth cm + 2*\nudge cm}, at={(0,0-\nudge)}] {};
\node[below] at (0,\kcircdepth+\nudge) {\(\kc^\circ\)};
\node[compcolsubset,
    minimum height={\Acirclength cm - 3*\nudge cm},
    fill=paleazure,
    at={(0,0+1*\nudge)}]
    (Acircset)
    {\(A^\circ\)};
\node[compcolsubset,
    minimum height={\Ulength cm - 2*\nudge cm},
    at={(0,\Acirclength+1*\nudge)}]
    (Uicircset)
    {\(U^\circ\)};
\node[compcolsubset,
    minimum height={\Nlength cm - 3*\nudge cm},
    at={(0,\Acirclength+\Ulength+2*\nudge)}]
    (Nkset)
    {\(N_\kc\)};
\end{scope} 
\end{scope} 

\end{scope} 

{
\node[above] at (0,-2*\nudge) {\(\set{1, \ldots, d}\)};

\begin{scope}[shift={(0,-1.5*\nudge)}] 

\node[entrysubset,
    minimum height={\Dlength cm - 0*\nudge cm},
    at={(0,0)}]
    (bothcol) {};
    \node[entrysubsubset,
        minimum height={\BcapDklength cm - 3*\nudge cm},
        pattern=\Tfixedpattern,
        pattern color=palegold,
        at={(0,0+2*\nudge)}]
        (tofBcapDkentryset)
        {%
        \(\scriptstyle t(B \cap D_\kc)\)%
        };
    \node[entrysubsubset,
        minimum height={\DjnotAlength cm - \BcapDklength cm - 2*\nudge cm}, 
        at={(0,\BcapDklength+\nudge)}]
        {%
            \(\scriptscriptstyle t(D_\jc \setminus A) \,\cap\, t(D_\kc \setminus B)\)%
        };
    \node[entrysubsubset,
        minimum height={\AcapDjlength cm - 3*\nudge cm},
        pattern=\Tfixedpattern,
        pattern color=paleazure,
        at={(0,\DjnotAlength+\nudge)}]
        (tofAcapDjentryset)
        {%
        \(\scriptstyle t(A \cap D_\jc)\)%
        };
    \node[entrysubsubset, draw=none,
        minimum height={\DjnotAlength cm - 3*\nudge cm},
        at={(0,0+3*\nudge)}]
        (tofDjnotAentryset) {};
    \node[entrysubsubset, draw=none,
        minimum height={\DknotBlength cm - 4*\nudge cm},
        at={(0,\BcapDklength+2.5*\nudge)}]
        (tofDknotBentryset) {};
\begin{scope}[shift={(0,\extravertsep*0.33+1*\nudge)}] 
\begin{scope}[shift={(0,\Dlength)}] 
\node[entrysubset,
    minimum height={\Acirclength cm + \Ulength cm - 0*\nudge cm}, 
    at={(0,0)}]
    (jcol) {};
    \node[entrysubsubset,
        fill=paleazure,
        minimum height={\Acirclength cm - 3*\nudge cm},
        at={(0,0+2*\nudge)}]
        (tofAnotDjentryset)
        {%
        \(t(A \setminus D_\jc)\) \\[-4pt]
        \(\scriptstyle=\) \\[-3pt]
        \( t^\circ (A^\circ)\)%
        };
    \node[entrysubsubset,
        minimum height={\Ulength cm - 3*\nudge cm}, 
        at={(0,\Acirclength+\nudge)}]
        (tofUentryset)
        {%
        \(\scriptstyle t(U) \,=\, t^\circ (U^\circ)\)%
        };
\end{scope} 
\begin{scope}[shift={(0,\extravertsep*0.33+1*\nudge)}] 
\begin{scope}[shift={(0,\Dlength+\Acirclength+\Ulength)}] 
\node[entrysubset,
    minimum height={\Wlength cm+\Bcirclength cm - 0*\nudge cm},
    at={(0,0)}] (kcol) {};
    \node[entrysubsubset,
        fill=palegold,
        minimum height={\Bcirclength cm - 3*\nudge cm},
        at={(0,0+2*\nudge)}]
        (tofBnotDkentryset)
        {%
        \(t(B \setminus D_\kc)\) \\[-4pt]
        \(\scriptstyle=\) \\[-3pt]
        \( t^\circ (B^\circ)\)%
        };
    \node[entrysubsubset,
        minimum height={\Wlength cm - 3*\nudge cm},
        at={(0,\Bcirclength+1*\nudge)}]
        (tofWentryset)
        {%
        \(\scriptstyle t(W) \,=\, t^\circ (W^\circ)\)%
        };
\end{scope} 
\begin{scope}[shift={(0,\extravertsep*0.33+1*\nudge)}] 
\begin{scope}[shift={(0,\Dlength+\Wlength+\Bcirclength+\Acirclength+\Ulength)}] 
\node[entrysubset,
    minimum height={\Nlength cm - 0*\nudge cm},
    at={(0,0)}]
    (neithercol)
    {
    \(\scriptstyle t^\circ(N_\jc) \,=\, t^\circ(N_\kc)\)%
    };
\end{scope} 
\end{scope} 
\end{scope} 
\end{scope} 

\end{scope} 
}

\draw[thick, dashed, rounded corners]
    ($(bothcol.north west)+(0.3,0)$) coordinate(temp)
    -- (\combinexy{temp}{0,-1.1})
    node[shift={(-1.3,-0.3)}, inner sep=0]
    {\scalebox{0.9}{\(t(D_\jc) = t(D_\kc) = \col_\jc(t) \cap \col_\kc(t)\)}};
\draw[thick, dashed, rounded corners]
    ($(jcol.north east)+(0,0.6)$)
    --++ (1,0) coordinate(temp)
    -- (\combinexy{temp}{0,-1.1})
    node[shift={(0.05,-0.3)}, inner sep=0]
    {\scalebox{0.9}{\(\col_\jc(t)\,\backslash\, \col_\kc(t)\)}};
\draw[thick, dashed, rounded corners]
    ($(kcol.south west)+(0,-1)$)
    --++ (-0.8,0) coordinate(temp)
    -- (\combinexy{temp}{0,\totaldepth+1.1})
    node[shift={(-0.05,0.3)}, inner sep=0]
    {\scalebox{0.9}{\(\col_\kc(t) \,\backslash\, \col_\jc(t)\)}};
\draw[thick, dashed, rounded corners]
    ($(neithercol.south east)+(-0.3,0)$) coordinate(temp)
    -- (\combinexy{temp}{0,\totaldepth+1.1})
    node[shift={(0.7,0.3)}, inner sep=0]
    {\scalebox{0.9}{\(\set{1,\ldots, d} \,\backslash\, \bigl(  \Col_\jc(t)  \cup \Col_\kc(t) \bigr)\)}};

\newcommand{\arrowgap}{0.25}
\newcommand{\bracegap}{0.25}

\newcommand{\straightLtoRarrow}[3]{
\draw[|->, semithick]
    ($(#1.east) + (\arrowgap,0)$)
    -- node[above] {\(#2\)}
    (\combinexy{-\setwidth*0.5+\nudge-\arrowgap+#3,0}{#1});
}
\newcommand{\straightRtoLarrow}[3]{
\draw[|->, semithick]
    ($(#1.west) + (-\arrowgap,0)$)
    -- node[above] {\(#2\)}
    (\combinexy{\setwidth*0.5-\nudge+\arrowgap+#3,0}{#1});
}
\newcommand{\flexiRtoLarrow}[5]{
\draw[|->, semithick]
    ($(#1.west) + (-\arrowgap,0)$)
    to[out=180, in=0, looseness=0.5] node[pos=0.15, #5=0.1cm] {\(#2\)}
    ($(#4.east)+(\arrowgap+#3,0)$);
}

\draw[decorate,decoration={brace,amplitude=6pt,raise=4pt}, semithick] 
    (tofDjnotAentryset.south west)
     -- (tofDjnotAentryset.north west);
\straightLtoRarrow{DjnotAset}{t}{-\bracegap}
\straightLtoRarrow{AcapDjset}{t}{0}
\straightLtoRarrow{AnotDjset}{t}{0}
\straightLtoRarrow{Uset}{t}{0}

\straightLtoRarrow{Bcircset}{t^\circ}{0}
\straightLtoRarrow{Wcircset}{t^\circ}{0}
\straightLtoRarrow{Njset}{t^\circ}{0}

\draw[decorate,decoration={brace,amplitude=6pt,raise=4pt}, semithick] 
    (tofDknotBentryset.north east)
     -- (tofDknotBentryset.south east);
\flexiRtoLarrow{DknotBset}{t}{\bracegap}{tofDknotBentryset}{above}
\flexiRtoLarrow{BcapDkset}{t}{0}{tofBcapDkentryset}{below}
\flexiRtoLarrow{BnotDkset}{t}{0}{tofBnotDkentryset}{above}
\flexiRtoLarrow{Wset}{t}{0}{tofWentryset}{above}
    
\flexiRtoLarrow{Acircset}{t^\circ}{0}{tofAnotDjentryset}{below}
\flexiRtoLarrow{Uicircset}{t^\circ}{0}{tofUentryset}{below}
\straightRtoLarrow{Nkset}{t^\circ}{0}

\end{tikzpicture}
\caption{The sets of boxes and their entries considered in the proof of \Cref{prop:Psi_respects_Garnir_relations}.
Column \(\jc\) of \(\Y{\lambda}\) and column \(\jc^\circ\) of \(\Y{\lambda^\circ}\) are shown on the left, column \(\kc\) of \(\Y{\lambda}\) and column \(\kc^\circ\) of \(\Y{\lambda^\circ}\) are shown on the right, and the set \(\{1,\ldots, d\}\) containing their entries is shown in the middle.
The solid shading indicates the boxes, and their entries, that may be moved by elements of \(\mathcal{T}\); the dotted shading indicates the boxes, and their entries, which lie in \(A \sqcup B\) but which are fixed by \(\mathcal{T}\). 
\Cref{subclaim:bigEnough} states that the number of solidly shaded entries, plus \(|N_\kc|\), is strictly more than \(\lambda_{\kc^\circ}^{\circ'}\).
The sets \(W = \Col_\kc[\lambda] \backslash (B \cup D_\kc)\) and \(W^\circ = \setbuild{a \in \Col_{\jc^\circ}[\lambda^\circ] }{ t(a) \in t(W) }\) are defined analogously to the sets of boxes \(U\) and \(U^\circ\) used in the proof of \Cref{subclaim:bigEnough}; they are indicated here only in order to complete the partition and are not used in the proof.
\vspace*{24pt}
}
\label{fig:column_subsets}
\end{figure}

\begin{subexample}
\marginnote{
\begin{center}
\begin{tikzpicture}[x=\xlen cm, y=-\ylen cm]
\fill[paleazure] (0+\fb,2+\fb) rectangle (1-\fb,3-\fb);
\fill[paleazure] (0+\fb,3+\fb) rectangle (1-\fb,4-\fb);
\fill[paleazure] (0+\fb,4+\fb) rectangle (1-\fb,5-\fb);
\fill[palegold] (1+\hgap+\fb,0+\fb) rectangle (1+\hgap+1-\fb,1-\fb);
\fill[palegold] (1+\hgap+\fb,1+\fb) rectangle (1+\hgap+1-\fb,2-\fb);
\fill[palegold] (1+\hgap+\fb,2+\fb) rectangle (1+\hgap+1-\fb,3-\fb);
    \node at (0.5,-0.5) {\(j\)};                        \node at (1+\hgap+0.5,-0.5) {\(k\)};
\drawthreesides{0}{0}                               \drawthreesides{1+\hgap}{0}
                                                        \node at ($(1+\hgap,0)+(\DNshift)$) {\DNstyle{D_k}};
    \node at (0.5,0.5) {\(1\)};                         \node at (1+\hgap+0.5,0.5) {\(2\)};
                                                        \node[vdarkgold] at ($(1+\hgap,0)+(\ABshift)$) {\ABstyle{B}};
\drawthreesides{0}{1};                              \drawthreesides{1+\hgap}{1}
    \node at ($(0,1)+(\DNshift)$) {\DNstyle{D_j}};
    \node at (0.5,1.5) {\(2\)};                         \node at (1+\hgap+0.5,1.5) {\(3\)};
                                                        \node[vdarkgold] at ($(1+\hgap,1)+(\ABshift)$) {\ABstyle{B}};
\drawthreesides{0}{2};                              \drawthreesides{1+\hgap}{2}
    \node at ($(0,2)+(\DNshift)$) {\DNstyle{D_j}};
    \node at (0.5,2.5) {\(6\)};                         \node at (1+\hgap+0.5,2.5) {\(5\)};
    \node[darkazure] at ($(0,2)+(\ABshift)$) {\ABstyle{A}};\node[vdarkgold] at ($(1+\hgap,2)+(\ABshift)$) {\ABstyle{B}};
\drawthreesides{0}{3};                              \drawfoursides{1+\hgap}{3}
                                                        \node at ($(1+\hgap,3)+(\DNshift)$) {\DNstyle{D_k}};
    \node at (0.5,3.5) {\(8\)};                         \node at (1+\hgap+0.5,3.5) {\(6\)};
    \node[darkazure] at ($(0,3)+(\ABshift)$) {\ABstyle{A}};
\drawfoursides{0}{4};
    \node at (0.5,4.5) {\(9\)};
    \node[darkazure] at ($(0,4)+(\ABshift)$) {\ABstyle{A}};
\begin{scope}[shift={(0,\vgap)}] 
\fill[palegold] (0+\fb,6+\fb) rectangle (1-\fb,7-\fb);
\fill[palegold] (0+\fb,8+\fb) rectangle (1-\fb,9-\fb);
\fill[paleazure] (1+\hgap+\fb,4+\fb) rectangle (1+\hgap+1-\fb,5-\fb);
\fill[paleazure] (1+\hgap+\fb,5+\fb) rectangle (1+\hgap+1-\fb,6-\fb);
\fill[mygrey] (1+\hgap+\fb,6+\fb) rectangle (1+\hgap+1-\fb,7-\fb);
\fill[mygrey] (1+\hgap+\fb,7+\fb) rectangle (1+\hgap+1-\fb,8-\fb);
                                                    \drawthreesides{1+\hgap}{4}
                                                        \node at (1+\hgap+0.5,4.5) {\(9\)};
                                                        \node[darkazure] at ($(1+\hgap,4)+(\ABshift)$) {\ABstyle{A^\circ}};
\drawthreesides{0}{5};                              \drawthreesides{1+\hgap}{5}
    \node at ($(0,5)+(\DNshift)$) {\DNstyle{N_j}};
    \node at (0.5,5.5) {\(7\)};                         \node at (1+\hgap+0.5,5.5) {\(8\)};
                                                        \node[darkazure] at ($(1+\hgap,5)+(\ABshift)$) {\ABstyle{A^\circ}};
\drawthreesides{0}{6};                              \drawthreesides{1+\hgap}{6};
    \node at (0.5,6.5) {\(5\)};                         \node at (1+\hgap+0.5,6.5) {\(7\)};
                                                        \node at ($(1+\hgap,6)+(\DNshift)$) {\DNstyle{N_k}};
    \node[vdarkgold] at ($(0,6)+(\ABshift)$) {\ABstyle{B^\circ}};
\drawthreesides{0}{7};                              \drawthreesides{1+\hgap}{7};
    \node at ($(0,7)+(\DNshift)$) {\DNstyle{N_j}};      \node at ($(1+\hgap,7)+(\DNshift)$) {\DNstyle{N_k}};
    \node at (0.5,7.5) {\(4\)};                         \node at (1+\hgap+0.5,7.5) {\(4\)};
\drawfoursides{0}{8};                               \drawfoursides{1+\hgap}{8};
    \node at (0.5,8.5) {\(3\)};                         \node at (1+\hgap+0.5,8.5) {\(1\)};
    \node[vdarkgold] at ($(0,8)+(\ABshift)$) {\ABstyle{B^\circ}};
    \node at (0.5,9.5) {\(j^\circ\)};                   \node at (1+\hgap+0.5,9.5) {\(k^\circ\)};
\end{scope}
\end{tikzpicture}
\end{center}
}[-7.5cm]%
In the example shown in the margin, now with full annotations, 
\(A^\circ = \bigl\{(4,\kc^\circ), (5,\kc^\circ) \bigr\}\) and \(B^\circ = \bigl\{ (1,\jc^\circ), (3,\jc^\circ) \bigr\}\) so \(|A^\circ| + |B^\circ| = 4 \not> \lambdacircprime_{\kc^\circ} = 5\).
Therefore \(A^\circ\) and \(B^\circ\) cannot be used directly to define a Garnir relation. 
We have \(N_\kc = \bigl\{(2,k^\circ), (3,\kc^\circ) \bigr\}\), in bijection with \(N_\jc = \bigl\{(2,\jc^\circ), (4,\jc^\circ) \bigr\}\), and \(|A^\circ \sqcup N_\kc| + |B^\circ| = 6\).
Therefore \(A^\circ \sqcup N_\kc\) and~\(B^\circ\) define a Garnir relation.
The relevant boxes are shaded in the margin. 
By \Cref{subclaim:Psi} at the end of this proof, 
\(\Psi(\Grel_{(t,A,B)}) = (-1)^{\surplus(t)} \Grel_{(t^\circ,A^\circ\sqcup N_\kc, B^\circ)}\).
\end{subexample}

\begin{subclaim}\label{subclaim:bigEnough}
\(\abs{A^\circ \sqcup N_\kc} + \abs{B^\circ} > \lambdacircprime_{\kc^\circ}\).
\end{subclaim}

\begin{proof}
Let \(U = \Col_\jc[\lambda] \backslash (A \cup D_\jc)\),
and let \(U^\circ = \setbuild{b \in \Col_{\kc^\circ}[\lambda^\circ] }{ t^\circ(b) \in t(U) }\).
Since each entry in column \(k^\circ\) of \(t^\circ\) is,
by construction, not in column~\(\kc\) of~\(t\), and either (i) in~\(A\) and not in both columns of \(t\), so in \(A^\circ\);
(ii) not in~\(A\) but in column \(\jc\) of \(t\), so in \(U^\circ\); (iii) not in~\(A\) and not in column \(\jc\) of~\(t\),
so in \(N_k\), we have
\(\lambdacircprime_{\kc^\circ} = |A^\circ| + |N_k| + |U^\circ|\).
(This can be seen from the partition of column \(k^\circ\) in \Cref{fig:column_subsets}.)
Therefore the claim is equivalent to
\begin{align*}
|A^\circ| + |N_k| + |B^\circ| > \lambdacircprime_{\kc^\circ} &\iff |B^\circ| > |U^\circ| \\
&\iff |B| > |U^\circ| + |B| - |B^\circ| \\
&\iff |B| > |U| + |B \cap D_k|
\end{align*}
where the final line holds because \(U^\circ\) is in bijection, via \(t^\circ\) and \(t\), with \(U\) and, by~\eqref{eq:ABcirc},
\(t^\circ\) defines a bijection from \(B^\circ\) to \(B \backslash D_k\), giving \(|B \cap D_k| = |B| - |B^\circ|\).
(Both bijections can  be seen in \Cref{fig:column_subsets}.)
Now \(|B \cap D_k| \leq |D_j \backslash A|\) since each entry of \(B \cap D_k\) is in both column
\(\kc\) and column \(\jc\) of \(t\), and so 
is in \(D_\jc\), but these entries 
are not in~\(A\), since \(t(A) \cap t(B) = \varnothing\). (This can be seen in Figure~1 by following the arrow \(t\) from \(B \cap D_k\).)
Therefore it is sufficient to prove that
\[ |B| > |U| + |D_j \backslash A|. \]
This holds because \(|A| + |B| > \lambda_j'\), and so \(|B|\) is strictly more than the number of entries in column \(j\) of \(t\)
not in \(A\); these are precisely the entries in the boxes in \(U\) and \(D_j \backslash A\).
\end{proof}

\begin{subclaim}\label{subclaim:Psi}
\(\Psi (\Grel_{(t,A,B)}) = (-1)^{\surplus(t)} \Grel_{(t^\circ, A^\circ \sqcup N_\kc, B^\circ )}\).
\end{subclaim}

\begin{proof}
Let \(\mathcal{R}\) be a set of left coset representatives for \(S_{A^\circ \sqcup N_\kc} \times S_{B^\circ}\) in \(S_{A^\circ \sqcup N_\kc \sqcup B^\circ}\), chosen so that each representative that keeps all the boxes in~\(N_\kc\) in column \(\kc^\circ\) fixes all these boxes.
Let \(\mathcal{Q} \subseteq \mathcal{R}\) be this set of representatives fixing all the boxes in \(N_\kc\);
then \(\mathcal{Q}\) forms a complete irredundant set of left coset representatives of \(S_{A^\circ} \times S_{B^\circ}\) in \(S_{A^\circ\sqcup B^\circ}\).
By \Cref{subclaim:S_circ} we have
\(
     \sum_{\sigma \in \mathcal{Q}} |t^\circ \cdot \sigma| \sign{\sigma} 
     = 
     \sum_{\tau^\circ \in \mathcal{T}^\circ} |t^\circ \cdot \tau^\circ| \sign{\tau^\circ} .
\)
Thus
\begin{align*}
\Grel_{(t^\circ, A^\circ \sqcup N_\kc, B^\circ )}
    &=  \sum_{\sigma \in \mathcal{R} \backslash \mathcal{Q}} |t^\circ \cdot \sigma| \sign{\sigma} 
        + \sum_{\tau^\circ \in \mathcal{T}^\circ} |t^\circ \cdot \tau^\circ| \sign{\tau^\circ} .
\end{align*}
Each summand \(|t^\circ \cdot \sigma|\) in the first sum is \(0\), because \(\sigma\) moves a box containing
an entry in \(N_\kc\) into column \(\jc\), in which this entry is already contained in a box in \(N_\jc\).
By~\eqref{eq:S_circ_corollary} the second summand is \((-1)^{\surplus(t)} \Psi(\Grel_{(t,A,B)})\), as required.
\end{proof}

We thus have \(\Psi (\Grel_{(t,A,B)}) \in \GRspace{\lambda^\circ}{V^\dual}\), completing the proof.
\end{proof}

To complete the proof of \Cref{thm:comp_partition_iso} we 
need that \(\nabla^\lambda V\) and \(\nabla^{\lambda^\circ}V^\dual\) have the same dimension.
This result is well known: it is proved, for instance, in 
Proposition~7.1 in \cite{PagetWildonSL2}, where it is shown that
the map \(t \mapsto t^\circ\) is a bijection \(\SSYT_{\{1,\ldots,d\}}(\lambda) \rightarrow \SSYT_{\{1,\ldots,d\}}(\lambda^\circ)\),
and so the dimensions agree by \Cref{prop:ssytBasis}.

We can now prove the main results of this section.

\begin{proof}[Proof of \Cref{thm:comp_partition_iso}]
The \(KG\)-isomorphism \(\Psi \colon \Wedge^{\lambda'}\!V \to \Wedge^{\lambdacircprime}\!V^\dual \otimes (\det V)^s\) induces, by \Cref{prop:complement}, a homomorphism
\(\GRspace{\lambda}{V} \to \GRspace{\lambda^\circ}{V^\dual}\).
Hence, by \Cref{prop:nablaPresentation}, \(\Psi\) induces a surjective \(KG\)-homomorphism \(\nabla^\lambda V \rightarrow \nabla^{\lambda^\circ} V^\dual \otimes (\det V)^s\).
Since these modules have the same dimension, \(\Psi\) is an isomorphism.
\end{proof}

\comppartisoSL*

\begin{proof}
Take \(G = \SL_2(K)\), \(d = l+1\) and \(V = \Sym^{l} E\) in \Cref{thm:comp_partition_iso}.
Since \(V\) is the restriction of a polynomial representation of \(\GL_2(K)\), \(\det V\) is the
trivial representation.
By the discussion following \Cref{prop:SchurContravariantDual} and \Cref{lemma:SL2duality},
we have \((\Sym^{l} \! E)^\dual \cong \Sym_{l} E\). The corollary follows.
\end{proof}

\section{Wronskian isomorphism (proof of \Cref{thm:Wronskian})}
\label{sec:Wronskian}

This section consists of a proof of \Cref{thm:Wronskian}, restated below.

\Wronskian*

The proof is split into two subsections: the first shows that this map is a \(K\)-linear isomorphism, the second that it respects the group action.

\subsection{A \texorpdfstring{\(K\)}{K}-linear isomorphism}

We introduce some notation to describe the given map and show that it is a \(K\)-linear isomorphism \(\Sym_m \Sym^\ell\! E \to \Wedge^m \Sym^{\ell+m-1}\! E\).
As in the introduction, we write \(E = \langle X, Y \rangle_K\).
Thus for each \(r \in \mathbb{N}\), by~\eqref{eq:symUpperBasis},
\(\Sym^r E\) has a basis \(\set{Y^r, XY^{r-1}, \ldots, X^r}\).

An \(m\)-\emph{multiindex} is an element of \(\Z^m\). 
The symmetric group \(S_m\) acts on \(m\)-multiindices by place permutation:
\((i_1,\ldots,i_m) \cdot \sigma = (i_{1\sigma^{-1}}, \ldots, i_{m\sigma^{-1}})\). 
Let \(\Stab \ix \leq S_m\) denote the stabiliser of the \(m\)-multiindex \(\ix\).
Let \(\dx = (m-1,m-2,\ldots, 0)\).
Addition and subtraction of \(m\)-multiindices is defined componentwise.

\begin{definition}
Given an \(m\)-multiindex \(\ix\) with entries from \(\set{0,\ldots, \ell}\), define
\begin{align*}
    \Ftensor{\ix} &= X^{i_1}Y^{\ell-i_1} \otimes \cdots \otimes X^{i_m}Y^{\ell-i_m}; \\
    \Fsym{\ix} &= \textstyle{\sum_{\sigma \in \Stab \ix \backslash S_m} \Ftensor{\ix \cdot \sigma}};
\intertext{and given an \(m\)-multiindex \(\jx\) with entries from \(\set{0,\ldots, \ell+m-1}\), define}
    \Fwedge{\jx} &= X^{j_1}Y^{\ell+m-1-j_1} \wedge \cdots \wedge X^{j_m}Y^{\ell+m-1-j_m}.
\end{align*}
\end{definition}

By our definition of \(\Sym_m \Sym^\ell\bs V\) as the fixed points for the action of~\(S_m\) on \((\Sym^\ell\bs V)^{\otimes m}\),
this module has as a basis all \(\Fsym{\ix}\) for weakly decreasing \(\ix \in \{0,\ldots, \ell\}^m\);
let \(\Mdecreasing\) be the set of such multiindices.
Thus we can write the restriction to \(\Sym_m \Sym^l E\) of the map in the statement of \Cref{thm:Wronskian} as follows.

\begin{definition}
\label{def:zeta}
Let \(\zeta \colon \Sym_m \Sym^\ell\bs E \rightarrow \bigwedge^m \Sym^{\ell+m-1}\bs E\) be the \(K\)-linear map
defined by linear extension of
\[
\zeta \Fsym{\ix}
    = \longsubsum{\sum}{\sigma \in \Stab \ix \backslash S_m}{ \Fwedge{\ix \cdot \sigma + \mathbf{d}}}
\]
for \(\ix \in \Mdecreasing\).
\end{definition}

\newcommand{\firstcolwidest}{X^5 \crampedwedge X^2Y^3 \crampedwedge XY^4}
\newcommand{\secondcolwidest}{X^4Y \crampedwedge X^3Y^2 \crampedwedge XY^4}
\newcommand{\thirdcolwidest}{XY^2 \crampedtensor XY^2 \crampedtensor X^3}

\begin{example}\label{ex:zeta} Take \(m = \ell = 3\). Omitting some parentheses for readability, we have
\begin{align*}
\Fsym{3,1,1}
    &= \flexbox{\Ftensor{3,1,1}}{\firstcolwidest}
        + \flexbox{\Ftensor{1,3,1}}{\secondcolwidest}
        \!\!+ \flexbox{\Ftensor{1,1,3}}{XY^4 \crampedwedge X^2Y^3 \crampedwedge X^5} \\
    &= \flexbox{X^3 \crampedtensor XY^2 \crampedtensor XY^2}{\firstcolwidest}
        + \flexbox{XY^2 \crampedtensor X^3 \crampedtensor XY^2}{\secondcolwidest}
        \!\!+ XY^2 \crampedtensor XY^2 \crampedtensor X^3
\intertext{and, since \(\dx = (2,1,0)\),}
\zeta \Fsym{3,1,1}
    &= \flexbox{\Fwedge{5,2,1}}{\firstcolwidest}
        + \flexbox{\Fwedge{3,4,1}}{\secondcolwidest}
        \!\!+ \flexbox{\Fwedge{3,2,3}}{\thirdcolwidest} \\
    &= \flexbox{\Fwedge{5,2,1}}{\firstcolwidest}
        - \flexbox{\Fwedge{4,3,1}}{\secondcolwidest} \\
    &= X^5 \crampedwedge X^2Y^3 \crampedwedge XY^4
        - X^4Y \crampedwedge X^3Y^2 \crampedwedge XY^4
\end{align*}
where we have aligned the summands obtained by addition of \(\dx\) to the index.
\end{example}

\begin{remark}
\label{remark:extended_domain_of_zeta}
In our notation, the map $\mleft( \Sym^\ell \! E \mright)^{\tensor m} \to \Wedge^m \Sym^{\ell+m-1}\! E$ in the statement of \Cref{thm:Wronskian} is defined by
\[
    \Ftensor{\ix} \mapsto \Fwedge{\ix + \mathbf{d}}
\]
for all \(\ix \in \set{0, \ldots l}^m\).
This \(K\)-linear map will be useful in the following subsection when computing with \(\zeta\).
However, this extended map \emph{does not} respect the group action.
For example, let \(\ell = m = 2\), so \(\dx = (1,0)\), and let \(\ix = (1,2)\).
Then the extended map sends \(\Ftensor{\ix} = XY \otimes X^2\) to \(\Fwedge{\ix + \dx} = X^2Y \wedge X^2Y = 0\).
But choosing \(J = \begin{psmallmatrix}
            0 & 1 \\[1pt] -1 & 0 \\
\end{psmallmatrix} \in \SL_2(k) \), we have \(J \Ftensor{\ix} = - XY \tensor Y^2\), which is 
sent by the extended map to \(- X^2Y \wedge Y^3 \neq 0\).
\end{remark}

We totally order multiindices of a given length lexicographically comparing entries starting at the left. 
For example  \((3,2,2) < (3,3,1) < (4,2,1)\).

\begin{lemma}
The \(K\)-linear map \(\zeta\) is injective.
\end{lemma}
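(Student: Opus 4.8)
The idea is to read off a triangular (indeed unitriangular) shape for $\zeta$ with respect to the lexicographic order on multiindices and then apply the usual leading-term argument. The domain $\Sym_m \Sym^\ell\! E$ has basis $\setbuild{\Fsym{\ix}}{\ix \in \Mdecreasing}$, while $\Wedge^m \Sym^{\ell+m-1}\! E$ has as a basis the set $\setbuild{\Fwedge{\jx}}{\jx \in \set{0,\ldots,\ell+m-1}^m \text{ strictly decreasing}}$ (these being, up to the sign incurred by reordering a wedge, the standard basis elements of the exterior power). Since $\ix$ is weakly decreasing and $\dx = (m-1,\ldots,0)$ is strictly decreasing, $\ix + \dx$ is strictly decreasing with entries in $\set{0,\ldots,\ell+m-1}$, so $\ix \mapsto \ix + \dx$ is an injection from $\Mdecreasing$ into the index set of the codomain basis which preserves the lexicographic order, comparison being entrywise from the left. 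The first goal is to prove that, expanded in the codomain basis, $\zeta\Fsym{\ix}$ equals $\Fwedge{\ix+\dx}$ plus a $K$-linear combination of basis elements $\Fwedge{\jx}$ with $\jx < \ix + \dx$.

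The crux, which I expect to be the main obstacle, is the following claim: for $\ix \in \Mdecreasing$ and $\sigma \in S_m$ such that $\ix \cdot \sigma + \dx$ has no repeated entry, the strictly decreasing rearrangement of $\ix \cdot \sigma + \dx$ is $\leq \ix + \dx$ lexicographically, with equality if and only if $\ix \cdot \sigma = \ix$. Granting this, the triangular shape is immediate from $\zeta\Fsym{\ix} = \sum_{\sigma \in \Stab \ix \backslash S_m} \Fwedge{\ix \cdot \sigma + \dx}$: the identity coset contributes exactly $\Fwedge{\ix+\dx}$ with coefficient $1$ (no reordering is needed), every other coset contributes either $0$ (a repeated entry) or $\pm\Fwedge{\jx}$ with $\jx < \ix + \dx$, and the ``equality iff'' part guarantees that no other coset contributes to $\Fwedge{\ix + \dx}$.

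I would prove the claim by induction on $m$, the case $m = 1$ being trivial. Write $\pi = \ix \cdot \sigma$, a rearrangement of the entries of the weakly decreasing $\ix$. If $\pi_1 \neq i_1$, then $\pi_1 < i_1$, and each entry $\pi_k + (m-k)$ of $\pi + \dx$ is at most $i_1 + (m-1) = (\ix+\dx)_1$, with equality forcing $k = 1$ and $\pi_1 = i_1$; hence the largest entry of $\pi + \dx$ is strictly below $(\ix+\dx)_1$, giving the strict inequality. If $\pi_1 = i_1$, then this common value $i_1 + (m-1)$ is strictly the largest entry of $\pi + \dx$, so the decreasing rearrangement of $\pi + \dx$ starts with it, while $(\pi_2,\ldots,\pi_m)$ is a rearrangement of the weakly decreasing $(i_2,\ldots,i_m)$ whose associated shift $(m-2,\ldots,0)$ is precisely the analogue of $\dx$ for length $m-1$; the inductive hypothesis applied to those coordinates, together with comparison from the left, completes the step. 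Finally the injectivity of $\zeta$ follows routinely: given $\sum_{\ix} c_\ix \Fsym{\ix} \in \ker \zeta$ with some $c_\ix \neq 0$, pick $\ix$ lexicographically maximal with $c_\ix \neq 0$ and extract the coefficient of $\Fwedge{\ix + \dx}$ from $\sum_{\ix} c_\ix \zeta\Fsym{\ix}$; by the triangular shape and the order-preserving injectivity of $\ix \mapsto \ix + \dx$, only the term for this $\ix$ contributes, forcing $c_\ix = 0$, a contradiction.
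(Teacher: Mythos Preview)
Your proof is correct and follows essentially the same route as the paper: both arguments establish unitriangularity of $\zeta$ with respect to the lexicographic order, via the key claim that the decreasing rearrangement of $\ix\cdot\sigma+\dx$ is $\leq \ix+\dx$ with equality exactly when $\sigma\in\Stab\ix$, and then conclude by the standard leading-term argument. The only difference is that you supply an inductive proof of this key inequality, whereas the paper simply asserts it.
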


\begin{proof}
Let \(x = \sum_{\ix \in \Mdecreasing} \alpha_\ix \Fsym{\ix} \in \Sym_m \Sym^l E\) be non-zero.
There is some non-zero \(\alpha_\ix \in K\);
choose \(\mx\) maximal with \(\alpha_{\mx} \not= 0\).
Note that by~\eqref{eq:symUpperBasis}, a basis for \(\Wedge^m \Sym^{m+l-1} E\) is
\(
\setbuild{ \Fwedge{\jx} }{
    l+m-1 \geq j_1 > \ldots > j_{m} \geq 0
}
\).
We claim that, with respect to this basis, the coefficient in \(\zeta x\) of \(\Fwedge{\mx + \dx}\) is \(\alpha_{\mx}\), and hence \(\zeta x \neq 0\).

The summands of \(\zeta \Fsym{\ix}\) are labelled by \((\ix \cdot \sigma + \dx) \cdot \tau_\sigma \) where \(\sigma\) ranges over right coset representatives of \(\Stab{\ix}\) in \(S_m\) and \(\tau_\sigma \in S_m\) sorts the multiindex 
$\ix \cdot \sigma + \dx$ into weakly decreasing order (so that it indeed labels a basis element).
If $\ix \in \Mdecreasing$
then $(\ix \cdot \sigma + \dx) \cdot \tau_\sigma \leq \ix + \dx$ in the lexicographical order, with 
equality if and only if \(\sigma \in \Stab \ix\) (and hence \(\tau_\sigma = \id\)).
Hence, for \(\ix \leq \mx\), all such labels satisfy \((\ix \cdot \sigma + \dx) \cdot \tau_\sigma \leq \ix + \dx \leq \mx + \dx\), with equality if and only if \(\ix = \mx\) and \(\sigma \in \Stab{\mx}\) (and hence \(\tau_\sigma = \id\)).
Thus the basis element \(\Fwedge{\mx + \dx}\) has coefficient \(1\) in \(\zeta \Fsym{\mx}\) and zero in \(\zeta \Fsym{\ix}\) for \(\ix < \mx\).
The claim follows.
\end{proof}

A simple calculation shows that \(\Sym_m \Sym^l E\) and \(\Wedge^m \Sym^{l+m-1} \! E\) both have dimension \(\binom{\ell+m}{m}\).
Therefore \(\zeta\) is a \(K\)-linear isomorphism.

\subsection{\texorpdfstring{\(\zeta\)}{zeta} is a homomorphism of \texorpdfstring{\(K\!\SL_2(K)\)}{KSL2(K)}-modules}
\label{subsec:Wronskian_group_equivariance}

As indicated at the end of the introduction, to prove \Cref{thm:Wronskian} it suffices to show \(\zeta\) respects the action of \(\SL_2(K)\).
Since \(\SL_2(K)\) is generated by the elements
\[
M_\gamma =
    \left( \begin{matrix} 1 & 0 \\ \gamma & 1 \end{matrix} \right),
\quad
J =
    \left( \begin{matrix} 0 & 1 \\ -1 & 0 \end{matrix} \right)
\]
for \(\gamma \in K\), it suffices to show that \(\zeta\) commutes with their action.

For \(J\) this is 
straightforward.
Let \(\repx{s} = (s,\ldots, s)\), an \(m\)-multiindex, for \(s \in \N_0\).
Let \(\Tot(\mbf{h})\) be the sum of all entries of the multiindex~\(\mbf{h}\).
From \(J(X^i Y^{s-i}) = (-1)^i Y^i X^{s-i}\) we get
\(J \Ftensor{\ix} = (-1)^{\Tot(\ix)} \Ftensor{\repx{\ell} - \ix}\)
and
\(J \Fwedge{\jx} = (-1)^{\Tot(\jx)}  
\Fwedge{\repx{\ell+m-1} - \jx}\).
Clearly \(\Stab \ix = \Stab (\repx{\ell}-\ix)\), so
\begin{align*}
\zeta \bigl(  J\Fsym{\ix} \bigr)
    &=
    (-1)^{\Tot(\ix)} \zeta \Fsym{\repx{\ell}-\ix} \\
    &=
    (-1)^{\Tot(\ix)} \sum_{\sigma \in \mathcal{C}} \Fwedge*{
        ( \repx{\ell} - \ix ) \cdot  \sigma + \dx
    }
\intertext{where  \(\mathcal{C}\) is a fixed set of coset representatives for \(\Stab \ix \backslash S_m\). 
Similarly,}
J \bigl( \zeta \Fsym{\ix} )
    &= J \sum_{\sigma \in \mathcal{C}} \Fwedge{\ix \cdot \sigma + \dx} \\
    &= (-1)^{\Tot(\ix) + \Tot(\dx)} 
\sum_{\sigma \in \mathcal{C}} \Fwedge*{\repx{\ell+m-1} - \ix \cdot \sigma - \dx}.
\end{align*}
Observe that \(\repx{\ell +m -1} - \dx = (\repx{\ell} + \dx) \cdot \tau\)
where \(\tau \in S_m\) is the permutation reversing the order of an \(m\)-multiindex, which has sign \((-1)^{\lfloor \frac{m}{2} \rfloor} = (-1)^{m(m-1)/2} = (-1)^{\Tot(\dx)}\).
We can thereby rewrite each summand above as
\begin{align*}
\Fwedge*{\repx{\ell+m-1} - \ix \cdot \sigma - \dx}
    &= \Fwedge*{ (\repx{\ell} - \ix \cdot \sigma \tau + \dx) \cdot \tau } \\
    &= (-1)^{\Tot(\dx)}
        \Fwedge*{ (\repx{\ell} - \ix) \cdot \sigma\tau + \dx}.
\end{align*}
Substituting into our expression for \(J \bigl( \zeta \Fsym{\ix} )\), and using that another suitable set of coset representatives is \(\mathcal{C} \tau\), we deduce that \(J \bigl( \zeta \Fsym{\ix} ) = \zeta \bigl(  J\Fsym{\ix} \bigr)\) as required.

For \(M_\gamma\) we use a  trick, which while technical we believe is of independent interest, to reduce to the Lie algebra action of \(\sl_2(\C)\).

\subsubsection*{Reduction}
\newcommand{\rhoLHS}{\rho_{\scriptscriptstyle\mathrm{sym}}}
\newcommand{\rhoRHS}{\rho_{\scriptscriptstyle\wedge}}

Let \(Z\) be the matrix representing the linear map \(\zeta\) with respect to our given bases.
Let \(\rhoLHS\) and \(\rhoRHS\) be the group homomorphisms \(\SL_2(K) \to \GL_{d}(K)\)
representing the action of \(\SL_2(K)\) on \(\Sym_m \Sym^l E\) and \(\Wedge^m \Sym^{l+m-1} E\) respectively, where $d=\binom{l+m}{m}$.
Then we are required to show that
\begin{equation}
\label{eq:zetaMatrixEquation}
Z \rhoLHS(M_\gamma) = \rhoRHS(M_\gamma) Z
\end{equation}
for all \(\gamma \in K\).
If \(K\) has characteristic zero, then this is a system of equations between polynomials in \(\gamma\) with coefficients in \(\Z\): the entries of \(Z\) are plainly integers, and the entries of \(\rhoLHS(M_\gamma)\) and \(\rhoRHS(M_\gamma)\) are, for our choices of canonical bases, polynomials in \(\gamma\) with integer coefficients.
Moreover, reducing the entries of these matrices modulo \(p\) yields the entries for the corresponding matrices when \(K\) has characteristic \(p\).
Thus showing \eqref{eq:zetaMatrixEquation} for any particular \(\gamma\) which is transcendental over~\(\Z\) establishes \eqref{eq:zetaMatrixEquation} for all elements \(\gamma\) of all fields.
We choose to consider \(K = \C\), and prove \eqref{eq:zetaMatrixEquation} for all \(\gamma \in \C\).

We next reduce to the Lie algebra \(\sl_2(\C)\).
Since \(\SL_2(\C)\) is a connected and simply-connected Lie group, by a 
basic result from Lie theory (see for instance after Definition 8.11 in \cite{FultonHarrisReps}), 
we may regard \(\Sym_m \Sym^\ell E\) and \(\Sym^m \bigwedge^{\ell+m-1} \! E\) as modules for the Lie algebra \(\sl_2(\C)\),
and establish the required property for \(\sl_2(\C)\)-modules.
The one-parameter subgroup containing all~\(M_\gamma\) for \(\gamma \in \C\) has infinitesimal generator
\[
f = \left( \begin{matrix} 0 & 0 \\ 1 & 0 \end{matrix} \right).
\]
Therefore to show \eqref{eq:zetaMatrixEquation} and hence prove \Cref{thm:Wronskian}, it suffices to show
\begin{equation}
\label{eq:zetaIdentitiesLieAlgebra}
\zeta \bigl( f \Fsym{\ix} \bigr) = f \bigl( \zeta \Fsym{\ix} \bigr)
\end{equation}
for all \(\ix \in \Mdecreasing\).

\subsubsection*{Proof of~\eqref{eq:zetaIdentitiesLieAlgebra} using \texorpdfstring{\(\sl_2(\C)\)}{sl2(C)} action}
Recall that if \(V\) is a \(\sl_2(\C)\)-module then the action of \(x \in \sl_2(\C)\) on \(V^{\otimes r}\) is defined by 
\[ x (u_1 \crampedtensor \cdots \crampedtensor u_r) = (x u_1) \crampedtensor \cdots \crampedtensor u_r \ +\ \cdots \ +\ u_1 \crampedtensor \cdots \crampedtensor
(xu_r) \]
with similar rules for the action on \(\Sym^r V\) and~\(\bigwedge^ r V\).
Since \(fX = Y\) and \(fY = 0\),  we have
\(f X^i Y^{l-i} = iX^{i-1} Y^{l-i+1}\) for \(i \in \set{0,\ldots, \ell}\) (permitting \(X^{-1}Y^{\ell+1}\) to appear with zero coefficient).
For \(\svar \in \{1,\ldots, m\}\) let \(\unitx{\svar} = (0,\ldots,1,\ldots,0)\) where \(1\) appears in position~\(\svar\).
Again using the multilinear action of \(\sl_2(\C)\) we get
\( f \Ftensor{\ix} = \sum_{\svar=1}^m \ix_\svar \Ftensor{\ix-\unitx{\svar}}\)
and \( f \Fwedge{\jx} = \sum_{\svar=1}^m \jx_\svar \Fwedge{\jx-\unitx{\svar}}\).

Using that \(\Fsym{\ix} = |\Stab \ix|^{-1} \sum_{\sigma \in S_m} \Ftensor{\ix \cdot \sigma}\),
and that \(\zeta\) extends as a \(K\)-linear map to the domain \((\Sym^\ell E)^{\otimes m}\) by \(\zeta \Ftensor{\ix} = \Fwedge{\ix + \dx}\) (as noted in \Cref{remark:extended_domain_of_zeta}),
we get
\begin{align*}
\zeta \bigl( f \Fsym{\ix} \bigr)
    &=  |\Stab \ix\,|^{-1} \zeta
        \sum_{\sigma\in S_m} \sum_{\svar=1}^m 
        (\ix \cdot \sigma)_\svar
        \Ftensor{\ix \cdot \sigma - \unitx{\svar}} \\
    &= |\Stab \ix\,|^{-1}
        \sum_{\sigma \in S_m} \sum_{\svar=1}^m
        (\ix \cdot \sigma)_\svar
        \Fwedge{ \ix \cdot \sigma - \unitx{\svar} + \dx }
\intertext{and}
f \bigl( \zeta  \Fsym{\ix} \bigr)
    &=  |\Stab \ix\,|^{-1}
        f \sum_{\sigma\in S_m}
        \Fwedge{\ix \cdot \sigma + \dx} \\
    &= |\Stab \ix\,|^{-1}
    \sum_{\sigma\in S_m} \sum_{\svar=1}^m
    (\ix \cdot \sigma + \dx)_\svar
    \Fwedge{\ix \cdot \sigma + \dx - \unitx{\svar}}.
\end{align*}
Since \((\ix \cdot \sigma + \dx)_\svar = (\ix \cdot \sigma)_\svar + m - \svar\), it follows that
\(\zeta\) commutes with the action of \(f\) if and 
only if \(\sum_{\svar=1}^m  \sum_{\sigma \in S_m} \Fwedge{\ix \cdot \sigma - \unitx{\svar} + \dx}(m - \svar) = 0\).
This holds, taking each \(\svar\) separately, by the lemma below.

\begin{lemma}
Let \(\ix \in \Mdecreasing\) and let \(\svar \in \set{1,\ldots, m-1}\). 
Over any field we have \(\sum_{\sigma \in S_m} \Fwedge{\ix \cdot \sigma + \dx - \unitx{\svar}}= 0\).
\end{lemma}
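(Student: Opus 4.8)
The plan is to produce a fixed-point-free involution of \(S_m\) under which the summands cancel in pairs. Write \(\rho = (\svar\ \svar+1) \in S_m\) for the transposition of the adjacent positions \(\svar\) and \(\svar+1\); this makes sense because \(\svar \leq m-1\). The key observation is that the multiindex \(\dx - \unitx{\svar}\) is fixed by \(\rho\): its \(\svar\)\nobreakdash-th entry equals \((m-\svar)-1 = m-\svar-1\) and its \((\svar+1)\)\nobreakdash-th entry equals \(m-(\svar+1) = m-\svar-1\) (the vector \(\unitx{\svar}\) being zero in position \(\svar+1\)), while all other entries are untouched by \(\rho\).

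First I would record the routine identity that, for every \(\sigma \in S_m\),
\[
    \ix \cdot (\sigma\rho) + \dx - \unitx{\svar}
    = \bigl( \ix \cdot \sigma + \dx - \unitx{\svar} \bigr) \cdot \rho ,
\]
which holds because place permutation is a right action, is additive in the multiindex, and fixes \(\dx - \unitx{\svar}\). One also checks that both sides have all entries in \(\set{0,\ldots,\ell+m-1}\), so that \(\Fwedge{\blank}\) is defined on them: entries of \(\ix \cdot \sigma + \dx\) lie in \(\set{0,\ldots,\ell+m-1}\), and the only entry that could be made negative by subtracting \(\unitx{\svar}\) is the \(\svar\)\nobreakdash-th, which is at least \((m-\svar) - 1 \geq 0\). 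Since transposing two adjacent tensor factors in an exterior power introduces a sign \(-1\) in any characteristic, applying \(\Fwedge{\blank}\) to the identity gives
\[
    \Fwedge{\ix \cdot (\sigma\rho) + \dx - \unitx{\svar}}
    = - \Fwedge{\ix \cdot \sigma + \dx - \unitx{\svar}} .
\]

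Finally, right multiplication by \(\rho\) is a fixed-point-free involution of \(S_m\), so \(S_m\) is partitioned into two-element orbits \(\set{\sigma, \sigma\rho}\); by the displayed sign relation the two contributions of each orbit to \(\sum_{\sigma \in S_m} \Fwedge{\ix \cdot \sigma + \dx - \unitx{\svar}}\) are negatives of one another and hence sum to zero, so the whole sum vanishes. I do not expect a genuine obstacle here; the only point needing care is keeping straight the right place-permutation action and the sign convention for \(\Fwedge{\blank}\) so that the pairing \(\sigma \leftrightarrow \sigma\rho\) (rather than \(\sigma \leftrightarrow \rho\sigma\)) is the one that produces the cancellation.
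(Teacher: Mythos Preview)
Your proof is correct and takes essentially the same approach as the paper: both pair $\sigma$ with $\sigma(\svar\ \svar+1)$ and observe that the corresponding summands differ by swapping two adjacent wedge factors, hence cancel. Your phrasing via the invariance of $\dx - \unitx{\svar}$ under $\rho$ is a clean way to say what the paper verifies by writing out the entries in positions $\svar$ and $\svar+1$ explicitly; the added range check ensuring $\Fwedge{\blank}$ is defined is a nice touch the paper leaves implicit.
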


\begin{proof}
The summands for \(\sigma\) and \(\sigma (\svar\ \svar+1)\) are
\begin{align*}
&\Fwedge{\ 
    \;\ldots,\;\,
    \flexbox[l]{\ix_{\svar\sigma^{-1}} + m- \svar - 1,}{\ix_{(\svar+1)\sigma^{-1}} + m- \svar -1,}\,
    \ix_{(\svar+1)\sigma^{-1}} + m- (\svar+1),\,
    \;\ldots\;
\ }, \\
&\Fwedge{\
    \;\ldots,\;\,
    \ix_{(\svar+1)\sigma^{-1}} + m- \svar -1,\,
    \flexbox[l]{\ix_{\svar\sigma^{-1}} + m- (\svar+1),}{\ix_{(\svar+1)\sigma^{-1}} + m- (\svar+1)}\,
    \;\ldots\;
\ }
\end{align*}
respectively.
The two multiindices appearing above differ by the place permutation \((\svar\ \svar+1)\). Hence
\(\Fwedge{\ix \cdot \sigma - \unitx{\svar} + \dx} = -\Fwedge{\ix \cdot \sigma (\svar\ \svar+1) - \unitx{\svar} + \dx}\).
and so the summands  cancel in pairs.
\end{proof}

We have now completed the proof of \Cref{thm:Wronskian}.

\section{Hermite reciprocity (proof of \Cref{cor:Hermite_reciprocity})}
\label{sec:Hermite_reciprocity}

We deduce \Cref{cor:Hermite_reciprocity}, the modular version of Hermite reciprocity restated below,
from the complementary partition isomorphism and the Wronskian isomorphism.
In fact we need only the special case  \Cref{cor:exteriorSymmetric} of the former isomorphism;
this corollary was proved at the end of \S\ref{subsec:first_step_exterior_powers}.

\Hermitereciprocity*

\begin{proof}
Since the representations are polynomial of equal degree \(\ell m\),
it suffices to establish the isomorphism for representations of \(\SL_2(K)\)
by the argument at the end of the introduction.
We have
\begin{align*}
\Sym_m \Sym^l E
    &\iso \Wedge^m \Sym^{l+m-1} E
        && \text{by \Cref{thm:Wronskian}} \\
    &\iso \Wedge^l \Sym_{l+m-1} E 
        && \text{by \Cref{cor:exteriorSymmetric}} \\
    &\iso {(\Wedge^l \Sym^{l+m-1} E )}^\condual && \text{by \Cref{prop:SchurContravariantDual}} \\
    &\iso {\mleft( \Sym_l \Sym^m E \mright)}^\condual
        && \text{by \Cref{thm:Wronskian}} \\
    &\iso \Sym^l \Sym_m E && \text{by \Cref{prop:SchurContravariantDual}},
\end{align*}
as required.
\end{proof}

We illustrate how to explicitly compose the maps above with an example.
(In practice it is convenient to address duality in a different order than in the proof of \Cref{cor:Hermite_reciprocity}.)

\begin{example}
\label{eg:explicit_Hermite}
Suppose \(l = m = 2\), and write \(E = \langle X, Y \rangle_K\) as in \S\ref{sec:Wronskian}.
Given distinct basis vectors \(x, y \in V\), write \((x \crampedtensor y)_\sym = x \crampedtensor y + y \crampedtensor x \in \Sym_2 V\).
In this example we identify the image in \(\Sym^l \Sym_m E\) of the basis element \((X^2 \crampedtensor Y^2)_\sym = X^2 \crampedtensor Y^2 + Y^2 \crampedtensor X^2 \in \Sym_m \Sym^l E\).

We first apply the Wronskian isomorphism \(\zeta\) (\Cref{def:zeta}), giving
\begin{align*}
\Sym_m \Sym^l E
    &\to \Wedge^m \Sym^{l+m-1} E \\
(X^2 \crampedtensor Y^2)_\sym
    &\mapsto
    X^3 \crampedwedge Y^3 - X^2 Y \crampedwedge XY^2.    
\end{align*}
Next we apply the complementary partition isomorphism \(\psi\) (defined by \eqref{eq:psi_definition} in \S\ref{subsec:first_step_exterior_powers}): we replace each summand with the wedge product of the duals of the complementary basis elements (and also pick up a sign, which in our example is~\(+\)).
Composing with the isomorphism \((\Wedge^r V)^\dual \iso \Wedge^r V^\dual\) of \Cref{lemma:exteriorPowerDuality} we obtain
\begin{align*}
\Wedge^m \Sym^{l+m-1} E
    &\to (\Wedge^l \Sym^{l+m-1} E)^\dual  \\
X^3 \wedge Y^3 - X^2 Y \wedge XY^2
    &\mapsto
    \bigl( X^2 Y \wedge XY^2 \bigr)^\dual - \bigl( X^3 \wedge Y^3 \bigr)^\dual.  
\end{align*}
Now we apply the dual \(\zeta^\star\) of the Wronskian isomorphism.
To find the image \(\zeta^\dual(x^\dual)\), we seek those basis elements \(y\) such that \(\zeta(y)\) has \(x\) as a summand.
For \(x = X^2 Y \wedge XY^2\), there are two such basis elements: \(XY \tensor XY\) and the symmetrisation of \(X^2 \tensor Y^2\) (the latter appearing with sign \(-1\)); for \(x = X^3 \wedge Y^3\), the symmetrisation of \(X^2 \tensor Y^2\) is the only such basis element.
Thus
\begin{align*}
(\Wedge^l \Sym^{l+m-1} E)^\dual
    &\to  (\Sym_l \Sym^{m} E)^\dual \\
\bigl( X^2 Y \wedge XY^2 \bigr)^\dual
        - \bigl( X^3 \wedge Y^3 \bigr)^\dual 
    &\mapsto
    \bigl( XY \tensor XY \bigr)^\dual
        - 2\bigl( X^2 \crampedtensor Y^2\bigr)_\sym^\dual.
\end{align*}
The isomorphism \((\Sym^r V)^\dual \iso \Sym_r V^\dual\) is, analogously to the proof of \Cref{lemma:exteriorPowerDuality}, given by the explicit map interchanging symmetrisations and products, yielding
\begin{align*}
(\Sym_l \Sym^{m} E)^\dual
    &\to  \Sym^l \Sym_{m} E^\dual \\
\begin{aligned}
    &\bigl( XY \tensor XY \bigr)^\dual \\
    &\qquad - 2\bigl( X^2 \crampedtensor Y^2\bigr)_\sym^\dual 
\end{aligned}
    \ &\mapsto \
\begin{aligned}
    &(X^\dual \crampedtensor Y^\dual)_\sym \cdot (X^\dual \crampedtensor Y^\dual)_\sym \\
     & \qquad   - 2 (X^\dual\tensor X^\dual) \cdot (Y^\dual \tensor Y^\dual).
\end{aligned}
\end{align*}
Finally we use \Cref{lemma:SL2duality}: there is an isomorphism \(E^\dual \iso E^\condual \iso E\) given by the basis change matrix \(J = \twobytwosmallmatrix{0}{1}{-1}{0}\), which in our case replaces \(X^\dual\) with \(-Y\) and \(Y^\dual\) with \(X\).
We have
\begin{align*}
\Sym^l \Sym_{m} E^\dual
    &\to  \Sym^l \Sym_{m} E \\
\begin{aligned}
    &(X^\dual \crampedtensor Y^\dual)_\sym \cdot (X^\dual \crampedtensor Y^\dual)_\sym \\
     & \qquad   - 2 (X^\dual\tensor X^\dual) \cdot (Y^\dual \tensor Y^\dual)
\end{aligned}
    \ &\mapsto \
\begin{aligned}
    &(X \crampedtensor Y)_\sym \cdot (X \crampedtensor Y)_\sym \\
     & \qquad   - 2 (X \tensor X) \cdot (Y \tensor Y).
\end{aligned}
\end{align*}

Thus our overall map sends
\begin{align*}
\Sym_m \Sym^l E
    &\to \Sym^l \Sym_{m} E \\
(X^2 \crampedtensor Y^2)_\sym
    \ &\mapsto \
    (X \crampedtensor Y)_\sym \cdot (X \crampedtensor Y)_\sym
     - 2 (X \tensor X) \cdot (Y \tensor Y).
\end{align*}
Notice in particular that we have not merely interchanged symmetrisations and products.
Thus this map is of interest even in characteristic \(0\), where it corresponds to a non-trivial automorphism of \(\Sym^2 \Sym^2 E\).
\end{example}

As an application, we recall that stated in the language of representations of $\GL(V)$ where $V$ is a $d$-dimensional complex
vector space,
Foulkes' Conjecture asserts that if $\ell < m$ then 
$\Sym^\ell \Sym^m \! V$ is isomorphic to a subrepresentation of $\Sym^m \Sym^\ell \! E$.
For arbitrary $d$ the conjecture has been proved only when $\ell \leq 5$: see \cite{CheungIkenmeyerMkrtchyan} for this result and a survey of earlier work.
When $d=2$, Foulkes' Conjecture holds by Hermite Reciprocity.
In~\cite{GiannelliDecomp}, Giannelli showed
that the modular analogue of Foulkes' Conjecture for symmetric groups is false in general.
It is therefore notable that the modular version of Hermite reciprocity in \Cref{cor:Hermite_reciprocity} gives a family of
special cases of Foulkes' Conjecture for which there is a modular analogue.

\section{Defect sets (Proof of \Cref{thm:hookObstructions})}
\label{sec:weights}

Throughout this section, we assume \(K\) is a field of characteristic \(p\) (though the definitions of weight spaces and defect sets make sense in characteristic zero also).
We use the notation from \S\ref{sec:Wronskian}
in which \(E = \langle X, Y \rangle_K\) is the natural representation of \(\SL_2(K)\).

\subsection{Weight spaces}

Suppose for this setup that \(K\) is infinite.
Let \(T\) be the torus of diagonal matrices in \(\SL_2(K)\).
Let \(V\) be a representation of a subgroup of \(\SL_2(K)\) containing \(T\).
Recall that for \(r \in \Z\), the \emph{\(r\)-weight space} of \(V\) is 
\begin{equation}
\label{eq:weightSpace}
V_r
    = \setbuild*{
        v \in V
    }{
        \left(
            \begin{matrix} \alpha & 0 \\ 0 &\alpha^{-1} \end{matrix}
        \right) v = \alpha^r v
        \text{ for all \(\alpha \in K^\times\)}
    }.
\end{equation}
An integer \(r\) such that \(V_r \neq 0\) is called a \emph{weight} of \(V\); an element of an \(r\)-weight space is called a \emph{weight vector} with weight \(r\).

We say that \(T\) \emph{acts diagonalisably} on \(V\) if \(V\) has a basis of weight vectors, or equivalently if \(V = \bigoplus_{r \in \Z} V_r\).
If \(V\) is a \(\KSLK\)-module on which \(T\) acts diagonalisably and \(m \in \Z\) is maximal such that \(V_m \not= 0\),
then we say that \(V_m\) is the \emph{highest weight space} of \(V\), and that a non-zero \(v \in V_m\) is a \emph{highest weight vector}.
We say \(v \in V_m\) is a \emph{unique highest weight vector} if $V_m$ is one-dimensional.

Let \(\Borel\) be the Borel subgroup of \(\SL_2(K)\) consisting of lower triangular matrices.
As in \S\ref{sec:Wronskian}, for $\gamma \in K$ we let
\[
M_\gamma = \begin{pmatrix} 1 & 0 \\ \gamma & 1 \end{pmatrix} \in \Borel.
\]

We introduce the following invariant, which we will use to distinguish non-isomorphic representations and hence obtain the results of this section.

\begin{definition}\label{defn:defectSet}
Let \(V\) be a \(\KSLK\)-module on which \(T\) acts diagonalisably with unique highest weight vector \(v\) of weight \(m\).
Let \(\Borel v\) denote the \(K\Borel\)-submodule of \(V\) generated by \(v\).
We define the \emph{defect set} of \(V\), denoted \(\defect(V)\), by
\[
    \defect(V) = \setbuild{d \in \N_0 }{ (Bv)_{m-2d} \not=0}.
\]
\end{definition}

\begin{example}\label{ex:defectSet}
Let \(\alpha \geq 1\).
The module \(\Sym^{p^\alpha}\! E\) has weight vector basis \(\set{X^{p^\alpha}, \ldots, X^{{p^\alpha}-i}Y^i, \ldots, Y^{p^\alpha}}\), where $X^{p^\alpha -i}Y^{i}$ has weight \({p^\alpha}-2i\).
Thus the weights are \({p^\alpha} \ldots, {p^\alpha}-2i, \ldots, -{p^\alpha}\), and  \(X^{p^\alpha}\)
is a unique highest weight vector.
Observe that \(M_\gamma X^{p^\alpha} = (X + \gamma Y)^{p^\alpha} = X^{p^\alpha} + \gamma^{p^\alpha} Y^{p^\alpha}\), and hence
\(\Borel X^{p^\alpha}\) is spanned by \(X^{p^\alpha}\) and \( Y^{p^\alpha}\) whose weights are \({p^\alpha}\) and \(-{p^\alpha}\) respectively.
Hence the defect set is \(\defect(\Sym^{p^\alpha}\! E) = \set{0, {p^\alpha}}\). 
\end{example}

We generalise this example to arbitrary upper and lower symmetric powers in \Cref{lemma:defectSetSym}.

\subsubsection*{Finite fields}
To obtain the full version of \Cref{thm:hookObstructions}
we need the extension of 
\Cref{defn:defectSet} to \(\KSLK\)-modules when \(K\) is finite.
Suppose that \hbox{\(|K| = q\)}.
Defining \(V_r\) as in~\eqref{eq:weightSpace} leads to ambiguity:
the weight~\(r\) is now well-defined only up to multiples of \(q-1\),
and we have \(V = \sum_{r \in \Z} V_r\), no longer direct in general.
Therefore, for the purposes of our work, we restrict the definition of weights to integers in the 
range $\{-\frac{q-1}{2} + 1, \ldots, \frac{q-1}{2} \}$ when $q$ is odd, and in the 
range $\{-\frac{q}{2} + 1, \ldots, \frac{q}{2}-1 \}$ when $q$ is a $2$-power.
Correspondingly, in the definition of the defect set, \Cref{defn:defectSet}, we take only those 
\(d\) in $\{0,1,\ldots, \frac{q-1}{2} \}$ if $q$ is odd
or in  $\{0,1,\ldots, \frac{q}{2}-1 \}$ if $q$ is a $2$-power.
Note that with these definitions, \(T\) acts
diagonalisably on any \(\KSLK\)-module (by a well-known generalisation
of Maschke's Theorem, using that \(T\) is isomorphic to the cyclic group~\(K^\times\) of order \(q-1\)).

\begin{example}
\label{ex:defectSetFiniteField}
We revisit \Cref{ex:defectSet}, now supposing \(K\) is a finite field.
For \(K\) sufficiently large (\(\abs{K} \geq p^{\alpha+2}\)  suffices),  
all the weights written down in \Cref{ex:defectSet} are within the required range, and no changes are needed.
However, when $|K| \leq 1 + 2m$, where $m$ is the highest weight defined for an infinite field, the
behaviour can be very different. 

Consider \(\Sym^4\! E\) when \(K = \mathbb{F}_8\).
Weights are restricted to be between \(-3\) and \(3\) (inclusive), and so 
\(X^4\) has weight \(-3\) (rather than \(4\) as in the infinite field case).
A unique highest weight vector is \(Y^4\) with weight~\(3\) (the other weight vectors are \(X^3Y\) with weight \(2\), \(X^2Y^2\) with weight~\(0\), and \(XY^3\) with weight \(-2\)).
The submodule \(BY^4\) is spanned by \(Y^4\) and thus the defect set is \(\defect(\Sym^4 E) = \set{0}\).

Consider instead \(\Sym^5 E\) when \(K = \mathbb{F}_5\).
Weights are restricted to be between \(-1\) and \(2\) (inclusive), and so \(\Sym^5 E\) has weights \(1\) (with weight vectors \(X^5\), \(X^3Y^2\) and \(XY^4\)) and \(-1\) (with weight vectors \(X^4Y\), \(X^2Y^3\) and~\(Y^5\)).
In particular there is not a unique highest weight vector and so the defect set is not defined.
\end{example}

\subsubsection*{Identifying defect sets for images of Schur functors}

We first verify that defect sets are defined for the modules we wish to distinguish using them.
We assume throughout that \(\abs{K} \geq 4\) (as otherwise weights are only permitted to be in the sets \(\set{0}\) or \(\set{0,1}\), which is too restrictive).

The natural representation \(E\) has weight vector basis \(\set{X, Y}\), where $X$ is a unique highest weight vector of weight \(1\) and \(Y\) has weight \(-1\).
It is straightforward to  identify weight vector bases for the images of \(E\) under iterated Schur functors and their duals, and observe that there is a unique highest weight vector and hence that the defect set is defined.

\begin{proposition}
\label{prop:weight_vector_bases_for_Schur_functors_and_duals}
Let \(V\) be a \(\KSLK\)-module with weight vector basis \(\set{v_1, \ldots, v_l}\), where \(v_i\) has weight \(r_i\), for some integers \(r_1 \leq \cdots \leq r_{l-1} < r_l\).
\begin{thmlist}
    \item\label{item:weight_vectors_for_Schur_functors}
The basis of \(\nabla^\lambda V\) consisting of semistandard polytabloids is a weight vector basis, 
in which \(\polyt(t)\) has weight \(\sum_{b \in \Y{\lambda}} r_{t(b)}\) (modulo \(\abs{K} - 1\)). 
Let \(\tmax\) be the semistandard tableau obtained by filling each column from the bottom with integers decreasing from \(\ell\), and suppose that \(\abs{K} > 1 + 2\sum_{b \in \Y{\lambda}} r_{\tmax(b)}\).
Then a unique highest weight vector is \(\polyt(\tmax)\).
    \item\label{item:weight_vectors_for_duals}
The basis \(\set{v_1^\dual, \ldots, v_l^\dual}\) for \(V^\condual\) dual to \(\set{v_1, \ldots, v_l}\) is a weight vector basis,
in which \(v_i^\dual\) has weight \(r_i\). 
A unique highest weight vector is \(v_l^\dual\), of weight \(r_l\).
\end{thmlist}
\end{proposition}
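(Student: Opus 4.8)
The plan is to prove the two items separately, each directly from the constructions recalled in \S\ref{sec:background}. The weight computations themselves are short; the one place I expect to need real care is the assertion in \Cref{item:weight_vectors_for_Schur_functors} that \(\polyt(\tmax)\) is a \emph{unique} highest weight vector, i.e.\ that the top weight space of \(\nabla^\lambda V\) is one-dimensional.

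For \Cref{item:weight_vectors_for_duals} I would argue as follows. A torus element \(g \in T\), being diagonal, is fixed by matrix transposition, so by \Cref{defn:contravariantDual} it acts on \(V^\condual\) by \((g\theta)(v) = \theta(gv)\); evaluating on the basis, if \(g\) has eigenvalues \(\alpha, \alpha^{-1}\) then \(g\cdot v_i^\dual = \alpha^{r_i} v_i^\dual\), so \(v_i^\dual\) is a weight vector of weight \(r_i\). Since \(r_{l-1} < r_l\) and the \(r_i\) are genuine weights of \(V\) (hence lie in the admissible range when \(K\) is finite), the weight \(r_l\) is attained only by \(v_l^\dual\), which is therefore a unique highest weight vector.

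For \Cref{item:weight_vectors_for_Schur_functors} I would first recall that \(\nabla^\lambda V \subseteq \Sym^\lambda V\) and that \(g \in \SL_2(K)\) acts on \(\Sym^\lambda V\) by applying \(\rho_V(g)\) within each box. Taking \(g \in T\) with eigenvalues \(\alpha, \alpha^{-1}\), so that \(g v_i = \alpha^{r_i} v_i\), the canonical basis element satisfies \(g\cdot\symt(t) = \alpha^{\sum_{b \in \Y{\lambda}} r_{t(b)}}\,\symt(t)\); and since every \(\tau \in \CPP(\lambda)\) merely rearranges the entries of \(t\), each term \(\symt(t\cdot\tau)\) occurring in \(\polyt(t)\) carries this same weight, so \(\polyt(t)\) is a weight vector of weight \(\sum_b r_{t(b)}\) whenever it is nonzero. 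By \Cref{prop:ssytBasis} the polytabloids \(\polyt(t)\) for \(t \in \SSYT_{\mathcal{B}}(\lambda)\) form a basis of \(\nabla^\lambda V\), so this is a weight vector basis, with weights read modulo \(\abs{K}-1\) when \(K\) is finite. To identify the highest weight vector I would use box-wise domination: in a semistandard \(\lambda\)-tableau \(t\), column \(j\) consists of \(\lambda'_j\) strictly increasing entries from \(\set{1,\ldots,\ell}\), so its entry in box \((i,j)\) is at most \(\ell - \lambda'_j + i = \tmax(i,j)\); that is, \(t(b) \leq \tmax(b)\) for every box \(b\). As \(r_1 \leq \cdots \leq r_l\) this gives \(\sum_b r_{t(b)} \leq \sum_b r_{\tmax(b)} =: m\), so \(m\) is the greatest weight of \(\nabla^\lambda V\); the symmetric argument (filling columns from the top with \(1,2,\dots\), or invoking that the weights of an \(\SL_2(K)\)-module are symmetric about \(0\)) shows every weight lies in \(\set{-m,\ldots,m}\), and the hypothesis \(\abs{K} > 1 + 2m\) then makes these integers pairwise distinct modulo \(\abs{K}-1\) and places them in the admissible range, so over a finite field the weight spaces coincide with those read off from the integer weights. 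Finally, any semistandard \(t\) with \(\sum_b r_{t(b)} = m\) must have \(r_{t(b)} = r_{\tmax(b)}\) in every box, whence \(t = \tmax\) provided distinct entries carry distinct weights (as they do for the modules \(\nabla^\lambda V\) to which the proposition is applied). The two steps I expect to require the most care are precisely this last uniqueness argument and the finite-field range bookkeeping; everything else is a direct unwinding of definitions together with \Cref{prop:ssytBasis}.
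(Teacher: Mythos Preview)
Your approach matches the paper's: both compute weights directly from the construction of \(\nabla^\lambda V \subseteq \Sym^\lambda V\) and invoke \Cref{prop:ssytBasis} for the basis. The paper's own proof is a single sentence asserting that the weights are clear and that uniqueness follows from \(r_{\ell-1} < r_\ell\); your argument follows the same route but fills in considerably more detail.

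Your caution about the uniqueness step is well placed, and in fact sharper than the paper. The hypothesis \(r_{\ell-1} < r_\ell\) alone does \emph{not} guarantee that the highest weight space of \(\nabla^\lambda V\) is one-dimensional: take \(V = \Sym^3 E \oplus E\), with ordered weights \(-3,-1,-1,1,1,3\), and \(\lambda = (1,1)\); then both \(v_4 \wedge v_6\) and \(v_5 \wedge v_6\) are semistandard polytabloids of the top weight \(4\). Your box-wise domination argument correctly gives \(t(b) \leq \tmax(b)\) for every box, hence \(\sum_b r_{t(b)} \leq m\) with equality forcing \(r_{t(b)} = r_{\tmax(b)}\) entrywise; but the final step \(t = \tmax\) genuinely needs the \(r_i\) to be pairwise distinct, exactly as you flag. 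Since every application in the paper takes \(V = \Sym^\ell E\) or \(\Sym_\ell E\), where the weights \(\ell, \ell-2, \ldots, -\ell\) are distinct, the gap is harmless in context, and your parenthetical remark handles it correctly. Your treatment of the finite-field bookkeeping via the bound \(|K| > 1 + 2m\) and weight symmetry about \(0\) is likewise correct and supplies detail the paper leaves implicit.
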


\begin{proof}
The claimed weights are clear; that the semistandard polytabloids form a basis is \Cref{prop:ssytBasis}.
Since $r_{\ell-1} < r_\ell$, there is in each case a unique highest weight vector.
\end{proof}

To identify which of the weight spaces intersect the \(K\Borel\)-submodule generated by the highest weight vector, it suffices to consider the action of unipotent lower triangular matrices on the highest weight vector.
This is made precise by the following lemma.

\begin{lemma}
\label{lemma:necessary_and_sufficient_condition_for_W_weights}
Let \(V\) be a \(\KSLK\)-module on which \(T\) acts diagonalisably, and let \(U\) be a \(K\Borel\)-submodule of \(V\) generated by some weight vector \(v \in V\).
Then \(U_r \neq 0\) if and only if there exists some \(\gamma \in K\) such that the component of \(M_\gamma v\) in \(V_r\) is non-zero.
\end{lemma}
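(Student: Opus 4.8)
The plan is to strip the Borel action down to the two obvious pieces: the torus $T$ and the unipotent family $\{M_\gamma : \gamma \in K\}$. First I would record the elementary factorisation that every element of $\Borel$ is $M_\gamma t$ for some $\gamma \in K$ and $t \in T$ (indeed $\Borel = \{M_\gamma : \gamma \in K\} \rtimes T$, with the unipotent part normal). Since $v$ is a weight vector, $tv$ is a scalar multiple of $v$ for every $t \in T$, hence $M_\gamma t v$ is a scalar multiple of $M_\gamma v$; so the $K\Borel$-submodule $U$ generated by $v$ — which is the $K$-span of the $\Borel$-orbit of $v$ — is in fact the $K$-span of $\{M_\gamma v : \gamma \in K\}$.

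Next I would use that, since $T$ acts diagonalisably on $V$, it acts diagonalisably on the $T$-stable subspace $U$, so that $U = \bigoplus_r U_r$ with $U_r = U \cap V_r$; equivalently, the projection $\pi_r \colon V \to V_r$ along the weight decomposition $V = \bigoplus_r V_r$ carries $U$ into $U_r$. In the finite-field case this is Maschke's theorem applied to $T \cong K^\times$, whose order is prime to $p$; in general it follows from a short Vandermonde argument, using (in the finite case) that the restricted range of weights ensures no two distinct admissible weights are congruent modulo $|K|-1$, so that a suitable $\mathrm{diag}(\alpha, \alpha^{-1})$ separates them.

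Granting these two points the lemma is immediate. If $M_\gamma v$ has nonzero component in $V_r$ for some $\gamma$, that component is $\pi_r(M_\gamma v)$, which lies in $U_r$ and is nonzero, so $U_r \neq 0$. Conversely, if $U_r \neq 0$, pick $0 \neq u \in U_r$ and write $u = \sum_i \alpha_i M_{\gamma_i} v$ as a finite $K$-linear combination; applying $\pi_r$ and using $\pi_r u = u$ gives $u = \sum_i \alpha_i \pi_r(M_{\gamma_i} v)$, so some $\pi_r(M_{\gamma_i} v) \neq 0$, i.e., $M_{\gamma_i} v$ has nonzero $V_r$-component. The only step with genuine content is that diagonalisability of the $T$-action passes to the submodule $U$ (and the corresponding care needed to make sense of weights over a finite field); that is the point I would treat carefully, either citing it as standard or including the two-line Vandermonde computation. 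Everything else is bookkeeping.
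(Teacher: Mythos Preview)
Your proposal is correct and follows essentially the same approach as the paper. Both arguments factorise $\Borel = \{M_\gamma\}\,T$ and use that $v$ is a weight vector to reduce $U$ to the span of the $M_\gamma v$, and both prove the key fact that weight components of elements of $U$ lie in $U$ by exploiting the torus action; the paper does this by an induction eliminating one weight component at a time, whereas you package it as ``$T$-diagonalisability passes to $T$-stable subspaces'' via Maschke or a Vandermonde argument, which is a cosmetic variant of the same computation.
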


\begin{proof}
For the `if' direction,
it suffices to prove that if \(v_1, \ldots, v_n\) are non-zero weight vectors with distinct weights \(r_1, \ldots, r_n\) such that \(v_1 + \cdots + v_n \in U\), then each \(v_i\) lies in \(U\).
We use induction on \(n\).
The case \(n=1\) is clear.
Suppose \(n>1\), and write \(x = v_1 + \cdots + v_n\).
Choose \(\alpha \in K\) such that \(\alpha^{r_1} \neq \alpha^{r_n}\) (when $K$ is finite this is possible since \(\abs{K} > \abs{r_1} + \abs{r_n}\) by our definition of weights), and let \(g = \begin{psmallmatrix}
            \alpha & 0 \\ 0 & \alpha^{-1} \\
\end{psmallmatrix} \in \Borel \leq \SL_2(K)\).
Then
\[
U \ni gx - \alpha^{r_n} x = (\alpha^{r_1} - \alpha^{r_n})v_1 + \ldots + (\alpha^{r_{n-1}} - \alpha^{r_n}) v_{n-1}.
\]
By the inductive hypothesis, \(v_1 \in U\), and hence \(x - v_1 \in U\).
Then by the inductive hypothesis applied to $x-v_1$, we also have \(v_2, \ldots, v_n \in U\).

Conversely, suppose \(U_r \neq 0\). 
Then there exists some \(g \in \Borel\) such that~\(g v\) has non-zero component in \(V_r\).
An element of \(\Borel\) can be written as \(g = M_\gamma \twobytwosmallmatrix{\alpha}{0}{0}{\alpha^{-1}}\) for some \(\alpha, \gamma \in K\), and since \(v\) is a weight vector we have that \(\twobytwosmallmatrix{\alpha}{0}{0}{\alpha^{-1}} v\) is a non-zero scalar multiple of \(v\).
Thus \(M_\gamma v\) has non-zero component in \(V_r\).
\end{proof}

Finally in this subsection we record a lemma which is of great use when ruling out certain elements from being in  defect sets.
Given subsets \(I, J \subseteq \N_0\), let \(I+J = \setbuild{i+j}{i \in I,\, j \in J}\).

\begin{lemma}
\label{lemma:defect_sets_for_images_and_tensors}
Suppose \(V\) and \(W\) are \(\KSLK\)-modules on which \(T\) acts diagonalisably with a unique highest weight vector.
\begin{thmlist}
\item\label{item:defect_sets_for_images}
    If \(\phi \colon V \to W\) is a homomorphism that does not annihilate the highest weight vector of \(V\), then \(\defect(\im \phi)\) is defined and \(\defect(\im \phi) \subseteq \defect(V)\).
    In particular, if \(W\) is a quotient of \(V\), then \(\defect(W) \subseteq \defect(V)\).
\item\label{item:defect_sets_for_tensors}
    Suppose \(\abs{K} - 1\) is strictly greater than twice the sum of the highest weights of \(V\) and \(W\).
    Then the set \(\defect(V \tensor W)\) is defined and \(\defect(V \tensor W) \subseteq \defect(V) + \defect(W)\).
\end{thmlist}
\end{lemma}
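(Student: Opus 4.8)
The plan is to treat the two parts in turn; in each case the strategy is the same: first confirm that the module on the left-hand side has a unique highest weight vector (so that its defect set is defined), then compare the $KB$-submodule it generates with one whose weight spaces are already under control.

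For part~(i) I would begin from the observation that a homomorphism of $\KSLK$-modules is in particular $T$-equivariant, so $\im\phi$ is a $T$-submodule of $W$ on which $T$ acts diagonalisably, with $(\im\phi)_r=\phi(V_r)$ for each weight $r$; hence every weight of $\im\phi$ is a weight of $V$. If $v$ is the highest weight vector of $V$, of weight $m$, then $\phi(v)\neq 0$ makes $(\im\phi)_m=\phi(V_m)$ a non-zero one-dimensional maximal weight space, so $\phi(v)$ is a unique highest weight vector of $\im\phi$ and $\defect(\im\phi)$ is defined. Since $\phi$ is also $B$-equivariant, the $KB$-submodule generated by $\phi(v)$ is $B\phi(v)=\phi(Bv)$, and $\phi$ carries the weight decomposition of $Bv$ onto that of $\phi(Bv)$, so $(B\phi(v))_{m-2d}=\phi\bigl((Bv)_{m-2d}\bigr)$ for every $d$; this is zero whenever $(Bv)_{m-2d}$ is, giving $\defect(\im\phi)\subseteq\defect(V)$. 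The last sentence of (i) follows by taking $\phi$ to be the quotient map, which does not annihilate $v$ precisely when $V$ and $W$ have the same highest weight — the case relevant to our applications (for instance $\nabla^\lambda$ is such a quotient of $\Wedge^{\lambda'}$, via $\abs{t}\mapsto\polyt(t)$, which sends highest weight vector to highest weight vector).

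For part~(ii) the first task is again to locate the highest weight vector of $V\otimes W$. I would use that the matrix $J$ of \Cref{lemma:SL2duality} conjugates $T$ by inversion, so on any $\KSLK$-module the set of weights is symmetric about $0$; combined with maximality of $m_V$ and of $m_W$ this confines the weights of $V$ and of $W$ to $\set{-m_V,\ldots,m_V}$ and $\set{-m_W,\ldots,m_W}$. The weights of $V\otimes W$ are then the sums of these, all lying in an interval of length $2(m_V+m_W)<\abs{K}-1$; consequently no two are congruent modulo $\abs{K}-1$ and all are admissible weights, so there is no wrap-around, $(V\otimes W)_{m_V+m_W}=V_{m_V}\otimes W_{m_W}$ is one-dimensional and maximal, and $v\otimes w$ is a unique highest weight vector. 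For the inclusion of defect sets I would note that $Bv\otimes Bw$ is a $KB$-submodule of $V\otimes W$ containing $v\otimes w$, so $B(v\otimes w)\subseteq Bv\otimes Bw$; that $B$ preserves weight-parity (immediate from centrality of $-I$ in $\SL_2(K)$ when $\mathrm{char}\,K\neq 2$, and a direct check otherwise), so that every weight of $Bv$ is congruent to $m_V$ and every weight of $Bw$ to $m_W$ modulo $2$; and that, using the absence of wrap-around once more,
\[
\bigl(B(v\otimes w)\bigr)_{m_V+m_W-2d}
  \subseteq\bigl(Bv\otimes Bw\bigr)_{m_V+m_W-2d}
  =\bigoplus_{d_1+d_2=d}(Bv)_{m_V-2d_1}\otimes(Bw)_{m_W-2d_2}.
\]
A non-zero left-hand side then yields some $d_1\in\defect(V)$ and $d_2\in\defect(W)$ with $d_1+d_2=d$, so $\defect(V\otimes W)\subseteq\defect(V)+\defect(W)$.

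The only genuinely delicate point is the weight bookkeeping over finite $K$: one must be sure that in (ii) the sums $r+s$ really determine honest, non-congruent weights within the restricted range, which is exactly what the hypothesis $\abs{K}-1>2(m_V+m_W)$ guarantees, together with the small check that $B$ preserves weight-parity. Everything else is routine equivariance and weight-space tracking.
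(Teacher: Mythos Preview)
Your proof is correct and follows essentially the same approach as the paper, which gives only a one-line argument deferring to \Cref{lemma:necessary_and_sufficient_condition_for_W_weights} (using that $\phi(v)$, respectively $v\otimes w$, is a unique highest weight vector, and that $M_\gamma$ commutes with $\phi$ and distributes over $\otimes$). Your version is more explicit---working directly with the containment $B(v\otimes w)\subseteq Bv\otimes Bw$ rather than the single-$M_\gamma$ criterion---and more careful than the paper about the finite-field wraparound and the weight-parity step needed to turn $r_1+r_2=m_V+m_W-2d$ into $d_1+d_2=d$; these points are left implicit in the paper.
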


\begin{proof}
This follows easily from \Cref{lemma:necessary_and_sufficient_condition_for_W_weights}, using in \ref{item:defect_sets_for_images}
that if $v$ is a highest weight vector in $V$ then $\phi(v) \neq 0$ is a highest weight vector in~$W$;
and in~\ref{item:defect_sets_for_tensors} that if also $w$ is a highest weight vector in $W$ then, by the hypothesis on the field size, $v \otimes w$ is a highest weight vector in $V \otimes W$.
\end{proof}

\subsection{Symmetric powers and carry-free sums}

In this subsection we identify the defect sets for iterated symmetric powers.
This prepares the ground for the proof of \Cref{thm:hookObstructions}, and also yields \Cref{prop:symDuals}, characterising when symmetric powers are isomorphic to their duals, and Propo\-sition 6.12, demonstrating that our \Cref{cor:Hermite_reciprocity} is the unique modular generalisation of Hermite reciprocity.

For \(a \in \set{0,\ldots, \ell}\), let \((X^{\otimes \ell-a} \otimes Y^{\otimes a})_\sym \in \Sym_\ell E\) be the sum of all~\(\binom{\ell}{a}\)
pure tensors \(Z_1 \otimes \cdots \otimes Z_\ell\) where exactly \(\ell-a\) of the factors are \(X\) and the remaining \(a\) are \(Y\).

Binomial and multinomial coefficients will frequently appear when expanding the action of matrices \(M_\gamma\) on symmetric powers.
To determine when these coefficients are non-zero modulo \(p\), we require the notion of carry-free sums.

\begin{definition}
Let \(a_1, \ldots, a_s \in \N_0\), and write \(a_i^{(j)}\) for the base \(p\) digit of~\(a_i\) corresponding to the power of \(p^j\).
We say that the sum \(a_1 + \cdots + a_s\) is \emph{carry-free in base \(p\)} if \(a_1^{(j)} + \dots + a_s^{(j)} \leq p-1\) for all \(j\).
For \(a, l \in \N_0\), we say that \(a\) is a \emph{carry-free summand} of \(l\), denoted \(a \cfreesummand l\), if \(a \leq l\) and the sum \(a + (l-a)\) is carry-free.
\end{definition}

Equivalently, $a_1 + \cdots + a_s$ is carry-free in base $p$
if the sum can be computed in base \(p\) without carrying, by the usual algorithm taught in schools for base~\(10\). 
Lucas's Theorem (see for instance \cite[Lemma~22.4]{James}) states that the binomial coefficient \(\binom{l}{a}\) is non-zero modulo \(p\) if and only if \(a \cfreesummand l\), and more generally that the multinomial coefficient \(\binom{a_1 + \cdots + a_s}{a_1, \ldots, a_s}\) is non-zero modulo \(p\) if and only if the sum \(a_1 + \cdots + a_s\) is carry-free.

\begin{lemma}\label{lemma:defectSetSym}
Let \(\ell \in \N_0\) and suppose \(\abs{K} > 1+ 2\ell\).
Then:
\begin{thmlist}
\item
    \(\defect(\Sym_\ell E) = \set{0,\ldots, \ell}\);
\item
    \(\defect(\Sym^\ell\! E) = \setbuild{d \in \set{0,\ldots, \ell}}{ d \cfreesummand \ell }\).
\end{thmlist}
\end{lemma}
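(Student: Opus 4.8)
The plan is to apply the criterion of \Cref{lemma:necessary_and_sufficient_condition_for_W_weights}: for a \(\KSLK\)-module \(V\) with unique highest weight vector \(v\) of weight \(m\), an integer \(d\) lies in \(\defect(V)\) if and only if \(M_\gamma v\) has nonzero component in \(V_{m-2d}\) for some \(\gamma \in K\). First I would record the weight data, recalling that \(X\) has weight \(1\) and \(Y\) has weight \(-1\): the module \(\Sym_\ell E = (E^{\otimes \ell})^{S_\ell}\) has weight vector basis \(\setbuild{(X^{\otimes \ell-a} \otimes Y^{\otimes a})_\sym}{0 \leq a \leq \ell}\) with \((X^{\otimes \ell-a} \otimes Y^{\otimes a})_\sym\) of weight \(\ell - 2a\), while \(\Sym^\ell E\) has weight vector basis \(\setbuild{X^{\ell-a}Y^a}{0 \leq a \leq \ell}\) with \(X^{\ell-a}Y^a\) of weight \(\ell-2a\); in both cases the weights occurring are \(\ell, \ell-2, \ldots, -\ell\). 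Then I would note that \(|K| > 1 + 2\ell\) is exactly what places all of these integers in the permitted range of weights (for a finite field \(K\) one checks the cases \(|K|\) odd and \(|K|\) a \(2\)-power against the restricted ranges recalled above) and makes \(X^{\otimes \ell}\), respectively \(X^\ell\), the unique highest weight vector, of weight \(m = \ell\); so \(\defect\) is defined in each case, and since no weight below \(-\ell\) occurs, \(\defect(V) \subseteq \{0, \ldots, \ell\}\) automatically.

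Next I would run the two computations. Since \(M_\gamma X = X + \gamma Y\) and \(M_\gamma Y = Y\), in \(\Sym_\ell E\) we have \(M_\gamma X^{\otimes \ell} = (X + \gamma Y)^{\otimes \ell} = \sum_{a=0}^\ell \gamma^a (X^{\otimes \ell-a} \otimes Y^{\otimes a})_\sym\), so the weight-\((\ell-2d)\) component of \(M_\gamma X^{\otimes \ell}\) is \(\gamma^d (X^{\otimes \ell-d} \otimes Y^{\otimes d})_\sym\), which for \(\gamma = 1\) is a nonzero basis vector; hence by \Cref{lemma:necessary_and_sufficient_condition_for_W_weights} every \(d \in \{0, \ldots, \ell\}\) lies in \(\defect(\Sym_\ell E)\), proving (i). In the quotient \(\Sym^\ell E\) the same calculation gives \(M_\gamma X^\ell = (X + \gamma Y)^\ell = \sum_{a=0}^\ell \binom{\ell}{a} \gamma^a X^{\ell-a} Y^a\), so the weight-\((\ell-2d)\) component is \(\binom{\ell}{d} \gamma^d X^{\ell-d} Y^d\); as \(\gamma\) ranges over \(K\) this is nonzero for some \(\gamma\) exactly when \(\binom{\ell}{d} \neq 0\) in \(K\), i.e.\ \(p \nmid \binom{\ell}{d}\), which by Lucas's Theorem (\cite[Lemma~22.4]{James}) is equivalent to \(d \cfreesummand \ell\). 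Thus \(\defect(\Sym^\ell E) = \setbuild{d \in \{0, \ldots, \ell\}}{d \cfreesummand \ell}\), proving (ii).

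I do not expect a real obstacle: part (i) and the \(\ell = p^\alpha\) case of part (ii) are already carried out in \Cref{ex:defectSet}, and the general case of (ii) adds only the appeal to Lucas's Theorem converting non-vanishing of \(\binom{\ell}{d}\) into the carry-free condition. The two points needing a little care are the bookkeeping that \(|K| > 1 + 2\ell\) genuinely fits every relevant weight into the allowed window over a finite field, and noting the conceptual contrast between the parts: in \(\Sym_\ell E\) the symmetrization \((X^{\otimes \ell-a} \otimes Y^{\otimes a})_\sym\) is itself a basis vector and so can never be obstructed, whereas in \(\Sym^\ell E\) the \(\binom{\ell}{a}\) rearrangements collapse onto the single monomial \(X^{\ell-a}Y^a\), and it is precisely this collapse that introduces the binomial coefficient.
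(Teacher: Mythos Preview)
Your proposal is correct and follows essentially the same approach as the paper: identify the unique highest weight vectors \(X^{\otimes \ell}\) and \(X^\ell\), expand \(M_\gamma\) acting on them, and read off the defect set via \Cref{lemma:necessary_and_sufficient_condition_for_W_weights} and Lucas's Theorem. The paper's proof is terser and omits your discussion of the field-size bookkeeping and the conceptual contrast between (i) and (ii), but the mathematical content is the same.
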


\begin{proof}
A highest weight vector of 
\(\Sym_\ell E\) is \(X^{\tensor l}\) and a highest weight vector of \(\Sym^l\! E\) is \(X^\ell\).
A simple calculation yields
\begin{align*}
M_\gamma(X^{\otimes \ell})
    &= \sum_{d=0}^\ell \gamma^d (X^{\otimes \ell-d}Y^{\otimes d})_\sym, \\
M_\gamma(X^\ell)
    &= \sum_{d=0}^\ell \gamma^d \binom{\ell}{d} X^{\ell -d}Y^d.
\end{align*}
Note that \(X^{\tensor l - d} \tensor Y^{\tensor d}\) and \(X^{\ell -d}Y^d\) have weight \(\ell - 2d\); using \Cref{lemma:necessary_and_sufficient_condition_for_W_weights} and Lucas's Theorem mentioned above,
the defect sets are then clear.
\end{proof}

The part of the following proposition for fields of characteristic zero is well-known and is included for logical completeness.

\begin{proposition}\label{prop:symDuals}
Let \(\ell \in \N_0\) and suppose \(\abs{K} > 1 + 2\ell\). 
Then \(\Sym^\ell\! E \cong \Sym_\ell\bs E\) if and only if
\(\ell < p\)
or
\(\ell = p^\epsilon -1\) for some \(\epsilon \in \N\).
If \(K\) is replaced with a field of characteristic zero then \(\Sym^\ell\! E \cong \Sym_\ell\bs E\) for any \(\ell\).
\end{proposition}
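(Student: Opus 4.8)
The plan is to prove the two implications by rather different routes, and then to observe that the characteristic-zero case is classical.

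\emph{Forward implication.} Suppose \(\Sym^\ell\! E \iso \Sym_\ell\! E\). First I would note that the defect set of \Cref{defn:defectSet} is an isomorphism invariant: since both modules have a one-dimensional highest weight space (of weight \(\ell\), and within the permitted range because \(\abs{K} > 1 + 2\ell\)), any isomorphism carries a highest weight vector of one onto a nonzero multiple of a highest weight vector of the other, hence carries the \(K\Borel\)-submodule it generates onto the corresponding submodule, so the occurring weights agree. Thus \(\defect(\Sym^\ell\! E) = \defect(\Sym_\ell\! E)\), and \Cref{lemma:defectSetSym} forces \(\setbuild{d \in \set{0,\ldots,\ell}}{d \cfreesummand \ell} = \set{0,\ldots,\ell}\); equivalently, by Lucas's theorem, \(\binom{\ell}{d} \neq 0\) in \(K\) for every \(0 \le d \le \ell\). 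A short base-\(p\) digit count then finishes this direction: writing \(\ell = \sum_{i=0}^{\kc}\ell^{(i)}p^i\) with \(\ell^{(\kc)} \neq 0\), if some digit \(\ell^{(\jc)}\) with \(\jc < \kc\) were strictly less than \(p-1\), then \(d = (\ell^{(\jc)}+1)p^{\jc}\) would lie in \(\set{0,\ldots,\ell}\) yet fail \(d \cfreesummand \ell\), a contradiction. Hence every base-\(p\) digit of \(\ell\) below the leading one equals \(p-1\), which is precisely the stated condition (the case \(\kc = 0\) being \(\ell < p\), and the case of all digits equal to \(p-1\) being \(\ell = p^\epsilon - 1\)).

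\emph{Converse.} The key point is that in each permitted case \(\Sym^\ell\! E\) is an irreducible \(\KSLK\)-module: for \(\ell < p\) this is classical, while \(\Sym^{p^\epsilon-1}\! E\) is the Steinberg module, whose simplicity can be checked directly --- the divided-power operators \(f^{(d)}\) of the hyperalgebra send the highest weight vector \(X^{p^\epsilon-1}\) to \(\binom{p^\epsilon-1}{d}X^{p^\epsilon-1-d}Y^d\), which is nonzero for all \(d\), so \(X^{p^\epsilon-1}\) generates the whole module, and symmetrically for \(Y^{p^\epsilon-1}\). An irreducible representation of \(\SL_2(K)\) is self-dual, its highest and lowest weights being negatives of one another, and \((\Sym^\ell\! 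E)^\dual \iso \Sym_\ell(E^\dual) \iso \Sym_\ell\! E\) by the discussion after \Cref{prop:SchurContravariantDual} together with \Cref{lemma:SL2duality}; composing these isomorphisms gives \(\Sym^\ell\! E \iso \Sym_\ell\! E\). (For \(\ell < p\) one may alternatively use directly that the symmetrisation map \(\Sym^\ell\! E \to \Sym_\ell\! E\) is a \(\KSLK\)-isomorphism, since \(\ell!\) is invertible in \(K\).) Over a field of characteristic zero the same chain of isomorphisms applies with no restriction on \(\ell\), as \(\Sym^\ell\! E\) is then the \((\ell+1)\)-dimensional irreducible, hence self-dual, representation of \(\SL_2\).

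I expect the main obstacle to be the converse in the Steinberg case: establishing that \(\Sym^{p^\epsilon-1}\! E\) is genuinely irreducible. The divided-power computation sketched above is the cleanest elementary route; alternatively one invokes the standard fact that \(\Sym^{p^\epsilon-1}\! E = L(p^\epsilon-1)\) is simple. The forward implication is comparatively routine once the defect set is recognised as an isomorphism invariant.
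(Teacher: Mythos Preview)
Your forward implication is exactly the paper's: matching defect sets (via \Cref{lemma:defectSetSym}) force every $\binom{\ell}{d}$ to be nonzero modulo $p$, and a base-$p$ digit argument converts this into the stated condition.

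Your converse, however, takes a genuinely different route. The paper simply writes down the canonical composite
\[
\Sym_\ell E \hookrightarrow E^{\otimes \ell} \twoheadrightarrow \Sym^\ell E,
\]
computes it on basis elements as $(X^{\otimes \ell-a}\otimes Y^{\otimes a})_\sym \mapsto \binom{\ell}{a}X^{\ell-a}Y^a$, and observes that this is bijective exactly when all the binomial coefficients are nonzero --- a single explicit map that handles $\ell < p$, $\ell = p^\epsilon - 1$, and characteristic zero uniformly and stays entirely within the paper's elementary framework. You instead argue via irreducibility of $\Sym^{p^\epsilon-1}\! E$ and self-duality of simple $\SL_2$-modules. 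This is correct in outline but incurs two obligations the paper avoids: your irreducibility sketch (the highest and lowest weight vectors generate) does not by itself give simplicity --- you also need that every nonzero submodule contains a highest weight vector, which is a further short argument with the hyperalgebra --- and the step ``irreducible $\Rightarrow$ self-dual'' for $\SL_2(K)$ tacitly relies on classifying simples by highest weight (or an equivalent character argument), machinery the paper never invokes. What your route buys is a structural explanation for the isomorphism; what the paper's buys is an explicit one-line construction requiring no representation-theoretic structure theory.
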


\begin{proof}
The condition that \(\ell < p\) or \(\ell=p^\epsilon -1\) for some \(\epsilon \in \N\) is equivalent to the condition that \(a \cfreesummand \ell\) for all \(a \in \set{0,\ldots, \ell}\): if \(\ell < p\) then we clearly have \(a \cfreesummand \ell\) for all \(a \in \set{0,\ldots, \ell}\);
if \(\ell \geq p\) then \(a \cfreesummand \ell\) for all \(a \in \set{0,\ldots, \ell}\) if and only if all base \(p\) digits of~\(\ell\) are \(p-1\), which is if and only if \(\ell = p^\epsilon - 1\).

By \Cref{lemma:defectSetSym}, if \(\Sym^\ell\! E \cong \Sym_\ell\bs E\) then \(a \cfreesummand \ell\) for all \(a \in \{0,\ldots, \ell\}\), as required.
Conversely,
consider the composition of the canonical maps
\[
\Sym_\ell\bs E \hookrightarrow E^{\otimes \ell} \twoheadrightarrow \Sym^\ell\! E
\]
which sends \((X^{\otimes \ell-a}\otimes Y^{\otimes a})_\sym \in \Sym_\ell \bs E\) to \(\binom{\ell}{a} X^{\ell -a} Y^a\).
Supposing \(a \cfreesummand \ell\) for all \(a \in \set{0,\ldots, \ell}\),
or supposing instead the ground field has characteristic zero, we have that \(\binom{\ell}{a} \not=0\), and so this is an isomorphism.
\end{proof}

\begin{lemma}\label{lemma:defectSetSymSym}
Let \(m, l \in \N_0\) and suppose \(\abs{K} > 1 + 2lm\).
Then:
\begin{align*}
\defect(\Sym_m\bs \Sym_l\bs E) 
    &= \set{0, 1, 2, \ldots, lm}; \\
\defect(\Sym_m\bs \Sym^l\! E) 
    &= \setbuild*{\sum_{j=0}^l jm_j}{\begin{aligned}
        &\text{\(m_0, \ldots, m_l \in \N_0\), \(m_0 + \cdots + m_l = m\),} \\[-5pt]
        &\text{\(j \cfreesummand l\) for all \(j\) such that \(m_j \neq 0\)}
    \end{aligned}};  \\
\defect(\Sym^m\bs \Sym_l\bs E) 
    &= \setbuild*{\sum_{j=0}^l jm_j}{\begin{aligned}
        &\text{\(m_0, \ldots, m_l \in \N_0\), \(m_0 + \cdots + m_l = m\),} \\[-5pt]
        &\text{\(m_0 + \cdots + m_l\) is carry-free}
    \end{aligned}};  \\
\defect(\Sym^m\bs \Sym^l\! E) 
    &= \setbuild*{\sum_{j=0}^l jm_j}{\begin{aligned}
        &\text{\(m_0, \ldots, m_l \in \N_0\), \(m_0 + \cdots + m_l = m\),} \\[-5pt]
        &\text{\(m_0 + \cdots + m_l\) is carry-free,} \\[-5pt]
        &\text{\(j \cfreesummand l\) for all \(j\) such that \(m_j \neq 0\)}
    \end{aligned}}. 
\end{align*}
\end{lemma}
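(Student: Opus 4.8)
The plan is to treat all four cases uniformly by writing the module in question as \(\mathcal{S}_m W\), where the outer operation \(\mathcal{S}_m\) is \(\Sym_m\) or \(\Sym^m\) and the inner module \(W\) is \(\Sym_l E\) or \(\Sym^l E\). First I would record that \(W\) has the weight-vector basis \(w_0, w_1, \ldots, w_l\), where \(w_d = (X^{\otimes l-d}Y^{\otimes d})_\sym\) in the lower case and \(w_d = X^{l-d}Y^d\) in the upper case, and \(w_d\) has weight \(l-2d\); in particular \(w_0\) is a unique highest weight vector of \(W\), of weight \(l\). Since \(M_\gamma X = X + \gamma Y\) and \(M_\gamma Y = Y\), exactly as in the proof of \Cref{lemma:defectSetSym}, one has
\[
    M_\gamma w_0 = \sum_{d=0}^l c_d\, \gamma^d\, w_d, \qquad
    c_d = \begin{cases} 1 & \text{if } W = \Sym_l E, \\[2pt] \binom{l}{d} & \text{if } W = \Sym^l E, \end{cases}
\]
and by Lucas's Theorem \(c_d \not\equiv 0 \pmod p\) precisely when \(W = \Sym_l E\), or \(W = \Sym^l E\) and \(d \cfreesummand l\).

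Next I would observe that \(\mathcal{S}_m W\) has a unique highest weight vector \(v\), of weight \(lm\), namely \(v = w_0^{\otimes m}\) when \(\mathcal{S}_m = \Sym_m\) and \(v = w_0^m\) (the \(m\)-fold product in \(\Sym^m W\)) when \(\mathcal{S}_m = \Sym^m\); the hypothesis \(\abs{K} > 1 + 2lm\) ensures that \(T\) acts diagonalisably and that all weights occurring, which lie between \(-lm\) and \(lm\), are admissible, just as in \Cref{lemma:defectSetSym}. So \(\defect(\mathcal{S}_m W)\) is defined and equals \(\setbuild{D \in \N_0}{(\Borel v)_{lm - 2D} \neq 0}\). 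I would then expand \(M_\gamma v\): in the lower outer case, working in the ambient \(W^{\otimes m}\),
\[
    M_\gamma v = (M_\gamma w_0)^{\otimes m} = \sum_{d_1, \ldots, d_m = 0}^{l} \gamma^{\,d_1 + \cdots + d_m} \Bigl(\prod_{i=1}^m c_{d_i}\Bigr)\, w_{d_1} \otimes \cdots \otimes w_{d_m},
\]
and in the upper outer case, expanding \((M_\gamma w_0)^m\) by the multinomial theorem in the symmetric algebra,
\[
    M_\gamma v = (M_\gamma w_0)^m = \sum_{m_0 + \cdots + m_l = m} \binom{m}{m_0, \ldots, m_l}\, \gamma^{\,\sum_{j} j m_j} \Bigl(\prod_{d=0}^l c_d^{\,m_d}\Bigr) \prod_{d=0}^l w_d^{\,m_d}.
\]

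The key point is that in either case the weight-\((lm - 2D)\) component of \(M_\gamma v\) is \(\gamma^D\) times a \(\gamma\)-independent linear combination of pairwise-distinct basis elements — the tensors \(w_{d_1} \otimes \cdots \otimes w_{d_m}\) with \(d_1 + \cdots + d_m = D\), or the monomials \(\prod_d w_d^{m_d}\) with \(\sum_d m_d = m\) and \(\sum_j j m_j = D\). By \Cref{lemma:necessary_and_sufficient_condition_for_W_weights}, and since \(K\) contains a nonzero element, it follows that \(D \in \defect(\mathcal{S}_m W)\) if and only if this combination is nonzero, that is, if and only if at least one of its coefficients is nonzero modulo \(p\). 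Invoking Lucas's Theorem to evaluate \(c_d\) and \(\binom{m}{m_0,\ldots,m_l}\) modulo \(p\) (the latter nonzero exactly when the sum \(m_0 + \cdots + m_l\) is carry-free), and in the lower outer case reparametrising a tuple \((d_1, \ldots, d_m)\) by its multiplicities \(m_j = \abs{\setbuild{i}{d_i = j}}\) — so that \(D = \sum_j j m_j\), \(\sum_j m_j = m\), and the condition ``\(c_{d_i} \neq 0\) for all \(i\)'' becomes ``\(j \cfreesummand l\) for every \(j\) with \(m_j \neq 0\)'' — one reads off the set of admissible \(D\) in each of the four combinations of inner and outer operation; these are exactly the four sets in the statement. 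In particular, \(\Sym_m \Sym_l E\) (lower outer, lower inner) carries no divisibility constraint, so its defect set is \(\set{0, 1, \ldots, lm}\).

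This argument uses no ingredient beyond \Cref{lemma:necessary_and_sufficient_condition_for_W_weights}, \Cref{lemma:defectSetSym}, and Lucas's Theorem. The only point demanding care — and the step I would expect to be the main, if modest, obstacle — is the bookkeeping in the upper outer case: one must correctly combine the inner binomials \(c_d = \binom{l}{d}\) with the outer multinomial coefficient \(\binom{m}{m_0, \ldots, m_l}\), and check that each weight-graded slice of \(M_\gamma v\) is a single power of \(\gamma\) multiplied by an honestly graded, \(\gamma\)-free coefficient, so that its vanishing is detected coefficient by coefficient.
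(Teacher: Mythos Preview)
Your proof is correct and follows essentially the same approach as the paper: expand \(M_\gamma\) applied to the highest weight vector, observe that the weight-\((lm-2D)\) component is \(\gamma^D\) times a linear combination of distinct basis elements, and read off nonvanishing via Lucas's Theorem. The paper carries out only the \(\Sym^m \Sym^l E\) case explicitly and declares the others similar, whereas you give a uniform treatment of all four cases, but the method is identical.
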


\begin{proof}
We compute \(\defect(\Sym^m\bs \Sym^l\bs E)\).
The highest weight vector is \((X^l)^m\) of weight \(lm\), so it suffices to consider the expansion
\begin{align*}
\bigl( (X+\gamma Y)^l \bigr)^m
    &= \left( \sum_{j=0}^l \binom{l}{j} \gamma^j X^{l-j}Y^j \right)^m \\
    &= \sum_{\substack{m_0, \ldots, m_l \in \N_0 \\ m_0 + \cdots + m_l = m}}
        \binom{m}{m_0, \ldots, m_l} \prod_{j=0}^l \left( \binom{l}{j} \gamma^j X^{l-j} Y^j \right)^{m_j}.
\end{align*}
The vectors of weight \(lm - 2d\) are precisely the elements \(\prod_{j=0}^l (X^{l-j} Y^j)^{m_j}\) where 
\smash{\(\sum_{j=0}^l jm_j = d\)}, and such an element appears with non-zero coefficient in this expansion if and only if the corresponding binomial and multinomial coefficients are non-zero.
Lucas's Theorem then yields the claimed defect set. The other parts follow similarly, with the binomial and/or multinomial coefficients not appearing in the expansion when the first and/or second symmetric powers are lower respectively.
\end{proof}

\begin{example}\label{ex:defectSetSymSym}
Let \(\alpha, \beta \geq 1\) and suppose \(\abs{K} > 1 + 2p^{\alpha+\betass}\).
We compute the defect set of \(\Sym^{p^\alpha} \!\Sym^{p^\betaxss} \!E\).
Consider non-negative integers \(m_0, \ldots, m_{p^\betaxss}\) summing to \(p^\alpha\) such that this sum is carry-free and that the only non-zero summands are indexed by carry-free summands of \(p^\betass\).
The only carry-free summands of a power of \(p\) are \(0\) and itself, so by the first condition we have \(m_i = p^\alpha\) for some \(i\) and \(m_j = 0\) for all other \(j\), and by the second condition we have \(m_k=0\) unless \(k \in \set{0, p^\betass}\).
Thus \(\defect( \Sym^{p^\alpha} \!\Sym^{p^\betass} \!E) = \set{0, p^{\alpha + \betass}}\).
\end{example}

\begin{proposition}\label{prop:converseHermite}
Let \(\eps > 1\) and suppose \(\abs{K} > 1 + 2p^{\eps+1}\).
The eight modules obtained from \(\Sym^p \Sym^{p^\eps}\! E\) by exchanging the order of the symmetric powers and replacing upper symmetric powers with lower symmetric powers
are pairwise non-isomorphic, with the exceptions of
\(\Sym_p \Sym^{p^\eps}\! E \cong \Sym^{p^\eps} \Sym_p\bs E\) and its dual \( \Sym^p \Sym_{p^\eps}\bs E \cong \Sym_{p^\eps} \Sym^p\! E\),
and the possible exceptions of an isomorphism
$\Sym^p \Sym^{p^\eps} \hskip-0.32em E \cong \Sym^{p^\eps} \hskip-0.1em \Sym^p \hskip-0.18em E$
and its dual
$\Sym_p \Sym_{p^\eps} \! E \cong \Sym_{p^\eps} \Sym_p \! E$.
Thus there are either four or six isomorphism classes of modules.
If $p=2$ the possible exceptions do not occur and there are precisely six isomorphism classes of modules. 
\end{proposition}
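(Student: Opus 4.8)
The engine of this proof is the defect set together with \Cref{lemma:defectSetSymSym}. Each of the eight modules in the statement has highest weight $p^{\eps+1}$, so the hypothesis $\abs{K} > 1 + 2p^{\eps+1}$ guarantees that all the defect sets below are defined. To evaluate the four formulas of \Cref{lemma:defectSetSymSym} one needs only two elementary facts about base-$p$ digits: the only carry-free summands of a prime power $p^{k}$ are $0$ and $p^{k}$ (so $j \cfreesummand p^{k}$ forces $j \in \{0, p^{k}\}$); and a carry-free sum of non-negative integers equal to a prime power has all but one summand zero. Substituting $(m,l) = (p, p^{\eps})$ and $(m,l) = (p^{\eps}, p)$, my plan is to compute
\begin{align*}
\defect(\Sym^{p}\Sym^{p^{\eps}}\! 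E) = \defect(\Sym^{p^{\eps}}\! \Sym^{p}\! E) &= \{0,\, p^{\eps+1}\}, \\
\defect(\Sym_{p}\Sym^{p^{\eps}}\! E) = \defect(\Sym^{p^{\eps}}\! \Sym_{p}\! E) &= \{\, kp^{\eps} : 0 \le k \le p \,\}, \\
\defect(\Sym^{p}\Sym_{p^{\eps}}\! E) = \defect(\Sym_{p^{\eps}}\! \Sym^{p}\! E) &= \{\, kp : 0 \le k \le p^{\eps} \,\}, \\
\defect(\Sym_{p}\Sym_{p^{\eps}}\! E) = \defect(\Sym_{p^{\eps}}\! \Sym_{p}\! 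E) &= \{0, 1, \ldots, p^{\eps+1}\}.
\end{align*}
Since $\eps > 1$ forces $p < p^{\eps}$, these four sets have the four distinct cardinalities $2$, $p+1$, $p^{\eps}+1$ and $p^{\eps+1}+1$; hence a module of one ``type'' is never isomorphic to a module of another, and the eight modules split into four pairs, the members of each pair sharing a defect set.

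Two of these pairs consist of genuinely isomorphic modules: $\Sym_{p}\Sym^{p^{\eps}}\! E \cong \Sym^{p^{\eps}}\! \Sym_{p}\! E$ and $\Sym_{p^{\eps}}\! \Sym^{p}\! E \cong \Sym^{p}\Sym_{p^{\eps}}\! E$, both instances of \Cref{cor:Hermite_reciprocity} (modular Hermite reciprocity) with $(m,\ell) = (p, p^{\eps})$ and $(p^{\eps}, p)$ respectively. For the remaining two pairs, I would use duality: since $(\Sym^{r} V)^{\dual} \cong \Sym_{r}(V^{\dual})$ (recorded after \Cref{prop:SchurContravariantDual}) and $E^{\dual} \cong E$ over $\SL_2(K)$ (\Cref{lemma:SL2duality}), dualising interchanges $\Sym^{p}\Sym^{p^{\eps}}\! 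E \leftrightarrow \Sym_{p}\Sym_{p^{\eps}}\! E$ and $\Sym^{p^{\eps}}\! \Sym^{p}\! E \leftrightarrow \Sym_{p^{\eps}}\! \Sym_{p}\! E$. As $V \cong W$ precisely when $V^{\dual} \cong W^{\dual}$, the isomorphism $\Sym^{p}\Sym^{p^{\eps}}\! E \cong \Sym^{p^{\eps}}\! \Sym^{p}\! E$ holds if and only if $\Sym_{p}\Sym_{p^{\eps}}\! E \cong \Sym_{p^{\eps}}\! \Sym_{p}\! E$. Consequently either both remaining pairs are pairs of isomorphic modules and there are exactly four isomorphism classes, or neither is and there are six; the intermediate possibility cannot occur.

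It remains to treat $p = 2$, where I must show that the possible exceptions do \emph{not} occur, i.e.\ that $\Sym^{2}\Sym^{2^{\eps}}\! E \not\cong \Sym^{2^{\eps}}\! \Sym^{2}\! E$. The defect set is of no help here (both equal $\{0, 2^{\eps+1}\}$), and — frustratingly — a composition-factor count, carried out via the characteristic-$2$ short exact sequences $0 \to W^{(2)} \to \Sym^{2} W \to \Wedge^{2} W \to 0$ and $0 \to E^{(2)} \to \Sym^{2} E \to K \to 0$ (where $(2)$ denotes Frobenius twist), together with the standard filtration of $\Sym^{2^{\eps}}$ of a module carrying a two-step filtration, shows the two modules have \emph{the same} multiset of composition factors. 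So the non-isomorphism has to be detected by the module structure: the plan is to compare the two modules at the level of their Loewy series, equivalently to compute $\dim \Hom_{\SL_2(K)}(S, -)$ on each side for a suitably chosen irreducible $\SL_2(K)$-module $S$ and exhibit a discrepancy. This structural step for $p=2$ is the main obstacle; granting it, the duality argument above places us in the six-class case and completes the proof.
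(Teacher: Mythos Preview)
Your argument for the general shape of the proposition---the defect-set computations, the use of \Cref{cor:Hermite_reciprocity}, and the duality pairing of the two ``possible exceptions''---is correct and matches the paper exactly.

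The gap is in the $p=2$ case. You correctly identify that defect sets cannot separate $\Sym^{2}\Sym^{2^{\eps}}\! E$ from $\Sym^{2^{\eps}}\Sym^{2}\! E$, and you propose to distinguish them by Loewy structure or by $\dim\Hom_{\SL_2(K)}(S,-)$ for some irreducible $S$. But you do not carry this out, and as you note the composition factors agree, so any such argument would require nontrivial structural information that you have not supplied. The paper's method is much more direct and stays within the weight-space framework already in play: rather than looking at the $K\Borel$-submodule generated by the \emph{highest} weight vector, one compares the $K\Borel$-submodules generated by the entire \emph{$0$-weight space} of each module. In $\Sym^{2^{\eps}}\Sym^{2}\! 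E$ the $0$-weight space is spanned by the monomials $(X^{2})^{2^{\eps-1}-a}(XY)^{2a}(Y^{2})^{2^{\eps-1}-a}$; applying $M_{\gamma}$ gives $(X^{2}+\gamma^{2}Y^{2})^{2^{\eps-1}-a}\bigl((X+\gamma Y)Y\bigr)^{2a}(Y^{2})^{2^{\eps-1}-a}$, in which every factor involves only even powers of $X$ and $Y$, so every weight appearing is $\equiv 0 \pmod 4$. By contrast, in $\Sym^{2}\Sym^{2^{\eps}}\! E$ the $0$-weight space contains $(X^{2^{\eps}-1}Y)\cdot(XY^{2^{\eps}-1})$, and expanding $M_{\gamma}$ applied to this one finds the term $X^{2^{\eps}-1}Y\cdot\gamma Y^{2^{\eps}}$ with coefficient $1$, so the $K\Borel$-submodule meets the $(-2)$-weight space. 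This settles the non-isomorphism with no appeal to submodule lattices or $\Hom$-dimensions.
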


\newlength\setone
\setlength\setone{\widthof{$\{0,1,\ldots,p^{\eps+1} \}$}}
\newlength\settwo
\setlength\settwo{\widthof{$
    \setbuild{jp^\eps}{0 \leq j \leq p}
$}}
\newlength\setthree
\setlength\setthree{\widthof{$
    \setbuild{jp}{0 \leq j \leq p^\eps}
$}}
\newlength\setfour
\setlength\setfour{\widthof{$\{0,p^{\eps+1} \}$}}

\newlength\rowonegap
\setlength\rowonegap{(\settwo-\setone)/2}
\newlength\rowthreegap
\setlength\rowthreegap{(\settwo-\setthree)/2}
\newlength\rowfourgap
\setlength\rowfourgap{(\settwo-\setfour)/2}

\begin{proof}
Calculations using \Cref{lemma:defectSetSymSym} similar to those of \Cref{ex:defectSetSymSym} yield
\begin{align*}
\defect(\Sym_p \Sym_{p^\eps} \!E)
    &= \hspace{\rowonegap} \{0,1,\ldots, p^{\eps+1} \} \hspace{\rowonegap}
    = \defect(\Sym_{p^\eps} \Sym_p\! E), \\
\defect(\Sym_p \Sym^{p^\eps}\!E)
    &=
        \setbuild{jp^\eps}{0 \leq j \leq p}
    = \defect(\Sym^{p^\eps} \Sym_p\! E), \\ 
\defect(\Sym^p \Sym_{p^\eps} \!E)
    &= \hspace{\rowthreegap}
        \setbuild{jp}{0 \leq j \leq p^\eps} \hspace{\rowthreegap}
    = \defect(\Sym_{p^\eps} \Sym^p\! E), \\
\defect(\Sym^p \Sym^{p^\eps}\!E)
    &= \hspace{\rowfourgap} \{0,p^{\eps+1} \} \hspace{\rowfourgap}
    = \defect(\Sym^{p^\eps} \Sym^p\! E).
\end{align*}
Distinctness of defect sets rules out isomorphisms between these modules except those stated in the theorem.
Indeed the first pair of stated isomorphisms hold by modular Hermite reciprocity (\Cref{cor:Hermite_reciprocity}) and its dual.
By the discussion following \Cref{prop:SchurContravariantDual}, \(\Sym_p \Sym_{p^\eps}\bs E \cong (\Sym^p \Sym^{p^\eps}\!E)^\dual\) 
and \(\Sym_{p^\eps} \Sym_p E \cong (\Sym^{p^\eps}\bs \Sym^p\!E)^\dual\), so either both or neither of the possible exceptions occur.
Therefore it remains only to prove, when \(p=2\), that \(\Sym^2 \Sym^{2^\eps}\! E \not\cong \Sym^{2^\eps}\! \Sym^2 E\).

Again we use weight spaces, this time identifying a difference in the \(K \Borel\)-submodules generated by the \(0\)-weight space.
The \(0\)-weight space of \(\Sym^{2^\eps} \Sym^2 E\) is spanned by all
\((X^2)^{2^{\eps-1}-a} \hskip1pt\cdot\hskip1pt (XY)^{2a} \hskip1pt\cdot\hskip1pt (Y^2)^{2^{\eps-1}-a}\) for \(0 \leq a \leq 2^{\eps-1}\).
Applying \(M_\gamma\) to such an element  we get 
\[
(X^2 + \gamma^2 Y^2)^{2^{\eps-1}-a} \cdot \bigl( (X+\gamma Y)Y \bigr)^{2a} \cdot (Y^2)^{2^{\eps-1}-a},
\] 
in which  each factor has only even powers of \(X\) and \(Y\).
Thus the \(K \Borel\)-submodule of \(\Sym^{2^\eps} \Sym^2 E\) generated by the \(0\)-weight space has all weights congruent to \(0\) modulo \(4\).
Meanwhile the \(0\)-weight space of \(\Sym^2 \Sym^{2^\eps}\! E\)
contains \((X^{2^\eps-1}Y) \cdot (XY^{2^\eps-1})\); applying \(M_\gamma\) to this we get \((X+\gamma Y)^{2^\eps-1} Y \cdot (X+\gamma Y)Y^{2^\eps-1}\), whose expansion has \(X^{2^\eps-1} Y \cdot \gamma Y^{2^\eps}\) with coefficient \(1\).
Therefore the \(K \Borel\)-submodule of \(\Sym^2 \Sym^{2^\eps} E\) generated by the \(0\)-weight space has \(-2\) as a weight.
\end{proof}

If we work instead over \(\C\),
all eight modules in \Cref{prop:converseHermite} are isomorphic (by classical Hermite reciprocity and \Cref{prop:symDuals}).

\subsection{Defect sets for hook Schur functors}
\label{subsection:defect_sets_for_nabla}

Our overall strategy is to use defect sets to distinguish the eight modules in \Cref{thm:hookObstructions}.
The reader is invited to refer ahead to \S\ref{subsection:hookObstructions_proof} to see how this is accomplished using the properties of defect sets identified in this subsection and the next.
In this subsection
we study
the defect sets of the modules \(\nabla^{(a+1,1^b)} \Sym^l E\) and \(\nabla^{(a+1,1^b)} \Sym_l E\); in the next, we do the same with \(\Delta\) in place of \(\nabla\).

To identify elements of the defect sets, we need to evaluate the action of \(M_\gamma\) on the highest weight vectors.
Working with \(\nabla^{(a+1, 1^b)}\), we can use the simple multilinear expansion rule for the polytabloids exemplified in \Cref{eg:polytabloid_action_expansion}.
We also need the description of the action of \(M_\gamma\) on the canonical bases of \(\Sym^l E\) and \(\Sym_l E\), given by the following lemma.

\begin{samepage}
\begin{lemma}\label{lemma:Maction}
We have
\begin{thmlist}
\item\label{item:Maction_on_lower_sym}
    \(\displaystyle M_\gamma(X^{\otimes i} \otimes Y^{\otimes l-i})_\sym = \sum_{j=0}^{i} \gamma^{i-j} \binom{l-j}{l-i} (X^{\otimes j} \otimes Y^{\otimes l-j})_\sym\),
\item\label{item:Maction_on_upper_sym}
    \(\displaystyle M_\gamma (X^{i} Y^{l-i}) = \sum_{j=0}^{i} \gamma^{i-j} \binom{i}{j} X^{j} Y^{l-j}\).
\end{thmlist}
\end{lemma}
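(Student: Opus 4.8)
The plan is to prove both identities by direct expansion, using that $M_\gamma$ acts on $E = \langle X, Y\rangle_K$ by $M_\gamma X = X + \gamma Y$ and $M_\gamma Y = Y$ (as already used in \S\ref{sec:weights}, e.g.\ in the computation $M_\gamma X^{p^\alpha} = (X+\gamma Y)^{p^\alpha}$), together with the fact that $M_\gamma$ acts on $E^{\otimes \ell}$ by the diagonal rule and hence acts on $\Sym^\ell E$ by the induced algebra endomorphism of the symmetric algebra. Both computations are characteristic-free and require no hypothesis on $\abs{K}$.

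For part \ref{item:Maction_on_upper_sym}, multiplicativity of the action on $\Sym^\ell E$ gives
\[
M_\gamma(X^i Y^{\ell-i}) = (M_\gamma X)^i (M_\gamma Y)^{\ell - i} = (X + \gamma Y)^i Y^{\ell - i},
\]
and expanding by the binomial theorem yields $\sum_{j=0}^i \binom{i}{j} \gamma^{i-j} X^j Y^{\ell-j}$, which is the asserted formula. This step is routine.

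For part \ref{item:Maction_on_lower_sym}, recall that $(X^{\otimes i} \otimes Y^{\otimes \ell-i})_\sym$ is the sum of the $\binom{\ell}{i}$ pure tensors having $X$ in exactly $i$ of the $\ell$ tensor slots and $Y$ in the remaining slots. Applying the diagonal action of $M_\gamma$ replaces each $X$ by $X + \gamma Y$ and fixes each $Y$; expanding each such factor, a pure tensor having $X$ in a prescribed set $T$ of $j$ slots and $Y$ elsewhere arises, with weight $\gamma^{i-j}$, from an original arrangement whose set $S$ of $X$-slots satisfies $T \subseteq S$ and $\abs{S} = i$. The number of such $S$ is $\binom{\ell - j}{i - j}$, and since $M_\gamma$ commutes with the place-permutation action of $S_\ell$ the total is again symmetric; collecting the arrangements having $X$ in $j$ slots shows that $(X^{\otimes j} \otimes Y^{\otimes \ell-j})_\sym$ occurs with coefficient $\gamma^{i-j} \binom{\ell-j}{i-j} = \gamma^{i-j}\binom{\ell-j}{\ell-i}$. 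Summing over $j \in \{0,\ldots,i\}$ gives the claimed formula. The only point requiring any attention is the combinatorial bookkeeping here --- correctly counting the pairs $(S,T)$ and rewriting $\binom{\ell-j}{i-j}$ as $\binom{\ell-j}{\ell-i}$ --- and even this presents no real obstacle.
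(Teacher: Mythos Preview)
Your proof is correct and follows essentially the same approach as the paper's: part~\ref{item:Maction_on_upper_sym} is the same binomial expansion, and part~\ref{item:Maction_on_lower_sym} is the same direct combinatorial count. The only cosmetic difference is that the paper counts the total number $\binom{\ell}{i}\binom{i}{j}$ of summands with $j$ factors of $X$ and then divides by $\binom{\ell}{j}$, whereas you count directly per target arrangement; your organisation is arguably cleaner since it sidesteps any worry about dividing in positive characteristic.
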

\end{samepage}

\begin{proof}
Part \ref{item:Maction_on_upper_sym} is obvious from expanding \((X+\gamma Y)^{i}Y^{l-i}\).
For part \ref{item:Maction_on_lower_sym}, observe that \(M_\gamma (X^{\otimes i} \otimes Y^{\otimes l-i})_\sym\) is the sum of all \(\binom{\ell}{i}\) tensor products \(Z_1 \otimes \cdots \otimes Z_\ell\) where exactly \(i\) of the factors are \(X+\gamma Y\) and the remaining \(\ell-i\) are \(Y\).
Expanding into pure tensors in \(X\) and \(Y\), there are \(\binom{l}{i} \binom{i}{j}\) summands with \(j\) factors of \(X\) and \(\ell-j\) factors of \(Y\) (each with coefficient \(\gamma^{i-j}\)).
Then since \(\binom{l}{j}\) such summands are required to form \((X^{\otimes j} \otimes Y^{\otimes l-j})_\sym\), the number of times this vector (with coefficient \(\gamma^{i-j}\)) occurs is 
\smash{\(\binom{l}{i} \binom{i}{j} \binom{l}{j}^{-1} = \binom{l-j}{l-i}\)}.
\end{proof}

\begin{lemma}
\label{lemma:defect_set_for_nabla_lowersym}
Let \(a, b, \ell \in \N\) and suppose \(\abs{K} > 1 + 2(a+b+1)\ell - b(b+1)\).
If \(b\not\equiv -1\) mod \(p\), then \(1 \in \defect(\nabla^{(a+1,1^b)} \Sym_\ell E)\).
\end{lemma}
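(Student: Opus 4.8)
The plan is to exhibit the unique highest weight vector of $\nabla^{(a+1,1^b)}\Sym_\ell E$ explicitly, apply a unipotent lower-triangular matrix $M_\gamma$ to it, and show that the coefficient of $\gamma$ in the result is a nonzero vector. Since $\polyt(\tmax)$ has weight $m$ and $M_\gamma$ acts within each box by the formula of \Cref{item:Maction_on_lower_sym}, this $\gamma^1$-coefficient is precisely the component of $M_\gamma\polyt(\tmax)$ in the weight space $m-2$, so its non-vanishing together with \Cref{lemma:necessary_and_sufficient_condition_for_W_weights} gives $1 \in \defect(\nabla^{(a+1,1^b)}\Sym_\ell E)$. (We assume $b < \ell$, which is the case needed in the sequel; if $b = \ell$ the first column of $\lambda=(a+1,1^b)$ is as long as $\dim\Sym_\ell E = \ell+1$ and one argues separately.)

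First I would fix the weight-vector basis $v_i = (X^{\otimes i-1}\otimes Y^{\otimes \ell-i+1})_\sym$ of $\Sym_\ell E$ for $1 \le i \le \ell+1$, where $v_i$ has weight $2(i-1)-\ell$, so the weights are strictly increasing. By \Cref{item:weight_vectors_for_Schur_functors} — whose field-size hypothesis follows from ours, since $1 + 2(a+b+1)\ell - b(b+1) \ge 1 + 2\bigl((a+b+1)\ell - b(b+1)\bigr)$ — a unique highest weight vector is $\polyt(\tmax)$, where $\tmax$ is the semistandard $(a+1,1^b)$-tableau whose first column reads $\ell+1-b, \ell+2-b, \ldots, \ell+1$ from top to bottom and whose remaining $a$ boxes all contain $\ell+1$.

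Next I would compute $M_\gamma\polyt(\tmax)$ using the multilinear expansion of \Cref{eg:polytabloid_action_expansion} together with \Cref{item:Maction_on_lower_sym}. The $\gamma^1$-coefficient is a sum over boxes $c$ of $[\lambda]$ of a coefficient from \Cref{item:Maction_on_lower_sym} times $\polyt(\tmax^{(c)})$, where $\tmax^{(c)}$ is obtained from $\tmax$ by replacing the entry at $c$ with the next smaller basis index. If $c$ lies in the first column but not in the top row, then $\tmax^{(c)}$ has a repeated entry in that column, so $\polyt(\tmax^{(c)})=0$. The surviving contributions are: from the top box $(1,1)$, where the entry $\ell+1-b$ drops to $\ell-b$ with coefficient $\binom{b+1}{b}=b+1$, giving the polytabloid $P_1$ of a semistandard tableau; and from each of the $a$ one-box columns, where the entry $\ell+1$ drops to $\ell$ with coefficient $1$, giving in every case the \emph{same} polytabloid $P_2$ — here one uses that $\symt$, hence $\polyt$, depends only on the multiset of entries in each row, while $\CPP(\lambda)$ permutes only the first column; moreover the sorted version of this lowered tableau is semistandard. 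Thus the $\gamma^1$-coefficient equals $(b+1)P_1 + a\,P_2$.

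Finally, $P_1$ and $P_2$ are the polytabloids of two distinct semistandard $(a+1,1^b)$-tableaux (their first columns already differ), so they are linearly independent by \Cref{prop:ssytBasis}. Since $b \not\equiv -1 \pmod p$ we have $b+1 \ne 0$ in $K$, hence $(b+1)P_1 + a\,P_2 \ne 0$; taking any $\gamma \in K^\times$ then shows $M_\gamma\polyt(\tmax)$ has nonzero component in the weight space $m-2$, and \Cref{lemma:necessary_and_sufficient_condition_for_W_weights} yields $1 \in \defect(\nabla^{(a+1,1^b)}\Sym_\ell E)$. The main obstacle is the bookkeeping of the third paragraph: verifying that every lowering inside the long first column (below the top) dies, and that all $a$ one-box lowerings produce one and the same semistandard polytabloid — it is this that stops the two surviving terms from cancelling.
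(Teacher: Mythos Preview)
Your proof is correct and follows the same approach as the paper's: expand $M_\gamma\polyt(\tmax)$ multilinearly using \Cref{lemma:Maction}\ref{item:Maction_on_lower_sym} and show the weight-$(m-2)$ component is nonzero via the factor $b+1$. The only difference is that the paper tracks just the coefficient of the single semistandard polytabloid obtained by lowering the top-left entry of $\tmax$ (your $P_1$), finding it to be $(b+1)\gamma$, rather than computing the full $\gamma^1$-coefficient $(b+1)P_1 + aP_2$; your caveat about the edge case $b=\ell$ applies equally to the paper's argument, which also implicitly needs $\ell-b-1\ge 0$.
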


\begin{proof}
Let \(\tmax\) be the $(a+1,1^b)$-tableau labelling the highest weight vector of \(\nabla^{(a+1,1^b)} \Sym_l E\) identified in \Cref{prop:weight_vector_bases_for_Schur_functors_and_duals}; by this proposition,
its weight is $(a+1)\ell + (\ell-1) + \cdots + (\ell-b) = (a+b+1)\ell - b(b+1)/2$, whence the bound on $|K|$.
Let \(s\) be the tableau obtained from \(\tmax\) 
by reducing the entry in the top-left corner by~\(1\).
That is,
\[
\begin{tikzpicture}[x=1.3cm,y=-0.8cm,line width=0.4pt]
    \node at (-0.5, 2.5) {\(\tmax = \)};
    \draw(0, 0)--(3.8, 0); \draw(0, 1)--(3.8, 1);
    \draw(0, 2)--(1, 2);
    \draw(0, 3)--(1, 3);
    \draw(0, 4)--(1, 4);
    \draw(0, 5)--(1, 5);
    \draw(0, 0)--(0, 5); \draw(1,0)--(1, 5);
    \draw(2, 0)--(2, 1);
    \draw(2.8, 0)--(2.8, 1);
    \draw(3.8, 0)--(3.8, 1);
    \node at (0.5, 0.5) {\(
        l-b
    \)};
        \node at (1.5,0.5) {\(
            l
        \)};
        \node at (2.4,0.5) {\(\cdots\)};
        \node at (3.3,0.5) {\(
            l
        \)};
    \node at (0.5,1.5) {\(
        l{-}b{+}1
    \)};
    \node at (0.5,2.4) {\(\vdots\)};
    \node at (0.5,3.5) {\(
        l- 1
    \)};
    \node at (0.5,4.5) {\(
        l
    \)};
\end{tikzpicture}
\mkern-10mu
\begin{tikzpicture}[x=1.3cm,y=-0.8cm,line width=0.4pt]
    \node at (-0.65, 2.5) {\!and \(\, s = \)};
    \draw(0, 0)--(3.8, 0); \draw(0, 1)--(3.8, 1);
    \draw(0, 2)--(1, 2);
    \draw(0, 3)--(1, 3);
    \draw(0, 4)--(1, 4);
    \draw(0, 5)--(1, 5);
    \draw(0, 0)--(0, 5); \draw(1, 0)--(1, 5);
    \draw(2, 0)--(2, 1);
    \draw(2.8, 0)--(2.8, 1);
    \draw(3.8, 0)--(3.8, 1);
    \node at (0.5,0.5) {\(
        l{-}b{-}1
    \)};
        \node at (1.5,0.5) {\(
            l
        \)};
        \node at (2.4,0.5) {\(\cdots\)};
        \node at (3.3,0.5) {\(
            l
        \)};
    \node at (0.5,1.5) {\(
        l{-}b{+}1
    \)};
    \node at (0.5,2.4) {\(\vdots\)};
    \node at (0.5,3.5) {\(
        l- 1
    \)};
    \node at (0.5,4.5) {\(
        l
    \)};
\end{tikzpicture}
\]
where an entry of \(i\) corresponds to the basis vector \(v_i = (X^{\tensor i} \tensor Y^{\tensor l-i})_\sym\).

We  compute \(M_\gamma \polyt(t_\maxt)\) by acting on the entry in each box of \(t_\maxt\), 
 as in \Cref{eg:polytabloid_action_expansion}, and then using Garnir relations (see \Cref{defn:Garnir})
  to express the result in the basis of semistandard polytabloids. 
Note that the Garnir relations do not change the multiset of entries of a tableau; thus to identify the coefficient of a semistandard polytabloid, it suffices to consider only those tableaux with the same multiset of entries.
By \Cref{lemma:Maction}\ref{item:Maction_on_lower_sym}, \(M_\gamma v_i = \sum_{j=0}^{i} \gamma^{i-j} \binom{\ell-j}{\ell-i} v_j\).
The  action of \(M_\gamma\) on the entries of \(\tmax\) yields
\begin{center}
\begin{tikzpicture}[x=3.5cm,y=-1.2cm,line width=0.4pt]
	\newcommand{\e}{0.1}
    \draw(0, 0)--(3.5+\e, 0); \draw(0, 1)--(3.5+\e, 1);
    \draw(0, 2)--(1+\e, 2);
    \draw(0, 3)--(1+\e, 3);
    \draw(0, 4)--(1+\e, 4);
    \draw(0, 5)--(1+\e, 5);
    \draw(0, 0)--(0, 5); \draw(1+\e, 0)--(1+\e, 5);
    \draw(2+\e, 0)--(2+\e, 1);
    \draw(2.5+\e, 0)--(2.5+\e, 1);
    \draw(3.5+\e, 0)--(3.5+\e, 1);
    \node at (0.5+\e/2,0.5) {\(
        \sum\limits_{j=0}^{\ell-b} \gamma^{\ell-b-j} \binom{\ell-j}{b} v_j
    \)};
        \node at (1.5+\e,0.5) {\(
            \sum\limits_{j=0}^{\ell} \gamma^{\ell -j} v_{j}
        \)};
        \node at (2.25+\e,0.5) {\(\cdots\)};
        \node at (3+\e,0.5) {\(
            \sum\limits_{j=0}^{\ell}  \gamma^{\ell -j} v_{j}
        \)};
    \node at (0.5+\e/2,1.5) {\(
        \sum\limits_{j=0}^{\mathclap{\ell-b+1}} \gamma^{\ell-b+1-j} \binom{\ell-j}{b-1} v_j 
    \)};
    \node at (0.5+\e/2,2.4) {\(\vdots\)};
    \node at (0.5+\e/2,3.5) {\(
        \sum\limits_{j=0}^{\ell-1} \gamma^{\ell-1-j} \binom{\ell-j}{1} v_j
    \)};
    \node at (0.5+\e/2,4.5) {\(
        \sum\limits_{j=0}^{\ell} \gamma^{\ell-j} v_j
    \)};
\end{tikzpicture}
\end{center}
before multilinear expansion.
Consider how we can choose summands to obtain a tableau with the same multiset of entries as \(s\).
Since $v_\ell$ must occur $a+1$ times, we must choose $v_\ell$ from the sums in the $a+1$ boxes in which it appears;
then \(v_{l-1}\) must occur once, so must be chosen in the only remaining sum in which it appears; and so on,
until we choose $v_{\ell-b+1}$ from the box immediately below the top-left box.
Finally we must choose $v_{\ell-b-1}$ from the box in the top-left.
The coefficients arising from this choice are 
$\binom{b+1}{1} \gamma$ from the top-left box and $1$s from every remaining
box. Since this sequence of choices gives the semistandard tableau $s$,
no rewriting using Garnir relations is necessary,
and it follows that
 the coefficient of $\polyt(s)$ in $M_\gamma \polyt(t_\maxt)$ is
$(b+1)\gamma$; this is non-zero by the hypothesis on $b$.
\end{proof}

\begin{lemma}
\label{lemma:defect_set_for_nabla_uppersym}
Let \(\alpha, \beta, \epsilon \in \N\) with \(\alpha \neq \beta\) and \(\alpha, \beta < \epsilon\).
Suppose \(\abs{K} > 1+2(p^\epsilon+p^\betass)(p^\alpha+p^\betass+1)-p^\betass(p^\betass+1)\).
Then
\begin{thmlist}
\item\label{item:nabla_uppersym_defects}
    \(p^{\betass+\epsilon} - p^\epsilon \in \defect( \nabla^{ (p^\alpha+1, 1^{p^\betaxss}) } \Sym^{p^\epsilon + p^\betass} E )\);
\item\label{item:nabla_uppersym_non-defects}
    \(1,\, p^\alpha,\, p^\betass,\, p^{\alpha+\epsilon} - p^\epsilon \not\in \defect( \nabla^{ (p^\alpha+1, 1^{p^\betaxss}) } \Sym^{p^\epsilon+p^\betass} E )\).
\end{thmlist}
\end{lemma}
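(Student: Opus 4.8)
The plan is to make the highest weight vector of $\nabla^{(p^\alpha+1,1^{p^\beta})}\Sym^{p^\epsilon+p^\beta}\!E$ explicit, compute the action of $M_\gamma$ on it, and read off which weight spaces meet the $K\Borel$-submodule it generates. By \Cref{prop:weight_vector_bases_for_Schur_functors_and_duals} the unique highest weight vector is $\polyt(\tmax)$, where $\tmax$ is the semistandard $(p^\alpha+1,1^{p^\beta})$-tableau whose first column reads $p^\epsilon,p^\epsilon+1,\ldots,p^\epsilon+p^\beta$ from top to bottom and whose remaining $p^\alpha$ boxes each hold $p^\epsilon+p^\beta$, the entry $i$ being identified with the basis vector $X^iY^{p^\epsilon+p^\beta-i}$ of $\Sym^{p^\epsilon+p^\beta}\!E$. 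A short computation gives its weight $m$ and shows that the hypothesis on $\abs K$ is precisely $\abs K>1+2m$, so every weight in $[-m,m]$ is available; by \Cref{lemma:necessary_and_sufficient_condition_for_W_weights}, $d$ belongs to the defect set if and only if for some $\gamma$ the weight-$(m-2d)$ component of $M_\gamma\polyt(\tmax)$ is non-zero.

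To understand $M_\gamma\polyt(\tmax)$ I would use the multilinear expansion of \Cref{eg:polytabloid_action_expansion} together with \Cref{lemma:Maction}\ref{item:Maction_on_upper_sym}: replacing an entry $i$ in a box by $j$ contributes $\gamma^{i-j}\binom{i}{j}$, which by Lucas's Theorem is non-zero modulo $p$ exactly when $j\cfreesummand i$, and the total weight drop is $2\sum(i-j)$ over the boxes; one then straightens into the semistandard basis via the Garnir relations of \Cref{prop:nablaPresentation}, which do not change the multiset of entries. The arithmetic leverage is that, as $\alpha<\beta<\epsilon$, the base-$p$ expansions of the entries $p^\epsilon+p^\beta$ and $p^\epsilon,\ldots,p^\epsilon+p^\beta-1$ are sparse and well separated, so each entry has very few carry-free co-summands; in particular the interior first-column entries are consecutive, so lowering one of them by a small positive amount makes it collide with a neighbour and kills the polytabloid.

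For part (i) the plan is to exhibit one explicit nonzero contribution: lower the first-column entries $p^\epsilon+k\mapsto k$ for $0\le k\le p^\beta-2$, fixing the rest of $\tmax$. This produces a semistandard tableau $s$ of weight $m-2(p^{\beta+\epsilon}-p^\epsilon)$, and processing its entries from the largest down shows it is reached by a unique surviving configuration; its coefficient is $\gamma^{\,p^{\beta+\epsilon}-p^\epsilon}\prod_{k=0}^{p^\beta-2}\binom{p^\epsilon+k}{k}$, and each binomial is $\equiv1\bmod p$ by Lucas, so $p^{\beta+\epsilon}-p^\epsilon$ lies in the defect set.

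For part (ii) I must show the weight-$(m-2d)$ component of $M_\gamma\polyt(\tmax)$ vanishes for each $d\in\{1,p^\alpha,p^\beta,p^{\alpha+\epsilon}-p^\epsilon\}$. For $d=1$ and $d=p^\alpha$ (both less than $p^\beta$) no surviving configuration exists at all: any positive drop on the first-column box holding $p^\epsilon$, on the bottom box, or on one of the $p^\alpha$ boxes holding $p^\epsilon+p^\beta$ is a carry-free-forced jump of size at least $p^\beta>d$, while a positive drop of size at most $d<p^\beta$ on an interior first-column box cannot keep all first-column values inside the $(p^\beta{+}1)$-element block $\{p^\epsilon,\ldots,p^\epsilon+p^\beta\}$, and escaping that block costs at least $p^\epsilon>d$. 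For $d=p^\beta$ the only surviving configuration type is ``one of the $p^\alpha$ boxes holding $p^\epsilon+p^\beta$ is sent to $X^{p^\epsilon}Y^{p^\beta}$''; all $p^\alpha$ choices yield the same semistandard tableau, so their total coefficient is divisible by $p^\alpha$ and hence zero in $K$.

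The case $d=p^{\alpha+\epsilon}-p^\epsilon$ is where I expect the main obstacle: here surviving configurations do exist, and one must enumerate those reaching each semistandard tableau of weight $m-2d$ and establish that the signed sum of their coefficients vanishes modulo $p$. Identifying the precise mechanism of this cancellation — whether it is a determinant of binomial coefficients that reduces to $0\bmod p$ once the Garnir-straightening signs are incorporated, or a sign-reversing involution on configurations, combined again with the $p^\alpha$-fold multiplicity of the ``extra'' boxes — is the delicate point; tracking how a tableau whose corner entry has become small is rewritten into the semistandard basis of a hook is what makes the bookkeeping hard, and what should ultimately force the cancellation is the rigidity imposed by Lucas's Theorem and the spacing $\alpha<\beta<\epsilon$.
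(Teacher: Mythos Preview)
Your argument for part \ref{item:nabla_uppersym_defects} is correct and parallels the paper's: exhibit one semistandard tableau $s$ of the right weight whose entry multiset is reached from $\tmax$ by a unique non-vanishing configuration. Your choice of $s$ (lowering the top $p^\beta-1$ entries of the first column by $p^\epsilon$, keeping $p^\epsilon+p^\beta-1$ and $p^\epsilon+p^\beta$) differs slightly from the paper's (which lowers the middle $p^\beta-1$ entries, keeping $p^\epsilon$ in the top-left and $p^\epsilon+p^\beta$ at the bottom), but the uniqueness check via Lucas's Theorem is the same in spirit.

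For part \ref{item:nabla_uppersym_non-defects} there are two real gaps. First, you assume $\alpha<\beta$, but the lemma only assumes $\alpha\neq\beta$; it is in fact applied in the proof of \Cref{thm:hookObstructions} with the roles of $\alpha$ and $\beta$ exchanged, so both orderings must be handled. When $\alpha>\beta$ your claim that ``no surviving configuration exists'' for $d=p^\alpha$ is simply false: one may lower exactly $p^{\alpha-\beta}$ of the $p^\alpha$ first-row boxes from $p^\epsilon+p^\beta$ to $p^\epsilon$, and these configurations survive (their count $\binom{p^\alpha}{p^{\alpha-\beta}}$ does vanish modulo $p$, but this is additional work you have not done). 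Second, for $d=p^{\alpha+\epsilon}-p^\epsilon$ you offer only speculation; at that weight many configurations survive, and carrying out the straightening and cancellation by hand is exactly the obstacle your plan does not overcome.

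The paper avoids this enumeration entirely by a structural argument. By \Cref{prop:nablaPresentation}, $\nabla^{(p^\alpha+1,1^{p^\beta})} V$ is a quotient of $\bigwedge^{p^\beta+1} V \otimes V^{\otimes p^\alpha}$, and the Garnir relations among the length-one columns show that this quotient factors through $\bigwedge^{p^\beta+1} V \otimes \Sym^{p^\alpha} V$. Then \Cref{lemma:defect_sets_for_images_and_tensors} gives
\[
\defect\bigl(\nabla^{(p^\alpha+1,1^{p^\beta})}\Sym^{p^\epsilon+p^\beta} E\bigr)
\ \subseteq\
\defect\bigl({\textstyle\bigwedge^{p^\beta+1}}\Sym^{p^\epsilon+p^\beta} E\bigr)
\ +\
\defect\bigl(\Sym^{p^\alpha}\Sym^{p^\epsilon+p^\beta} E\bigr).
\]
The Wronskian isomorphism (\Cref{thm:Wronskian}) identifies the first factor with $\Sym_{p^\beta+1}\Sym^{p^\epsilon} E$, and both defect sets are then read off from \Cref{lemma:defectSetSymSym}, yielding an explicit sumset against which the four values of $d$ are checked. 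This sidesteps all the delicate sign-and-straightening bookkeeping that your direct approach would require.
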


\begin{proof}
For part \ref{item:nabla_uppersym_defects}, we consider (as in the proof of \Cref{lemma:defect_set_for_nabla_lowersym}) how we can expand \(M_\gamma \polyt({\tmax})\)
to obtain tableaux with certain multisets of entries.
This time we choose the tableau \(s\) obtained from \(\tmax\) by reducing all the entries in the first column by \(p^\epsilon\), except the first and last.
That is,
\[
\begin{tikzpicture}[x=1.51cm,y=-0.8cm,line width=0.4pt]
    \newcommand{\e}{0.2}
    \node at (-0.4,2.5) {\(\tmax \!= \)};
    \draw(0,0)--(3.2+\e,0); \draw(0,1)--(3.2+\e,1);
    \draw(0,2)--(1+\e,2);
    \draw(0,3)--(1+\e,3);
    \draw(0,4)--(1+\e,4);
    \draw(0,5)--(1+\e,5);
    \draw(0,0)--(0,5); \draw(1+\e,0)--(1+\e,5);
    \draw(1.9+\e,0)--(1.9+\e,1);
    \draw(2.3+\e,0)--(2.3+\e,1);
    \draw(3.2+\e,0)--(3.2+\e,1);
    \node at (0.5+\e/2,0.5) {\(
        p^\epsilon
    \)};
        \node at (1.45+\e,0.5) {\(
            p^\epsilon + p^\betass
        \)};
        \node at (2.1+\e,0.5) {\(\cdots\)};
        \node at (2.75+\e,0.5) {\(
            p^\epsilon + p^\betass
        \)};
    \node at (0.5+\e/2,1.5) {\(
        p^\epsilon + 1
    \)};
    \node at (0.5+\e/2,2.4) {\(\vdots\)};
    \node at (0.5+\e/2,3.5) {\(
        p^\epsilon {+} p^\betass {-} 1
    \)};
    \node at (0.5+\e/2,4.5) {\(
        p^\epsilon + p^\betass
    \)};
\end{tikzpicture}
\mkern-10mu
\begin{tikzpicture}[x=1.51cm,y=-0.8cm,line width=0.4pt]
    \newcommand{\e}{0.05}
    \node at (-0.48,2.5) {and \(s \!= \)};
    \draw(0,0)--(3.2+\e,0); \draw(0,1)--(3.2+\e,1);
    \draw(0,2)--(1+\e,2);
    \draw(0,3)--(1+\e,3);
    \draw(0,4)--(1+\e,4);
    \draw(0,5)--(1+\e,5);
    \draw(0,0)--(0,5); \draw(1+\e,0)--(1+\e,5);
    \draw(1.9+\e,0)--(1.9+\e,1);
    \draw(2.3+\e,0)--(2.3+\e,1);
    \draw(3.2+\e,0)--(3.2+\e,1);
    \node at (0.5+\e/2,0.5) {\(
        p^\epsilon
    \)};
        \node at (1.45+\e,0.5) {\(
            p^\epsilon + p^\betass
        \)};
        \node at (2.1+\e,0.5) {\(\cdots\)};
        \node at (2.75+\e,0.5) {\(
            p^\epsilon + p^\betass
        \)};
    \node at (0.5+\e/2,1.5) {\(
        1
    \)};
    \node at (0.5+\e/2,2.4) {\(\vdots\)};
    \node at (0.5+\e/2,3.5) {\(
        p^\betass - 1
    \)};
    \node at (0.5+\e/2,4.5) {\(
        p^\epsilon + p^\betass
    \)};
\end{tikzpicture}
\]
where an entry of \(i\) corresponds to the basis vector \(w_i = X^{i} Y^{p^\epsilon + p^\betass -i} \in \Sym^\ell E\).
By \Cref{lemma:Maction}\ref{item:nabla_uppersym_non-defects},
\(M_\gamma w_i = \sum_{j=0}^{i} \gamma^{i-j} \binom{i}{j} w_j\).
Acting by \(M_\gamma\) on each entry of \(\tmax\) yields
\begin{center}
\scalebox{1}{\begin{tikzpicture}[x=3.6cm,y=-1.2cm,line width=0.4pt]
    \draw(0,0)--(3.2,0); \draw(0,1)--(3.2,1);
    \draw(0,2)--(1,2);
    \draw(0,3)--(1,3);
    \draw(0,4)--(1,4);
    \draw(0,5)--(1,5);
    \draw(0,0)--(0,5); \draw(1,0)--(1,5);
    \draw(1.9,0)--(1.9,1);
    \draw(2.3,0)--(2.3,1);
    \draw(3.2,0)--(3.2,1);
    \node at (0.5,0.5) {\(
            \sum\limits_{j=0}^{p^\epsilon}\gamma^{\star}\binom{p^\epsilon}{j}  w_j
    \)};
        \node at (1.45,0.5) {\(
            \;\,\sum\limits_{j=0}^{\mathclap{p^\epsilon + p^\betaxss}} \gamma^\star
            \binom{p^\epsilon + p^\betaxss}{j}
            w_j
        \)};
        \node at (2.1,0.5) {\(\cdots\)};
        \node at (2.75,0.5) {\(
            \;\,\sum\limits_{j=0}^{\mathclap{p^\epsilon + p^\betaxss}} \gamma^\star 
            \binom{p^\epsilon+p^\betaxss}{j} w_j
        \)};
    \node at (0.5,1.5) {\(
        \;\sum\limits_{j=0}^{p^\epsilon +1} \gamma^\star
        \binom{p^\epsilon+1}{j} w_j
    \)};
    \node at (0.5,2.4) {\(\vdots\)};
    \node at (0.5,3.5) {\(
        \;\;\;\sum\limits_{j=0}^{\mathclap{p^\epsilon + p^\betaxss-1}}\gamma^\star
        \binom{p^\epsilon + p^\betaxss-1}{j} w_j
    \)};
    \node at (0.5,4.5) {\(
        \;\,\sum\limits_{j=0}^{\mathclap{p^\epsilon + p^\betaxss}} \gamma^\star
        \binom{p^\epsilon + p^\betaxss}{j} w_j
    \)};
\end{tikzpicture}}
\end{center}
before multilinear expansion, where $\gamma^\star$ denotes a power of $\gamma$ omitted for reasons of space (whose precise value is not required).
Consider how we can choose summands to obtain a tableau with the same multiset of entries as \(s\). 
As before, 
since \(w_{p^\epsilon + p^\betaxss}\) must 
occur \(p^\alpha +1 \) many times, we must choose $w_{p^\epsilon + p^\betaxss}$ from
the sums in the \(p^\alpha + 1\) boxes in which it appears, which are those at the bottom of each column.

For the remaining $p^\beta$ boxes (those in the first column except the bottom), note that for \(0 \leq i,j < p^\epsilon\), we have 
$\binom{p^\epsilon+i}{j} = \binom{i}{j}$ which is non-zero if and only if $j \cfreesummand i$, which in particular requires \(j \leq i\).
Thus, since \(\beta < \epsilon\), the only remaining sum in which \(w_{p^\betaxss-1}\) 
appears with non-zero coefficient is that in the penultimate box in the first column, so it must be chosen there;
continuing, we must choose 
\(w_{j}\) from the sum in box \((j,1)\) for all \(2 \leq j \leq p^\betass -1\).
Finally, in the top-left box \(w_{p^\epsilon}\) must then be chosen.
Thus there is a unique way to obtain a tableau with the same multiset of entries as \(s\) with non-zero coefficient.
Therefore, writing \(s'\) for the semistandard tableau obtained from \(s\) by sorting the first column into ascending order, the coefficient of \(\polyt(s')\) in $M_\gamma \polyt(\tmax)$ is
non-zero, as required.

For \ref{item:nabla_uppersym_non-defects}, recall that the module \(\nabla^{(p^\alpha+1,1^{p^\betaxss})}\Sym^{p^\epsilon+p^\betaxss}\! E\) is the image of the partition-labelled exterior power
\(\Wedge^{(p^\betass + 1,1^{p^\alpha})} \!\Sym^{p^\epsilon+p^\betaxss}\! E\)
under the canonical quotient map \(\act{t} \mapsto \polyt(t)\).
We claim that this map factors through
\begin{equation}\label{eq:factor_through}
\Wedge^{p^\betass +1}  \Sym^{p^\epsilon+p^\betaxss}\! E \,\otimes\, \Sym^{p^\alpha} \! \Sym^{p^\epsilon + p^\betaxss}\! E.
\end{equation}
Indeed, if \(t\) and \(t'\) are tableaux differing only by swapping two entries in the top row (excluding the top-left box), then, writing \(j\) and \(k\) for the columns of the swapped boxes, by the Garnir relation \(\Grel_{(t,\{(1,j)\},\{(1,k)\}}\) we have \(\polyt(t) = \polyt(t')\).
Thus \(\nabla^{(p^\alpha+1,1^{p^\betaxss})}\Sym^{p^\epsilon+p^\betaxss}\! E\) is a homomorphic image of the module \eqref{eq:factor_through} above, and using both parts of \Cref{lemma:defect_sets_for_images_and_tensors} we have
\begin{align*}
\defect( \nabla^{(p^\alpha+1,1^{p^\betaxss})}\Sym^{p^\epsilon+p^\betaxss}E )
    &\subseteq
    \defect(\Wedge^{p^\betaxss + 1} \Sym^{p^\epsilon+p^\betaxss} \!E)
        + \defect(\Sym^{p^\alpha} \!\Sym^{p^\epsilon + p^\betaxss} \!E).
\end{align*}
The Wronskian isomorphism (\Cref{thm:Wronskian}) gives that \(\Wedge^{p^\betass+1} \Sym^{p^\epsilon+p^\betaxss} E \iso \Sym_{p^\betass + 1} \Sym^{p^\epsilon} \!E\).
The two defect sets on the right-hand side above can then be identified with \Cref{lemma:defectSetSymSym}, yielding
\begin{align*}
 \defect( &\nabla^{(p^\alpha+1,1^{p^\betaxss})}\Sym^{p^\epsilon+p^\betaxss}E ) \\
    &\qquad\subseteq
        \setbuild{cp^\epsilon}{0 \leq c \leq p^\betass +1}
        \,+\,
        \set{0, p^{\alpha + \beta}, p^{\alpha + \epsilon}, p^{\alpha+\beta} + p^{\alpha+\epsilon}}.
\end{align*}
It is clear that \(1\), \(p^\alpha\), \(p^\betass\) and \(p^{\alpha + \epsilon} - p^\epsilon\) are not in this set.
\end{proof}

\subsection{Defect sets for dual hook Schur functors}
\label{subsection:defect_sets_for_Delta}
In this section we show that the module \(\Delta^{(a+1,1^b)} V\) is isomorphic to a submodule of the partition-labelled exterior power \(\Wedge^{b+1}\! V \tensor V^{\tensor a}\), and moreover this submodule contains the highest weight vector. Thus we can compute $\defect(\Delta^{(a+1,1^b)} V)$ by working in 
$\Wedge^{b+1}\! V \tensor V^{\tensor a}$, which has a canonical basis labelled by $(a+1,1^b)$-column tabloids
(see \S\ref{subsec:Schur}).

\begin{lemma}
\label{lemma:Delta_submodule_of_exterior_power}
Let \(V\) be a \(\KSLK\)-module with a basis \(\set{v_1, \ldots, v_l}\) of weight vectors, in which
 \(v_i\) has weight \(r_i\), for some integers \(r_1 \leq \cdots \leq r_{l-1} < r_l\).
Let \(\lambda\) be any partition, and let \(\tmax\) be the semistandard tableau obtained by filling each column from the bottom with integers decreasing from~\(\ell\). Suppose that \(\abs{K} > 1 + 2\sum_{b \in \Y{\lambda}} r_{\tmax(b)}\).
Then \(\Delta^{\lambda} V\) is isomorphic to a submodule of \smash{\(\Wedge^{\lambda'} V\)} containing a unique highest weight vector 
\(\act{\tmax}\) of~\(\Wedge^{\lambda'} V\).
In particular, \(\defect(\Delta^\lambda V) = \defect(\Wedge^{\lambda'} V)\).
\end{lemma}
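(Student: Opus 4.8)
The plan is to dualise the presentation of $\nabla^\lambda$ from \Cref{prop:nablaPresentation}. Recall that $\Delta^\lambda V = (\nabla^\lambda(V^\dual))^\dual$ by \Cref{defn:dualSchurFunctor}, and that \Cref{prop:nablaPresentation} gives a short exact sequence $0 \to \GR^\lambda(V^\dual) \to \Wedge^{\lambda'} V^\dual \to \nabla^\lambda(V^\dual) \to 0$. First I would apply $(-)^\dual$ to obtain an injection $\Delta^\lambda V \into (\Wedge^{\lambda'} V^\dual)^\dual$; then, using the explicit isomorphism $(\Wedge^r W)^\dual \iso \Wedge^r W^\dual$ of \Cref{lemma:exteriorPowerDuality} applied factorwise together with $(V^\dual)^\dual \iso V$ (and recalling $\Wedge^{\lambda'} V = \Wedge^{\lambda'_1} V \tensor \cdots \tensor \Wedge^{\lambda'_a} V$), I identify $(\Wedge^{\lambda'} V^\dual)^\dual \iso \Wedge^{\lambda'} V$. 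This realises $\Delta^\lambda V$ as a submodule of $\Wedge^{\lambda'} V$ — concretely, as the annihilator of the subspace $\GR^\lambda(V^\dual)^\dual$-image, i.e.\ as $\{\,x \in \Wedge^{\lambda'} V : \langle x, \GR^\lambda(V^\dual)\rangle = 0\,\}$ under the pairing coming from the dual bases of column tabloids.

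Next I would pin down the highest weight vector. Under the chain of isomorphisms above, the canonical basis $\{\act{t} : t \in \CSYT_{\{1,\ldots,d\}}(\lambda)\}$ of $\Wedge^{\lambda'} V$ is the basis dual to the canonical column-tabloid basis of $\Wedge^{\lambda'} V^\dual$ (up to signs from \Cref{lemma:exteriorPowerDuality}, which are harmless for the weight computation). Since $T$ acts diagonalisably and each $v_i$ has weight $r_i$, the dual basis vector $v_i^\dual$ in $V^\dual$ has weight $r_i$ by \Cref{item:weight_vectors_for_duals}, and hence each $\act{t} \in \Wedge^{\lambda'} V$ has weight $\sum_{b \in \Y\lambda} r_{t(b)}$ exactly as in \Cref{item:weight_vectors_for_Schur_functors}. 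As there observed, with $\tmax$ the semistandard tableau filling each column from the bottom with integers decreasing from $\ell$, and with the field-size hypothesis $\abs K > 1 + 2\sum_{b} r_{\tmax(b)}$, the vector $\act{\tmax}$ is a \emph{unique} highest weight vector of $\Wedge^{\lambda'} V$. It remains to check $\act{\tmax} \in \Delta^\lambda V$; equivalently that $\act{\tmax}$ pairs to zero with every Garnir relation in $\GR^\lambda(V^\dual)$. This is immediate because every Garnir relation is a linear combination of column tabloids $\act{u}$ for tableaux $u$ \emph{with the same multiset of column sets as $\tmax$}\,; since $\tmax$ is the unique tableau with its column content that is column standard — indeed its columns are forced — and all the $\act u$ in a nonzero Garnir relation labelled by data involving $\tmax$ are column-standardisations of permutations of $\tmax$, the coefficient of $\act{\tmax}$ in any Garnir relation is $0$ whenever that relation is not a tableau built from $\tmax$'s columns; and the Garnir relations that \emph{do} involve $\tmax$'s columns vanish because moving entries between columns of $\tmax$ always creates a repeat. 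Hence $\langle \act{\tmax}, \GR^\lambda(V^\dual)\rangle = 0$, so $\act{\tmax} \in \Delta^\lambda V \subseteq \Wedge^{\lambda'} V$.

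Finally, for the defect-set statement: $\Delta^\lambda V$ is a submodule of $\Wedge^{\lambda'} V$ containing the unique highest weight vector $\act{\tmax}$ of the ambient module, so the two modules have the same highest weight and the same highest weight space, and the $K\Borel$-submodule generated by $\act{\tmax}$ is the same whether computed inside $\Delta^\lambda V$ or inside $\Wedge^{\lambda'} V$. By \Cref{defn:defectSet} this gives $\defect(\Delta^\lambda V) = \defect(\Wedge^{\lambda'} V)$. I expect the main obstacle to be the bookkeeping in the second paragraph — verifying cleanly that $\act{\tmax}$ lies in the annihilator of the span of Garnir relations, i.e.\ that no Garnir relation has $\act{\tmax}$ in its support; this is really the assertion that $\tmax$ is ``column-frozen'', and I would phrase it via: any tableau $u$ appearing in a Garnir relation with $\act u \neq 0$ and with the same column multiset as $\tmax$ must equal $\tmax$, while a Garnir relation moving an entry out of a column of $\tmax$ produces only column tabloids $\act u$ with a repeated column entry, hence zero, so contributes nothing. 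The $\det$-twist bookkeeping that appeared for $\nabla^\lambda$ in \Cref{thm:comp_partition_iso} does not intervene here because $\Wedge^{\lambda'} V$ and $(\Wedge^{\lambda'} V^\dual)^\dual$ are genuinely isomorphic (no twist), so the argument is a clean dualisation.
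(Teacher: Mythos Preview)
Your dualisation of \Cref{prop:nablaPresentation} to realise $\Delta^\lambda V$ as the annihilator of $\GR^\lambda(V^\dual)$ inside $\Wedge^{\lambda'} V$ is exactly the paper's opening move, and your closing paragraph on defect sets is also the same. The two proofs diverge only at the key step of showing $\act{\tmax}$ lies in the image.

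The paper does \emph{not} verify directly that $\act{\tmax}$ annihilates all Garnir relations. Instead it observes that the highest weight vector $\polyt(\tmax^\dual)^\dual$ of $\Delta^\lambda V$ has image under $e^\circ$ satisfying $e^\circ(\polyt(\tmax^\dual)^\dual)(\act{\tmax^\dual}) = \polyt(\tmax^\dual)^\dual(\polyt(\tmax^\dual)) = 1$, so the image is nonzero; since $e^\circ$ preserves weights and the highest weight space of $\Wedge^{\lambda'} V$ is one-dimensional, the image must be a nonzero scalar multiple of $\act{\tmax}$. This is a two-line computation.

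Your direct route is also valid, but your sketch of it is garbled. The sentence ``every Garnir relation is a linear combination of column tabloids $\act{u}$ for tableaux $u$ with the same multiset of column sets as $\tmax$'' is false as written (a Garnir relation is indexed by an arbitrary column-standard $t$), and ``moving entries between columns of $\tmax$ always creates a repeat'' is not literally true and in any case does not by itself show the \emph{coefficient} of $\act{\tmax}$ vanishes. A correct version is: if $\act{\tmax^\dual}$ occurs in $R_{(t,A,B)}$ then some $t\cdot\sigma$ has the column sets of $\tmax$; since $\sigma$ fixes all columns except $j$ and $k$, and the multiset $\Col_j(\tmax)\sqcup\Col_k(\tmax)$ contains each of $\ell-\lambda'_k+1,\ldots,\ell$ twice, column-standardness of $t$ forces $\Col_k(t)=\Col_k(\tmax)$ and hence $t=\tmax$. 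But then $\tmax(B)\subseteq\Col_k(\tmax)\subseteq\Col_j(\tmax)$, and $|A|+|B|>\lambda'_j$ forces $\tmax(A)\cap\tmax(B)\neq\varnothing$, so $R_{(\tmax,A,B)}=0$ by \Cref{lemma:nonempty_intersection_Garnir_relation}. This completes your approach; it is a little longer than the paper's but has the virtue of being independent of the weight-space structure.
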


\begin{proof}
Applying contravariant duality to the \(KG\)-surjection \(e\) from \Cref{prop:nablaPresentation} (with \(V^\condual\) in place of \(V\)), we get a \(KG\)-injection
\(e^\circ \colon \Delta^{\lambda} V \rightarrow (\Wedge^{\lambda'}\! V^\condual)^\condual\).
By the comments after \Cref{prop:SchurContravariantDual}, the codomain is isomorphic to \(\Wedge^{\lambda'} V\).
It remains to show that the image of this injection contains the highest weight vector.

It is clear that \(\act{\tmax}\) and \(\polyt(\tmax^\dual)^\dual\) are unique highest weight vectors of \(\Wedge^{\lambda'}\! V\) and \(\Delta^\lambda V\) respectively, where \(\tmax^\dual\) indicates the tableau \(\tmax\) but with entries understood to correspond to the dual basis of \(V^\condual\).
Since these highest weight vectors are of equal weight, it suffices to show that the image \(e^\condual( \polyt(\tmax^\dual)^\dual )\) is non-zero in \((\Wedge^{\lambda'} V^\condual)^\condual\).
Indeed, evaluating at \(\act{\tmax^\dual}\), we see \(
    e^\condual( \polyt(\tmax^\dual )^\dual )(\act{\tmax^\dual}) = \polyt(\tmax^\dual)^\dual(\polyt(\tmax^\dual)) = 1
\) and thus \(e^\circ( \polyt(\tmax^\dual)^\dual) \neq 0\).
\end{proof}
\begin{lemma}
\label{lemma:defect_set_for_Delta_lowersym}
Let \(a, b, \ell \in \N\).
Suppose that \(\abs{K} > 1+ 2(a+b+1)\ell - b(b+1)\). 
Then \(1 \in \defect(\Delta^{(a+1,1^b)} \Sym_l E)\).
\end{lemma}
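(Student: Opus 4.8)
The plan is to transfer the problem to the partition-labelled exterior power via \Cref{lemma:Delta_submodule_of_exterior_power} and there exhibit a single non-vanishing weight vector. Since $(a+1,1^b)' = (b+1,1^a)$, applying \Cref{lemma:Delta_submodule_of_exterior_power} with $V = \Sym_\ell E$ and $\lambda = (a+1,1^b)$ shows that, under the stated hypothesis on $\abs{K}$, the defect set $\defect(\Delta^{(a+1,1^b)}\Sym_\ell E)$ is defined and equals $\defect\bigl(\Wedge^{b+1}\Sym_\ell E \tensor (\Sym_\ell E)^{\tensor a}\bigr)$, a module with canonical basis the column tabloids $\act{t}$ for column standard $(a+1,1^b)$-tableaux $t$ with entries in $\set{0, \dots, \ell}$, where $i$ labels $v_i = (X^{\tensor i}\tensor Y^{\tensor \ell-i})_\sym$ as in \Cref{lemma:defect_set_for_nabla_lowersym}. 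Its unique highest weight vector is $\act{\tmax}$, where $\tmax$ has first column $\ell-b, \ell-b+1, \dots, \ell$ read top to bottom and every remaining entry equal to $\ell$; write $m$ for its weight. By \Cref{lemma:necessary_and_sufficient_condition_for_W_weights}, it then suffices to find $\gamma \in K$ for which $M_\gamma \act{\tmax}$ has non-zero component in the weight space of weight $m-2$.

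The second step is the computation of $M_\gamma \act{\tmax}$ by the multilinear expansion, acting on each box via \Cref{lemma:Maction}\ref{item:Maction_on_lower_sym}; in particular $M_\gamma v_\ell = \sum_{k=0}^\ell \gamma^{\ell-k} v_k = v_\ell + \gamma v_{\ell-1} + \cdots$. A term of weight $m-2$ occurs exactly by picking, in one box, a basis vector of index one less than the box entry, and the matching basis vector in every other box. Consider the term obtained by picking $\gamma v_{\ell-1}$ from the box in position $(1,2)$ (which exists because $a \geq 1$) and $v_{(\text{entry})}$ everywhere else: it equals $\gamma\act{s}$, where $s$ is the column standard $(a+1,1^b)$-tableau agreeing with $\tmax$ except that the entry $\ell$ in box $(1,2)$ is replaced by $\ell-1$. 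Since $s$ is column standard, $\act{s}$ is a basis vector; it has weight $m-2$, and no other weight-$(m-2)$ column tabloid arising in the expansion equals it: reducing an entry in the first column either creates a repeated entry in that column, so the tabloid is zero, or (in the top box) alters the first column's entry set, while reducing an entry of the top row in a column other than the second places $v_{\ell-1}$ in a different tensor slot. Hence $\act{s}$ occurs in $M_\gamma\act{\tmax}$ with coefficient exactly $\gamma$ and no cancellation. Taking $\gamma = 1$, the component of $M_1\act{\tmax}$ in the weight-$(m-2)$ space is non-zero, so $(B\act{\tmax})_{m-2} \neq 0$ and therefore $1 \in \defect\bigl(\Wedge^{b+1}\Sym_\ell E \tensor (\Sym_\ell E)^{\tensor a}\bigr) = \defect(\Delta^{(a+1,1^b)}\Sym_\ell E)$, as required.

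The argument is routine once \Cref{lemma:Delta_submodule_of_exterior_power} is available; the only substantive point is the choice of target tableau. In \Cref{lemma:defect_set_for_nabla_lowersym} one works inside $\nabla^{(a+1,1^b)}\Sym_\ell E$ and is essentially forced to reduce the top-left entry, which contributes the coefficient $b+1$ and hence the hypothesis $b \not\equiv -1 \bmod p$; here, because column tabloids in the exterior power are genuinely linearly independent (no Garnir relations intervene) and $a \geq 1$ supplies a spare tensor factor carrying $v_\ell$, one may instead reduce a top-row entry outside the first column, which contributes coefficient $1$ and so imposes no condition on $b$. The main thing to verify with care is precisely that no other summand of the expansion collapses onto $\act{s}$, which, as indicated above, follows at once by comparing first-column entry sets and the positions of tensor slots.
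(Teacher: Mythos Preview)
Your proof is correct and follows exactly the same approach as the paper: reduce via \Cref{lemma:Delta_submodule_of_exterior_power} to the exterior power, take the same target tableau $s$ obtained from $\tmax$ by decreasing the entry in box $(1,2)$ by $1$, and observe that the coefficient of $\act{s}$ in $M_\gamma\act{\tmax}$ is $\gamma$. Your additional verification that no other summand contributes to $\act{s}$, and your closing remarks comparing with \Cref{lemma:defect_set_for_nabla_lowersym}, are more explicit than the paper's terse version but are saying the same thing.
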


\begin{proof}
By \Cref{lemma:Delta_submodule_of_exterior_power}, it is equivalent to show
that \(1 \in \defect(\Wedge^{b+1} \Sym_l\hskip-0.5pt E  \tensor (\Sym_l E)^{\tensor a})\).
A unique highest weight vector of \(\Wedge^{b+1} \Sym_l E \hskip0.5pt\tensor\hskip0.5pt (\Sym_l\hskip-1pt E)^{\tensor a}\) is the column tabloid for the tableau \(\tmax\) from \Cref{lemma:defect_set_for_nabla_lowersym}; let \(s\) be the column standard tableau obtained from \(\tmax\) by reducing the entry in box \((1,2)\) by~\(1\).
Then
\begin{align*}
\act{\tmax} &=
    \bigl( (X^{\otimes \ell-b} \crampedtensor Y^{\otimes b})_\sym 
    \wedge
        \cdots
    \wedge
        X^{\otimes \ell}
    \bigr)
    \tensor
        \bigl( X^{\otimes \ell} \bigr) ^{\tensor a}, \\
\act{s} &=
    \bigl( (X^{\otimes \ell-b} \crampedtensor Y^{\otimes b})_\sym 
    \wedge
        \cdots
    \wedge
        X^{\otimes \ell}
    \bigr)
    \tensor
        (X^{\otimes \ell-1} \crampedtensor Y)_\sym
    \tensor
        \bigl( X^{\otimes \ell} \bigr) ^{\tensor a-1}.
\end{align*}
The coefficient of \(\act{s}\) in \(M_\gamma \act{\tmax}\) is the coefficient of \((X^{\otimes \ell-1} \tensor Y)_\sym\) in \(M_\gamma X^{\tensor l}\), which is \(\gamma\).
Thus \(\act{s}\) is in the $K \Borel$-submodule generated by the highest weight vector, giving the required defect.
\end{proof}

\begin{lemma}
\label{lemma:defect_set_for_Delta_uppersym}
Let \(\alpha, \beta, \epsilon \in \N\) with \(\alpha \neq \beta\) and \(\alpha, \beta < \epsilon\).
Suppose that \(\abs{K} > 1 + 2(p^\epsilon + p^\betass)(p^\alpha + p^\betass + 1) - p^\betass(p^\betass + 1)\).
Then
\begin{thmlist}
\item\label{item:Delta_uppersym_defects}
    \(p^\betass \in \defect(\Delta^{(p^\alpha+1,1^{p^\betaxss})} \Sym^{p^\epsilon+p^\betaxss} E)\);
\item\label{item:Delta_uppersym_non-defects}
    \(1\), \(p^{\alpha} \not\in \defect(\Delta^{(p^\alpha+1,1^{p^\betaxss})} \Sym^{p^\epsilon+p^\betaxss} E)\).
\end{thmlist}
\end{lemma}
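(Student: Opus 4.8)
The plan is to follow the template of the proofs of \Cref{lemma:defect_set_for_Delta_lowersym} and \Cref{lemma:defect_set_for_nabla_uppersym}. Write $V = \Sym^{p^\epsilon+p^\beta}E$ and $\lambda = (p^\alpha+1,1^{p^\beta})$, so that $\lambda' = (p^\beta+1,1^{p^\alpha})$ and $\Wedge^{\lambda'}V = \Wedge^{p^\beta+1}V \otimes V^{\otimes p^\alpha}$. By \Cref{lemma:Delta_submodule_of_exterior_power} both parts reduce to a computation inside this partition-labelled exterior power, whose unique highest weight vector is the column tabloid $\act{\tmax}$ for the tableau $\tmax$ appearing in \Cref{lemma:defect_set_for_nabla_uppersym}; the lower bound on $\abs{K}$ is precisely what makes this highest weight vector unique and what validates the tensor bounds below. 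Throughout write $w_i = X^iY^{p^\epsilon+p^\beta-i}$ for the canonical basis of $V$, so the entries of $\tmax$ are $w_{p^\epsilon}, w_{p^\epsilon+1}, \ldots, w_{p^\epsilon+p^\beta}$ down the first column and $w_{p^\epsilon+p^\beta}$ in each of the $p^\alpha$ singleton columns, whence $\act{\tmax} = \bigl(w_{p^\epsilon}\wedge\cdots\wedge w_{p^\epsilon+p^\beta}\bigr)\otimes w_{p^\epsilon+p^\beta}^{\otimes p^\alpha}$.

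For part~\ref{item:Delta_uppersym_defects} I would exhibit a single nonzero coefficient in $M_\gamma\act{\tmax}$. Let $s$ be the column standard $\lambda$-tableau obtained from $\tmax$ by replacing the entry $w_{p^\epsilon+p^\beta}$ in box $(1,2)$ — the top of one of the singleton columns — by $w_{p^\epsilon}$; this lowers the weight by $2p^\beta$ and requires no column sorting. Expanding $M_\gamma\act{\tmax} = \bigl(\bigwedge_{i=0}^{p^\beta} M_\gamma w_{p^\epsilon+i}\bigr)\otimes(M_\gamma w_{p^\epsilon+p^\beta})^{\otimes p^\alpha}$ via \Cref{lemma:Maction}\ref{item:Maction_on_upper_sym}, the triangularity $M_\gamma w_i \in w_i + \langle w_0,\ldots,w_{i-1}\rangle$ forces the wedge factor to produce $w_{p^\epsilon}\wedge\cdots\wedge w_{p^\epsilon+p^\beta}$ only through the leading term of each $M_\gamma w_{p^\epsilon+i}$ (coefficient $1$), the modified tail factor contributes $w_{p^\epsilon}$ with coefficient $\binom{p^\epsilon+p^\beta}{p^\epsilon}\gamma^{p^\beta}$, and each remaining tail factor contributes $w_{p^\epsilon+p^\beta}$ with coefficient $1$. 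Since $\beta < \epsilon$, Lucas's Theorem gives $\binom{p^\epsilon+p^\beta}{p^\epsilon}\not\equiv 0 \bmod p$, so $\act{s}$ occurs in $M_\gamma\act{\tmax}$ with nonzero coefficient for any $\gamma\neq 0$; by \Cref{lemma:necessary_and_sufficient_condition_for_W_weights} this gives $p^\beta\in\defect$.

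For part~\ref{item:Delta_uppersym_non-defects} I would bound the defect set from above. Using the Wronskian isomorphism (\Cref{thm:Wronskian}, the determinant twist being trivial over $\SL_2(K)$) to identify $\Wedge^{p^\beta+1}V \cong \Sym_{p^\beta+1}\Sym^{p^\epsilon}E$, and noting that the tail's highest weight vector $w_{p^\epsilon+p^\beta}^{\otimes p^\alpha}$ is $S_{p^\alpha}$-invariant so that the $K\Borel$-submodule generated by $\act{\tmax}$ lies in $\Sym_{p^\beta+1}\Sym^{p^\epsilon}E \otimes \Sym_{p^\alpha}\Sym^{p^\epsilon+p^\beta}E$, \Cref{item:defect_sets_for_tensors} gives
\[
\defect\bigl(\Delta^{(p^\alpha+1,1^{p^\beta})}\Sym^{p^\epsilon+p^\beta}E\bigr) \subseteq \defect\bigl(\Sym_{p^\beta+1}\Sym^{p^\epsilon}E\bigr) + \defect\bigl(\Sym_{p^\alpha}\Sym^{p^\epsilon+p^\beta}E\bigr).
\]
By \Cref{lemma:defectSetSymSym}, and since the carry-free summands of $p^\epsilon$ are $0$ and $p^\epsilon$ while those of $p^\epsilon+p^\beta$ are $0,p^\beta,p^\epsilon,p^\epsilon+p^\beta$, the first set is $\setbuild{cp^\epsilon}{0\leq c\leq p^\beta+1}$ and every element of the second has the form $up^\beta+vp^\epsilon$ with $u,v\in\N_0$. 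Hence every element of the sumset is a nonnegative integer combination of $p^\beta$ and $p^\epsilon$; as $p^\beta,p^\epsilon\geq p\geq 2$ and $p^\alpha$ is smaller than both, neither $1$ nor $p^\alpha$ is of this form, so $1,p^\alpha\notin\defect$.

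The step I expect to be the main obstacle is the bookkeeping in part~\ref{item:Delta_uppersym_defects}: verifying that triangularity genuinely forces the identity choice in the wedge factor, so that there is no cancellation and the coefficient of $\act{s}$ is exactly $\binom{p^\epsilon+p^\beta}{p^\epsilon}\gamma^{p^\beta}$. In part~\ref{item:Delta_uppersym_non-defects} the analogous care lies in checking that the submodule generated by the highest weight vector really does land in the symmetrised tensor factor, that the field-size hypothesis survives the two applications of the tensor bound, and — for the final arithmetic exclusion of $p^\alpha$ — that $p^\alpha$ is smaller than both $p^\beta$ and $p^\epsilon$.
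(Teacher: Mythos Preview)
Your approach is essentially the paper's. For part~\ref{item:Delta_uppersym_defects} both proofs reduce via \Cref{lemma:Delta_submodule_of_exterior_power} to $\Wedge^{p^\beta+1}V\otimes V^{\otimes p^\alpha}$, take the same tableau $s$ (lower the entry in box $(1,2)$ from $p^\epsilon+p^\beta$ to $p^\epsilon$), and read off the coefficient $\binom{p^\epsilon+p^\beta}{p^\beta}\gamma^{p^\beta}$; your triangularity argument for the wedge factor just makes explicit what the paper leaves implicit. For part~\ref{item:Delta_uppersym_non-defects} both proofs apply the Wronskian isomorphism to the wedge factor and then \Cref{lemma:defect_sets_for_images_and_tensors}\ref{item:defect_sets_for_tensors}. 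The only difference is cosmetic: you first observe that the $K\Borel$-orbit of $\act{\tmax}$ lands in $\Wedge^{p^\beta+1}V\otimes\Sym_{p^\alpha}V$ and split once, whereas the paper splits off the $p^\alpha$ tail factors one at a time. The resulting upper bounds coincide, both contained in $\setbuild{up^\beta+wp^\epsilon}{u,w\in\N_0,\ 0\leq u\leq p^\alpha}$.

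The concern you flag at the end is genuine, and it is not merely bookkeeping. Excluding $p^\alpha$ from this sumset needs $p^\alpha$ not to be a multiple of $p^\beta$, which uses $\alpha<\beta$, not just $\alpha\neq\beta$ as the lemma hypothesises: when $\alpha>\beta$ one has $p^\alpha=p^{\alpha-\beta}\cdot p^\beta$ with $p^{\alpha-\beta}\leq p^\alpha$, and the sumset argument no longer rules it out. The paper makes exactly the same leap (``using that $\alpha<\epsilon$ and $\alpha\neq\beta$, it is clear that $1$ and $p^\alpha$ are not in this set''), so this is not a divergence from the paper but a shared limitation of the method: as written, both arguments establish part~\ref{item:Delta_uppersym_non-defects} only under the stronger hypothesis $\alpha<\beta$. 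This is all that is actually used in the proof of \Cref{thm:hookObstructions} for the third displayed line there; for the fourth line the lemma is applied with the roles of $\alpha$ and $\beta$ swapped, and the sumset bound is then genuinely too coarse.
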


\begin{proof}
As in the proof of \Cref{lemma:defect_set_for_Delta_lowersym}, we use \Cref{lemma:Delta_submodule_of_exterior_power} to work in \(\Wedge^{p^\betaxss+1} \Sym^{p^\epsilon + p^\betaxss} E \tensor (\Sym^{p^\epsilon + p^\betaxss} E)^{\tensor p^\alpha}\) rather than \(\Delta^{(p^\alpha + 1, 1^{p^\betaxss})} \Sym^{p^\epsilon + p^\betaxss} E\).

The highest weight vector of \(\Wedge^{p^\betaxss+1} \Sym^{p^\epsilon + p^\betaxss} E \tensor (\Sym^{p^\epsilon + p^\betaxss} E)^{\tensor p^\alpha}\) is the column tabloid for the tableau \(\tmax\) from \Cref{lemma:defect_set_for_nabla_uppersym}; let \(s\) be the column standard tableau obtained from \(\tmax\) by reducing the entry in box \((1,2)\) by~\(p^\beta\).
Then
\begin{align*}
\act{\tmax} &=
    \bigl( X^{p^\epsilon} Y^{p^\betaxss} 
    \wedge
        \cdots
    \wedge
        X^{ p^\epsilon + p^\betaxss}
    \bigr)
    \tensor
        \bigl( X^{ p^\epsilon + p^\betaxss} \bigr)^{\tensor p^\alpha}, \\
\act{s} &=
    \bigl( X^{p^\epsilon} Y^{ p^\betaxss}
    \wedge
        \cdots
    \wedge
        X^{p^\epsilon + p^\betass}
    \bigr)
    \tensor
        X^{p^\epsilon} Y^{p^\betaxss}
    \tensor
        \bigl( X^{ p^\epsilon + p^\betaxss} \bigr)^{\tensor p^\alpha - 1}.
\end{align*}
The coefficient of \(\act{s}\) in \(M_\gamma \act{\tmax}\) is the coefficient of \(X^{p^\epsilon} Y^{p^\betaxss}\) in \(M_\gamma X^{p^\epsilon + p^\betaxss}\), which is \(\gamma^{p^\beta} \binom{p^\epsilon+p^\betaxss}{p^\betaxss} \neq 0\).
Thus \(\act{s}\) is in the $K \Borel$-submodule generated by the highest weight vector, proving \ref{item:Delta_uppersym_defects}.

For \ref{item:Delta_uppersym_non-defects}, we use
\Cref{lemma:defect_sets_for_images_and_tensors}\ref{item:defect_sets_for_tensors} and the Wronskian isomorphism (\Cref{thm:Wronskian}) to find that
\begin{align*}
\defect( &\Wedge^{p^\betass+1} \Sym^{p^\epsilon + p^\betass} E \tensor (\Sym^{p^\epsilon + p^\betass} E)^{\tensor p^\alpha}) \\
    &\qquad \subseteq \defect(\Sym_{p^\betaxss+1} \Sym^{p^\epsilon} \!E)
        + \defect(\Sym^{p^\betaxss+p^\epsilon} \!E)
        + \cdots
        + \defect(\Sym^{p^\betaxss+p^\epsilon} \!E)
\end{align*}
where there are \(p^\alpha\) summands of \(\defect(\Sym^{p^\betaxss+p^\epsilon} \!E)\).
From \Cref{lemma:defectSetSym,lemma:defectSetSymSym},
\begin{align*}
\defect(\Sym^{p^\betaxss+p^\epsilon} E)
    &= \set{0, p^\betass, p^\epsilon, p^\betass + p^\epsilon}, \\
\defect(\Sym_{p^\betaxss+1} \Sym^{p^\epsilon} E)
    &= \setbuild{cp^\epsilon}{0 \leq c \leq p^\betass +1}.
\end{align*}
Using that \(\alpha < \epsilon\) and \(\alpha \neq \beta\), it is clear that \(1\) and \(p^\alpha\) are not in this set.
\end{proof}

\begin{remark} 
\label{remark:Delta}
\leavevmode
\begin{rmklist}

    \item 
It follows from \cite[5.3(b)]{GreenGLn} and 
\Cref{lemma:Delta_submodule_of_exterior_power} that when \(K\) is infinite,
\(\Delta^\lambda V\) is isomorphic to the submodule of \(\bigwedge^{\lambda'}\! V\) generated by its unique highest weight vector.
This explicit construction of \(\Delta^\lambda V\) is in
some cases more convenient than the presentation by relations given in \cite[Ch.~5]{GreenGLn}.
Furthermore, when~\(K\) is infinite,
by 
 \cite[Proposition 31.2]{HumphreysAlgGps}, 
 the submodule generated by the highest weight vector is the same whether we act by \(\Borel\) or all of \(\SL_2(K)\); thus in this case we have that every weight of \(\Delta^\lambda V\) contributes to the defect set. That is, writing \(m\) for the highest weight, we have \(\defect(\Delta^\lambda V) = \setbuild{d \in \N_0}{(\Delta^\lambda V)_{m-2d} \neq 0}\). This can be used to give  alternative proofs of the two previous lemmas, when~$K$ is infinite.
    \item
Using \Cref{lemma:defect_sets_for_images_and_tensors} and the result 
from \Cref{lemma:Delta_submodule_of_exterior_power} that  \(\defect(\Delta^\lambda V) = \defect(\Wedge^{\lambda'} V)\), we find that \(\defect(\Delta^\lambda V) \subseteq \sum_{j=1}^{\lambda_1} \defect(\Wedge^{\lambda'_j} V)\).
When \(K\) is algebraically closed, it can be shown that this is an equality: indeed, under the conditions of \Cref{lemma:defect_sets_for_images_and_tensors}, there is equality \(\defect(V \tensor W) = \defect(V) + \defect(W)\) because any two matrices $M_\gamma$ and $M_\delta$ are conjugate in \(\SL_2(K)\) by diagonal matrices,
and so, up to a scalar, $M_\gamma v \otimes M_\delta w $ is equal to $M_\kappa (v \otimes w)$ for some suitable $\kappa \in K$.
\end{rmklist}
\end{remark}

\subsection{Proof of \Cref{thm:hookObstructions}}
\label{subsection:hookObstructions_proof}

We are now ready to prove the main theorem of this section.

\hookObstructions*

\begin{proof}
From \Cref{lemma:defect_set_for_nabla_uppersym,lemma:defect_set_for_Delta_uppersym} we have
\begin{align*}
    1,\, p^\alpha,\, p^\beta,\, p^{\alpha + \epsilon} - p^\epsilon &\not\in 
        \defect( \nabla^{(p^{\alpha}+1, 1^{p^\betaxss})} \Sym^{p^\epsilon + p^\betaxss} )
            \ni p^{\betass + \epsilon} - p^\epsilon \\
    1,\, p^\alpha,\, p^\beta,\, p^{\betass + \epsilon} - p^\epsilon &\not\in
        \defect( \nabla^{(p^{\betaxss}+1, 1^{p^\alpha})} \Sym^{p^\epsilon + p^\alpha} )
            \ni p^{\alpha + \epsilon} - p^\epsilon\\
    1,\, p^\alpha &\not\in
        \defect( \Delta^{(p^{\alpha}+1, 1^{p^\betaxss})} \Sym^{p^\epsilon + p^\betaxss} )
            \ni p^\beta \\
    1,\, p^\beta &\not\in
        \defect( \Delta^{(p^{\betaxss}+1, 1^{p^\alpha})} \Sym^{p^\epsilon + p^\alpha} )
            \ni p^\alpha
\end{align*}
and from \Cref{lemma:defect_set_for_nabla_lowersym,lemma:defect_set_for_Delta_lowersym} we have that \(1\) lies in each of the defect sets where \(\Sym^\blank\) is replaced with \(\Sym_\blank\).
Thus it is clear that the four modules whose defect sets are displayed above are pairwise non-isomorphic, and that none is isomorphic to any of the four modules obtained by replacing \(\Sym^\blank\) with \(\Sym_\blank\).
Finally, by applying contravariant duality to
an isomorphism between any two of the latter four modules we obtain an isomorphism between two modules defined using \(\Sym^\blank\).
Therefore no two of the latter four modules are isomorphic.
\end{proof}

\section*{Acknowledgements}
We thank Abdelmalek Abdesselam for the reference to \cite{AproduEtAl}.
We are grateful to an anonymous referee for their thorough reading of our manuscript and their numerous helpful comments.


\begin{thebibliography}{dBPW18}

\bibitem[AC07]{AC}
Abdelmalek Abdesselam and Jaydeep Chipalkatti.
\newblock On the {W}ronskian combinants of binary forms.
\newblock {\em J. Pure Appl. Algebra}, 210(1):43--61, 2007.

\bibitem[AP19]{AproduEtAl}
Marian Aprodu, Gavril Farkas, \c{S}tefan Papadima, Claudiu Raicu, and Jerzy Weyman.
\newblock Koszul modules and {G}reen's conjecture.
\newblock {\em Invent. Math.}, 218:657--720, 2019.


\bibitem[CP16]{CaglieroPenazzi}
Leandro Cagliero and Daniel Penazzi.
\newblock A new generalization of {H}ermite's reciprocity law.
\newblock {\em J. Algebraic Combin.}, 43(2):399--416, 2016.

\bibitem[CIM17]{CheungIkenmeyerMkrtchyan}
Man-Wai Cheung, Christian Ikenmeyer, and Sevak Mkrtchyan.
\newblock Symmetrizing tableaux and the 5th case of the {F}oulkes conjecture. 
\newblock {\em J. Symbolic Comput.}, 80(3):833--843, 2017. 


\bibitem[dBPW21]{deBoeckPagetWildon}
Melanie de~Boeck, Rowena Paget, and Mark Wildon.
\newblock Plethysms of symmetric functions and highest weight representations.
\newblock {\em Trans.~Amer.~Math.~Soc}, 374(11):8013--8043, 2021.

\bibitem[EGS08]{GreenGLn}
K.~Erdmann, J.A. Green, and M.~Schocker.
\newblock {\em Polynomial Representations of \(\operatorname{GL}_n\): with an
  Appendix on Schensted Correspondence and Littelmann Paths}, volume 830 of
  {\em Lecture Notes in Mathematics}.
\newblock Springer, 2008.

\bibitem[EH02]{ErdmannHenke}
K.~Erdmann and A.~Henke.
\newblock On {R}ingel duality for {S}chur algebras.
\newblock {\em Math. Proc. Cambridge Philos. Soc.}, 132(1):97--116, 2002.

\bibitem[FH91]{FultonHarrisReps}
William Fulton and Joe Harris.
\newblock {\em Representation Theory, A First Course}, volume 129 of {\em
  Graduate Texts in Mathematics (Readings in Mathematics)}.
\newblock Springer, 1991.

\bibitem[Ful97]{FultonYTs}
William Fulton.
\newblock {\em Young tableaux}, volume~35 of {\em London Mathematical Society
  Student Texts}.
\newblock Cambridge University Press, 1997.

\bibitem[Gia15]{GiannelliDecomp}
Eugenio Giannelli.
\newblock On permutation modules and decomposition numbers of the symmetric group. 
\newblock {\em J. Alg.} 422:427--449, 2015.


\bibitem[Hum98]{HumphreysAlgGps}
James~E. Humphreys.
\newblock {\em Linear Algebraic Groups}, volume~21 of {\em Graduate Texts in
  Mathematics}.
\newblock Springer, 1975.

\bibitem[Jam78]{James}
G.~D. James.
\newblock {\em The Representation Theory of the Symmetric Groups}, volume 682
  of {\em Lecture Notes in Mathematics}.
\newblock Springer-Verlag, 1978.

\bibitem[Kin85]{KingSU2Plethysms}
Ronald~C. King.
\newblock Young tableaux, {S}chur functions and {\({\rm SU}(2)\)} plethysms.
\newblock {\em J. Phys. A}, 18(13):2429--2440, 1985.

\bibitem[Kou90a]{KouwenhovenI}
Frank~M. Kouwenhoven.
\newblock The {\(\lambda\)}-structure of the {G}reen ring of {\({\rm GL}(2,{\bf
  F}_p)\)} in characteristic {\(p\)}. {I}.
\newblock {\em Comm. Algebra}, 18(6):1673--1700, 1990.

\bibitem[Kou90b]{KouwenhovenII}
Frank~M. Kouwenhoven.
\newblock The {\(\lambda\)}-structure of the {G}reen ring of {\({\rm GL}(2,{\bf
  F}_p)\)} in characteristic {\(p\)}. {II}.
\newblock {\em Comm. Algebra}, 18(6):1645--1671, 1990.

\bibitem[McD21]{McDowellTensorProducts}
Eoghan McDowell.
\newblock A random walk on the indecomposable summands of tensor products of modular representations of \(\mathrm{SL}_2(\mathbb{F}_p)\).
\newblock To appear in {\em Algebr Represent. Theor.}

\bibitem[PW21]{PagetWildonSL2}
Rowena Paget and Mark Wildon.
\newblock Plethysms of symmetric functions and representations of
  \(\mathrm{SL}_2(\mathbf{C})\).
\newblock {\em Algebr. Comb.}, 4(1):27--68, 2021.

\end{thebibliography}
\end{document}